\documentclass[ejs,preprint]{imsart}

\RequirePackage{amsthm,amsmath}
\RequirePackage[numbers]{natbib}
\RequirePackage[colorlinks,citecolor=blue,urlcolor=blue]{hyperref}
\RequirePackage{graphicx}
\RequirePackage{lscape}
\RequirePackage{scalerel}

\RequirePackage{amssymb}
\RequirePackage{enumerate}
\RequirePackage{enumitem}

\RequirePackage{multirow}
\RequirePackage{makecell}
\RequirePackage{booktabs}
\RequirePackage{float}

\RequirePackage{epstopdf}
\RequirePackage{graphicx}
\RequirePackage{overpic}
\RequirePackage{mathtools}

\RequirePackage{xcolor}

\newcommand{\C}{\mathbb{C}} 
\newcommand{\R}{\mathbb{R}}

\renewcommand{\hat}[1]{\widehat{#1}}

\renewcommand{\P}{\mathbb{P}}
\newcommand{\E}{\mathbb{E}}

\newcommand{\e}{\epsilon}

\newcommand{\cov}{\text{cov}}
\newcommand{\tr}{\textup{tr}}

\newcommand{\ttop}{^{\top}}

\newcommand{\ts}{\textstyle}

\newcommand{\var}{\operatorname{var}}
\newcommand{\x}{\mathbf{x}}
\renewcommand{\u}{\mathbf{u}}

\pubyear{2020}
\volume{0}
\issue{0}
\firstpage{1}
\lastpage{8}

\startlocaldefs
\numberwithin{equation}{section}
\theoremstyle{plain}

\endlocaldefs

\newtheorem{lemma}{Lemma}
\newtheorem{theorem}{Theorem}
\newtheorem{assumption}{Assumption}
\newtheorem{proposition}{Proposition}
\newtheorem{alg}{Algorithm}

\begin{document}

\begin{frontmatter}
\title{A Bootstrap Method for Spectral Statistics in High-Dimensional\\ Elliptical Models}

\runtitle{Bootstrap in High-Dimensional Elliptical Models}

\begin{aug}
\author{\fnms{Siyao} \snm{Wang}\ead[label=e1]{syywang@ucdavis.edu}}
\and
\author{\fnms{Miles E.} \snm{Lopes}\thanksref{t1}\ead[label=e2]{melopes@ucdavis.edu}}

\address{Department of Statistics, University of California, Davis\\
\printead{e1,e2}}



\thankstext{t1}{Supported in part by NSF Grant DMS 1915786}
\runauthor{S.~Wang \& M.~E.~Lopes}

\end{aug}

\begin{abstract}
    Although there is an extensive literature on the eigenvalues of high-dimensional sample covariance matrices, much of it is specialized to independent components (IC) models---in which observations are represented as linear transformations of random vectors with independent entries. By contrast, less is known in the context of elliptical models, which violate the independence structure of IC models and exhibit quite different statistical phenomena. In particular, very little is known about the scope of bootstrap methods for doing inference with spectral statistics in high-dimensional elliptical models. To fill this gap, we show how a bootstrap approach developed previously for IC models can be extended to handle the different properties of elliptical models. 
    Within this setting, our main theoretical result guarantees that the proposed method consistently approximates the distributions of linear spectral statistics,
    which play a fundamental role in multivariate analysis.
    We also provide empirical results showing that the proposed method performs well for a variety of nonlinear spectral statistics.
\end{abstract}

\begin{keyword}[class=MSC]
\kwd[Primary ]{	62H99 }
\kwd{62F40}
\kwd[; secondary ]{62H25}
\end{keyword}

\begin{keyword}
\kwd{high-dimensional statistics}
\kwd{bootstrap}
\kwd{covariance matrices}
\kwd{elliptical distributions}
\end{keyword}
\end{frontmatter}

\section{Introduction}

The analysis of spectral statistics of sample covariance matrices is a major research area within multivariate analysis, random matrix theory, high-dimensional statistics, and related fields~\citep{Bai:Silverstein:2010,yao2015sample,vershynin2018high,potters2020first}. If $\x_1,\dots,\x_n$ are centered i.i.d.~observations in $\R^p$ with a sample covariance matrix denoted by
\begin{equation}
    \hat\Sigma_n = \frac{1}{n}\sum_{i=1}^n \x_i\x_i\ttop,
\end{equation}
then we say that a random variable $T_n$ is a spectral statistic if it has the form $T_n=\psi(\lambda_1(\hat\Sigma_n),\dots,\lambda_p(\hat\Sigma_n))$, where
$\lambda_1(\hat\Sigma_n)\geq\cdots\geq \lambda_p(\hat\Sigma_n)$ are the sorted eigenvalues of $\hat\Sigma_n$, and $\psi$ is a generic real-valued function. Over the past two decades, there has been a tremendous growth of interest in spectral statistics in high-dimensional settings where $p=p(n)$ grows so that $p/n$ converges to a positive constant as $n\to\infty$. Likewise, statistical tools for approximating the distributions of spectral statistics have been applied to high-dimensional data in a broad range of domains, such as electrical engineering, finance, and biology~\citep[][]{Patterson:2006,oxford:handbook,Debbah:Couillet2011,edelman2013random}.

Although the research on spectral statistics has dealt with many different statistical models, two of the most influential ones have been \emph{elliptical} and \emph{independent components} (IC) models. To be specific, we say that the random vector $\x_1$ follows an elliptical model if it can be represented in the form 
\begin{equation}\label{eqn:ellip}
\x_1=\xi_1 \Sigma_n^{1/2} \mathbf{u}_1
\end{equation}
where $\Sigma_n=\E(\x_1\x_1\ttop)$, and \smash{$(\u_1,\xi_1)\in \R^{p}\times [0,\infty)$} is a random vector such that $\u_1$ and $\xi_1$ are independent, with $\u_1$ being uniformly distributed on the unit sphere of $\R^p$. Alternatively, we say that $\x_1$ follows an IC model if 
\begin{equation}
\x_1= \Sigma_n^{1/2}\mathbf{z}_1,
\end{equation}
 where $\mathbf{z}_1\in\R^p$ is a random vector whose entries are centered and independent. 

At first sight, these two models may seem to be very similar, but this outward appearance conceals some crucial differences in modelling capabilities. In particular, it should be stressed that the entries of the random vector $\xi_1\mathbf{u}_1$ in an elliptical model are correlated, which contrasts with the independence of the entries of $\mathbf{z}_1$ in an IC model. Also, since the scalar random variable $\xi_1$ is shared across all entries of $\x_1$ in an elliptical model, this enhances the ability to capture scenarios where the magnitudes of all entries of $\x_1$ move in the same direction simultaneously. This is a key effect in some application domains, such as in finance, where the entries of $\x_1$ correspond to stock prices that can  fall in tandem during a sharp market downturn. Additional background on related merits of elliptical models can be found in~\citep{Karoui:2009,Embrechts:2011,gupta2013elliptically}. More generally, the multivariate analysis literature has placed a longstanding emphasis on the benefits of elliptical models in fitting various types of non-Gaussian data~\citep{fang2018symmetric,anderson1958introduction,hardle2019applied}.

However, looking beyond the points just mentioned,
IC models have played a more dominant role than elliptical models in the literature on spectral statistics in high dimensions.
Consequently, the established body of high-dimensional limit theory is much less complete for elliptical models.
Indeed, the challenge of extending results from IC models to elliptical ones has become a prominent topic of ongoing research, which has led to important advances in the limit theory for spectral statistics~\citep[e.g.][]{li2018structure,hu2019high,hu2019aos,yang2021testing,jun2022tracy,li2022eigenvalues,zhang2022clt}. As a matter of historical context, it also worth bearing in mind that for some spectral statistics, it took many years for such extensions to be established.

From the standpoint of statistical methodology, a corresponding set of gaps exists between elliptical and IC models. These gaps are especially apparent in the current state of bootstrap methods for high-dimensional data. In particular, it is known from~\citep{lopes2019bootstrapping} that a form of parametric bootstrapping can successfully approximate the distributions of spectral statistics in IC models, whereas very little is known for elliptical models. 
Accordingly, our primary goal in the current paper is to resolve this issue by developing a parametric bootstrap method that is both theoretically and empirically effective in high-dimensional elliptical models.

With regard to theory, we will focus on the class of \emph{linear spectral statistics}, which have the form
\begin{equation}\label{eqn:lssdef}
    T_n(f) = \frac{1}{p}\sum_{j=1}^p f(\lambda_j(\hat\Sigma_n)),
\end{equation}
for a suitable real-valued function $f$. Beginning with the pathbreaking works~\citep{jonsson1982some,Bai:Silverstein:2004} that established the earliest versions of the central limit theorem for linear spectral statistics in high dimensions, these statistics have been a perennial focus of research. Their importance is underscored by the fact that they appear frequently throughout multivariate analysis, with some of the most well-known examples being $\frac 1p\tr(\hat\Sigma_n)$, $\frac 1p\tr(\hat\Sigma_n^2)$, and $\frac 1p\log\det(\hat\Sigma_n)$, among various other classical statistics for testing hypotheses~\citep{yao2015sample}. 

Motivated by these considerations, our main theoretical result (Theorem~\ref{thm:main}) shows that the proposed bootstrap method consistently approximates the distributions of linear spectral statistics when the underlying data are elliptical and $p/n$ converges to a positive constant as $n\to\infty$. The proof substantially leverages recent progress on the central limit theorem for linear spectral statistics in elliptical models due to~\citep{hu2019aos}. Also, an intermediate step in the proof (Lemma~\ref{lem:LW}) shows that the well-known eigenvalue estimation method QuEST is consistent in elliptical models---which may be of independent interest, since it seems that QuEST's consistency has not previously been reported outside of IC models~\citep{ledoit2015spectrum}. Moreover, Section~\ref{sec:rank} develops an application of Theorem~\ref{thm:main} where we establish the asymptotic validity of inference procedures related to the \emph{stable rank} parameter  $r_n=\tr(\Sigma_n)^2/\tr(\Sigma_n^2)$ of the population covariance matrix $\Sigma_n$.

To address the empirical performance of the proposed method, Section~\ref{sec:numerical} presents numerical results for a wide variety of model settings and statistics. Most notably, these results show encouraging performance for \emph{both linear and nonlinear spectral statistics}. (We regard any function of $(\lambda_1(\hat\Sigma_n),\dots,\lambda_p(\hat\Sigma_n))$ that is not of the form~\eqref{eqn:lssdef} as a nonlinear spectral statistic.)
To put this point into perspective, it is important to highlight the fact that asymptotic formulas for the distributions of nonlinear spectral statistics are typically developed on a case-by-case basis, and are relatively scarce in comparison to those for linear spectral statistics. Even when such formulas are available, they may be very different from those for linear spectral statistics---as they may require the estimation of different model parameters, or the implementation of different algorithms for numerical evaluation. On the other hand, the bootstrap approach may be considered more user-friendly, since it can be applied to different types of spectral statistics in an automatic and unified way.
Similarly, the bootstrap approach can provide the user with the freedom to easily explore statistics that depend on several linear spectral statistics in a complicated manner (which would otherwise require intricate delta-method calculations), or statistics for which formulas may not be available at all. \\
\\
\noindent\textbf{Notation and terminology.} For a random object $Y$, the expression $\mathcal{L}(Y)$ denotes its distribution, and $\mathcal{L}(Y|X)$ denotes its conditional distribution given the observations $\x_1,\dots,\x_n$. Similarly, we use $|X$ when referring to probabilities, expectations, and variances that are conditional on the observations. The symbols $\xrightarrow{\P}$ and $\Rightarrow$ respectively denote convergence in probability and convergence in distribution. 
For a set $A\subset\R^k$ and a number $\delta>0$, the outer $\delta$-neighborhood of $A$ is defined as $A^{\delta}=\{\mathbf{a}'\in\R^k| \inf_{\mathbf{a}\in A}\|\mathbf{a}'-\mathbf{a}\|_2\leq \delta\}$, where $\|\cdot\|_2$ is the Euclidean norm.
If $\mathbf{v}$ and $\mathbf{w}$ are random vectors in $\R^k$, then the L\'evy-Prohorov metric \label{page:dLP} $d_{\textup{LP}}(\mathcal{L}(\mathbf{v}),\mathcal{L}(\mathbf{w}))$ between their distributions is defined as the infimum over all numbers $\delta>0$ such that the inequality $\P(\mathbf{v}\in A)\leq \P(\mathbf{w}\in A^{\delta})+\delta$ holds for all Borel sets $A\subset\R^k$. If $\nu,\nu_1,\nu_2,\dots$ is a sequence of random probability distributions on $\R^k$, then the expression $\nu_n\overset{\P}{\Rightarrow}\nu$ means that the sequence of scalar random variables  $d_{\textup{LP}}(\nu_n,\nu)$ converges to 0 in probability as $n\to\infty$. For two sequences of non-negative real numbers $\{a_n\}$ and $\{b_n\}$, we write $a_n\lesssim b_n$ if there is a constant $C>0$ not depending on $n$ such that  $a_n\leq Cb_n$ holds for all large $n$. When both of the relations $a_n\lesssim b_n$ and $b_n\lesssim a_n$ hold, we write $a_n\asymp b_n$. The relation $a_n=o(b_n)$ means $a_n/b_n\to 0$ as $n\to\infty$, and the relation $a_n=\mathcal{O}(b_n)$ is equivalent to $a_n\lesssim b_n$. The $k\times k$ identity matrix is denoted as $I_k$, and indicator function for a condition $\cdots$ is denoted as $1\{\cdots\}$. Lastly, we use $\mathbb{C}^+$ to refer to the set of complex numbers with positive imaginary part.

\section{Method}\label{sec:bootstrapmethod}

Conceptually, the proposed method is motivated by the fact that the standard nonparametric bootstrap, based on sampling with replacement, often performs poorly when it is applied naively to high-dimensional data, unless special low-dimensional structure is available. (For additional background, see the papers~\citep{karoui2019,lopescoordinate,lopesyao}, as well as the numerical results presented here at the end of Section~\ref{sec:numlss}.)
This general difficulty can be understood by noting that sampling with replacement implicitly relies on the empirical distribution of the data as a substitute for the true data-generating distribution. In other words, the nonparametric bootstrap attempts to approximate a $p$-dimensional distribution in a fully non-parametric way, which can be challenging for even moderately large values of $p$. For this reason, alternative bootstrap methods that sample from parametric distributions have been advocated to improve upon the nonparametric bootstrap in high dimensions~\citep[e.g.][]{Mammen,lopes2019bootstrapping,Candes}, and this is the viewpoint that we pursue here. 

\subsection{Bootstrap algorithm}

At an algorithmic level, the proposed method is built on top of two estimators. The first is an estimator $\hat\varsigma_n^{\,2}$ for the variance parameter $\varsigma_n^2=\var(\xi_1^2)$. The second is an estimator $\tilde\Sigma_n$ for the diagonal matrix of population eigenvalues $\textup{diag}(\lambda_1(\Sigma_n),\dots,\lambda_p(\Sigma_n))$. Once these two estimators have been assembled, the method generates bootstrap data from an elliptical model parameterized in terms of $\hat\varsigma_n^{\,2}$ and $\tilde\Sigma_n$. More specifically, the $i$th sample of each bootstrap dataset $\x_1^*,\dots,\x_n^*$ is generated to be of the form
$$\x_i^*=\xi_i^*\tilde\Sigma^{1/2}_n\u_i^*,$$
where $\xi_i^*$ is a non-negative random variable satisfying $\E((\xi_i^*)^2|X)=p$ and $\var((\xi_i^*)^2|X)=\hat\varsigma_n^{\,2}$, and $\u_i^*\in\R^p$ is drawn uniformly from the unit sphere, independently of $\xi_i^*$. Then, a single bootstrap sample of a generic spectral statistic $T_n=\psi(\lambda_1(\hat\Sigma_n),\dots,\lambda_p(\hat\Sigma_n))$ is computed as $T_n^*=\psi(\lambda_1(\hat\Sigma_n^*),\dots,\lambda_p(\hat\Sigma_n^*))$, where $\hat\Sigma_n^*$ is the sample covariance matrix of the bootstrap data.

To emphasize the modular role that $\hat\varsigma_n^{\,2}$ and $\tilde\Sigma_n$ play in the bootstrap sampling process, we will provide the details for their construction later in Sections~\ref{sec:varest} and~\ref{sec:specest}. With these points understood, the following algorithm shows that the method is very easy to implement.

\begin{alg}[Bootstrap for spectral statistics]\label{alg:boot}
\normalfont
~\\[-0.2cm]
\hrule
\vspace{0.1cm}
\noindent \textbf{Input:} The number of bootstrap replicates $B$, as well as $\tilde{\Sigma}_n$ and $\hat{\varsigma}_n^2$.\\[0.2cm]
\textbf{For:} $b=1,\dots,B$ \textbf{ do in parallel }\\[-0.4cm]
\begin{itemize}[leftmargin=.5cm]
\item[1.]  Generate independent~random variables $g_1^*,\dots,g_n^*$ from a Gamma distribution with mean $p$ and variance $\hat\varsigma_n^2$, and then put $\xi^*_i=\sqrt{g^*_i}$ for $i=1,\dots,n$.\\[-0.2cm]

\item[2.] Generate independent random vectors $\mathbf{u}^*_1,\ldots,\mathbf{u}^*_n\in\R^p$ from the uniform distribution on the unit sphere.\\[-0.2cm]
\item[3.]  Compute  $\mathbf{x}^*_{i}=\xi^*_{i} \tilde{\Sigma}_n^{1/2} \mathbf{u}^*_{i}$ for $i=1,\dots,n$, and form $\hat\Sigma_n^*=\frac{1}{n}\sum_{i=1}^n \x_i^*(\x_i^*)\ttop$.\\[-0.2cm]
\item[4.] Compute the spectral statistic $T^*_{n,b}= \psi(\lambda_1(\hat\Sigma_n^*),\ldots,\lambda_p(\hat\Sigma_n^*))$.
\end{itemize}
\noindent \textbf{end for}\\[-0.2cm]

\noindent \textbf{Return:} The empirical distribution of $T^*_{n,1}, \ldots, T^*_{n,B}.$
\end{alg}
\vspace{-0.05cm}
\hrule
~\\[-0.5cm]
\\

\noindent\textbf{Remarks.} One basic but valuable feature of the algorithm is that it can be applied with equal ease to both linear and nonlinear spectral statistics. 
To comment on some more technical aspects of the algorithm, the Gamma distribution is used in step 1 because it offers a convenient way to generate non-negative random variables whose means and variances can be matched to any pair of positive numbers. (If the event $\hat\varsigma_n^{\,2}=0$ happens to occur, then the Gamma distribution in step 1 is interpreted as the point mass at $p$, so that $\xi_i^*=\sqrt p$ for all $i=1,\dots,n$.) Nevertheless, the choice of the Gamma distribution for generating $g_1^*,\dots,g_n^*$ is not required. Any other family of distributions on $[0,\infty)$ parameterized by means $\mu$ and variances $\sigma^2$, say $\{G_{\mu,\sigma^2}\}$, will be compatible with our bootstrap consistency result in Theorem~\ref{thm:main} if it satisfies the following two conditions: First, the pair $(\mu,\sigma^2)$ can be set to $(p,v)$ for any integer $p\geq 1$ and real number $v>0$. Second, there exists some fixed $\varepsilon>0$ such that the quantity $\int|\frac{t-p}{\sqrt p}|^{4+\varepsilon}dG_{p,\sigma^2}(t)$ remains bounded when $p$ diverges and $\sigma^2/p$ converges to a finite limit.
\subsection{Variance estimation}\label{sec:varest}

The estimation of the parameter $\varsigma_n^2=\var(\xi_1^2)$ is complicated by the fact that the random variables $\xi_1,\dots,\xi_n$ are not directly observable in an elliptical model. It is possible to overcome this challenge with the following estimating equation, which can be derived from an explicit formula  for $\var(\|\x_1\|_2^2)$ that is given in Lemma~\ref{lem:quadform}, 
\begin{equation}\label{eqn:esteqn}
    \varsigma_n^2 \ = \  p(p+2)\frac{\var(\|\mathbf{x}_1\|_2^2) - 2\tr(\Sigma_n^2)}{\tr(\Sigma_n)^2 + 2\tr(\Sigma_n^2)} +2p.
\end{equation}
Based on this equation, our approach is to separately estimate each of the three moment parameters on the right hand side, denoted as
 \begin{equation*}
     \begin{split}
          \alpha_n &= \tr(\Sigma_n^2)\\[0.2cm]
          \beta_n &= \var(\|\mathbf{x}_1\|_2^2)\\[0.2cm]
          \gamma_n &= \tr(\Sigma_n)^2.
     \end{split}
 \end{equation*}
 These parameters have the advantage that they can be estimated in a more direct manner, due to their simpler relations with the observations and the matrix $\Sigma_n$. Specifically, we use estimates defined according to 
\begin{align*}
    \hat{\alpha}_n &= \tr(\hat{\Sigma}_n^2) - \ts\frac{1}{n} \tr(\hat{\Sigma}_n)^2,\\
    \hat{\beta}_n & =\frac{1}{n-1}\sum_{i=1}^n \Big(\|\mathbf{x}_{i}\|_2^2 - \ts\frac{1}{n}\sum_{i'=1}^n \|\mathbf{x}_{i'}\|_2^2\Big)^2,\\
    \hat{\gamma}_n &= \tr(\hat{\Sigma}_n)^2.
\end{align*}
Substituting these estimates into~\eqref{eqn:esteqn} yields our proposed estimate for $\varsigma_n^2$
\begin{align}\label{eqn:varest}
   \hat{\varsigma}_n^{\,2} = \bigg( p(p+2)\frac{\hat{\beta}_n - 2\hat{\alpha}_n}{\hat{\gamma}_n + 2\hat{\alpha}_n} +2p\bigg)_{\!+}
\end{align}
where $x_+=\max\{x,0\}$ denotes the non-negative part of any real number $x$. The consistency of this estimate will be established in Theorem~\ref{thm:estimators}, which shows $\frac{1}{p}(\hat\varsigma_n^{\,2}-\varsigma_n^2)\to 0$ in probability as $n\to\infty$.

\subsection{Spectrum estimation}\label{sec:specest}
The problem of estimating the eigenvalues of a population covariance matrix has attracted long-term interest in the high-dimensional statistics literature, and many different estimation methods have been proposed ~\citep[e.g.][]{el2008spectrum,mestre2008improved,bai2010estimation, ledoit2015spectrum,kong2017spectrum}. In order to estimate $\lambda_1(\Sigma_n),\dots,\lambda_p(\Sigma_n)$ in our current setting, we modify the method of QuEST~\citep{ledoit2015spectrum}, which has become widely used in recent years. This choice has the benefit of making all aspects of our proposed method easy to implement, because QuEST is supported by a turnkey software package~\citep{Ledoit:numerical}.

We denote the estimates produced by QuEST as $\hat\lambda_{\textup{Q},1} \geq \dots \geq \hat{\lambda}_{\textup{Q},p}$, and we will use modified versions of them defined by
\begin{align}\label{eqn:bound}
    \tilde{\lambda}_{j} = \min(\hat{\lambda}_{\textup{Q},j}, \hat{b}_n),
\end{align}
for $j=1,\dots,p$, where we let $\hat{b}_n = \lambda_1(\hat\Sigma_n)+1$. The modification is done for theoretical reasons, to ensure that $\tilde\lambda_1,\dots,\tilde\lambda_p$ are asymptotically bounded, which follows from Lemma~\ref{lem:largest eigenvalues}. In addition, we define the diagonal $p\times p$ matrix associated with these estimates as  
\begin{align}\label{eqn:Quest}
    \tilde{\Sigma}_n = \textup{diag}(\tilde{\lambda}_{1},\ldots,\tilde{\lambda}_{p}).
\end{align}
Later, in Theorem~\ref{thm:estimators}, we will show that the estimates $\tilde\lambda_1,\dots,\tilde\lambda_p$ are consistent, in the sense that their empirical distribution converges weakly in probability to the correct limit as $n\to\infty$.

\section{Theoretical results}\label{sec:theory}
In this section, we present three theoretical guarantees for the proposed method. Theorem~\ref{thm:estimators} establishes appropriate notions of consistency for each of the estimators $\hat\varsigma_n^{\,2}$ and $\tilde\Sigma_n$. Second, our main result in Theorem~\ref{thm:main} shows that the bootstrap samples generated in Algorithm~\ref{alg:boot} consistently approximate the distributions of linear spectral statistics. Lastly, Theorem~\ref{thm:r} demonstrates the asymptotic validity of bootstrap-based inference procedures involving nonlinear spectral statistics.

\subsection{Setup}\label{sec:setup}
All of our theoretical analysis is framed in terms of a sequence of models indexed by $n$, so that all model parameters are allowed to vary with $n$, except when stated otherwise. The details of our model assumptions are given below in Assumptions~\ref{Data generating model} and~\ref{Regularity of spectrum}.

\begin{assumption}[Data generating model]
\label{Data generating model}
As $n\to\infty$, the dimension $p$ grows so that the ratio $c_n=p/n$ satisfies $c_n\to c$ for some positive constant $c$ different from 1. For each $i=1,\dots,n$, the observation $\x_i\in\R^p$ can be represented as
  \begin{equation}
      \x_i=\xi_i\Sigma_n^{1/2}\u_i,
  \end{equation}
  where $\Sigma_n\in\R^{p\times p}$ is a deterministic non-zero positive semidefinite matrix, and $(\u_1,\xi_1),\dots,$ $(\u_n,\xi_n)$ are i.i.d.~random vectors in $\R^p\times [0,\infty)$ satisfying the following conditions: The vector $\u_1$  is drawn from the uniform distribution on the unit sphere of $\R^p$, and is independent of $\xi_1$. In addition, as $n\to\infty$, the random variable $\xi_1^2$ satisfies $\E(\xi_1^2)=p$, as well as the conditions
 \begin{align} \label{eqn:moment assumption}
   \var\Big(\ts\frac{\xi_1^2-p}{\sqrt p}\Big)  =  \tau +o(1) \ \ \ \ \ \  \text{ and} \ \ \ \ \ \  \E\,\Big|\ts\frac{\xi_1^2-p}{\sqrt p}\Big|^{4+\varepsilon} \, \lesssim \, 1,
  \end{align}
 for some fixed constants $\tau \geq 0$ and $\varepsilon>0$ that do not depend on $n$.
\end{assumption}

\noindent\textbf{Remarks.}  With regard to the limiting value $c$ for the ratio $p/n$, the single case of $c=1$ is excluded so that we may employ certain facts about the QuEST method that were established in~\cite{ledoit2015spectrum}. From a practical standpoint, our proposed method can still be used effectively when $p/n=1$, as shown in Section~\ref{sec:numerical}. To address the moment conditions on $\xi_1^2$, a similar set of conditions was used in~\citep{hu2019aos} to establish a high-dimensional central limit theorem for linear spectral statistics. However, our condition involving a $4+\varepsilon$ moment for $(\xi_1^2-p)/\sqrt p$ replaces a corresponding $2+\varepsilon$ moment condition in that work. The extra bit of integrability is used here to show that the estimators  $\hat\varsigma_n^2$ and $\tilde\Sigma_n$ have suitable asymptotic properties for ensuring bootstrap consistency. 
\\

\noindent\textbf{Examples.} To illustrate that the conditions in~\eqref{eqn:moment assumption} cover a substantial range of situations, it is possible to provide quite a few explicit examples of distributions for $\xi_1^2$ that are conforming:
\begin{enumerate}
    \item 
    Chi-Squared distribution with $p$ degrees of freedom\\[-0.2cm]
    \item Poisson($p$)\\[-0.2cm]
    \item $(1-\tau)$Negative-Binomial$(p,1-\tau)$, \ for any $\tau \in (0,1)$\\[-0.2cm]
    \item Gamma$(p/\tau,1/\tau)$, \ for any $\tau>0$\\[-0.2cm]
    \item Beta-Prime$\left(\frac{p(1+p+\tau)}{\tau},\frac{1+p+2\tau}{\tau}\right)$, \ for any $\tau >0$\\[-0.2cm]
    \item Log-Normal$\left(\log(p)-\frac{1}{2}\log\big(1+\frac{\tau}{p}\big), \log\big(1+\frac{\tau}{p}\big) \right)$, \ for any $\tau > 0$ \\[-0.2cm]

    \item $(p+2\beta)$\,Beta$(p/2,\beta)$, \ for any $\beta>0$
\end{enumerate}
 It is also possible to give a more abstract class of examples that subsumes some of the previous ones as special cases. In detail, the conditions in~\eqref{eqn:moment assumption} will hold for any $\tau\geq0$ if $\xi_1^2= \sum_{j=1}^p z_{1j}^2$ for some independent random variables $z_{11},\dots,z_{1p}$ satisfying 
 \begin{equation}\label{eqn:suff}
 \small
         \ts\frac{1}{p}\sum_{j=1}^p \E(z_{1j}^2)=1, \quad \ts\frac{1}{p}\sum_{j=1}^p\var(z_{1j}^2) \to \tau, \text{\quad and \quad } \displaystyle\max_{1\leq j\leq p}\E|z_{1j}|^{8+2\varepsilon}\,\lesssim\, p^{1+\frac{\varepsilon}{4}}.
 \end{equation}
Further details for checking the validity of the previous examples, as well as explicit parameterizations, are provided in Appendix~\ref{app:examples}.

In addition to Assumption~\ref{Data generating model}, we need one more assumption dealing with the spectrum of the population covariance matrix $\Sigma_n$. To state this assumption, let $H_n$ denote the empirical distribution function associated with $\lambda_1(\Sigma_n),\dots,\lambda_p(\Sigma_n)$, which is defined for any $t\in \R$ according to
\begin{equation}\label{eqn:Hn}
    H_n(t)  \ = \ \frac{1}{p}\displaystyle\sum_{j=1}^p 1\{\lambda_j(\Sigma_n)\leq t\}.
\end{equation}
\begin{assumption}[Spectral structure]
\label{Regularity of spectrum}
There is a limiting spectral distribution $H$ such that as $n\to\infty$,
\begin{equation}
    H_n \ \Rightarrow \ H,
\end{equation}
where the support of $H$ is a finite union of closed intervals, bounded away from $0$ and $\infty$. Furthermore, there is a fixed compact interval in $(0,\infty)$ containing the support of $H_n$ for all large $n$.
\end{assumption}
\noindent\textbf{Remarks.} Variations of these conditions on population eigenvalues are commonly used throughout random matrix theory. This particular set of conditions was used in~\citep{ledoit2015spectrum} to establish theoretical guarantees for the QuEST estimation method in the context of IC models.

\subsection{Consistency of estimators}
Here, we establish the consistency of the estimators $\hat\varsigma_n^{\,2}$ and $\tilde\Sigma_n$, defined in~\eqref{eqn:varest} and~\eqref{eqn:Quest}. The appropriate notion of consistency for $\tilde\Sigma_n$ is stated in terms of its empirical spectral distribution function, which is defined for any $t\in\R$ as
\begin{equation}
    \tilde H_n(t)=\frac 1p\sum_{j=1}^p 1\{\tilde\lambda_j\leq t\}.
\end{equation}

\begin{theorem}
    \label{thm:estimators}
    Under Assumptions~\ref{Data generating model} and~\ref{Regularity of spectrum}, the following limits hold as $n \to \infty$,
    \begin{align}
       \ts\frac{1}{p}(\hat\varsigma_n^2-\varsigma_n^2)\ & \xrightarrow{\P} 0,\label{eqn:tauconsistency}\\[0.2cm]
    \tilde{H}_n &\xRightarrow{\P}H.\label{eqn:Hconsistency}
    \end{align}
\end{theorem}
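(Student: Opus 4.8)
The plan is to handle the two claims separately, since they rely on different tools. For the variance-estimation claim~\eqref{eqn:tauconsistency}, the natural route is to work through the estimating equation~\eqref{eqn:esteqn} together with the definition~\eqref{eqn:varest}. Writing $\hat\varsigma_n^{\,2}$ in terms of $\hat\alpha_n,\hat\beta_n,\hat\gamma_n$, it suffices to show that the scaled estimators of the three moment parameters are consistent: I would prove that $\frac{1}{p^2}(\hat\alpha_n-\alpha_n)\xrightarrow{\P}0$, $\frac{1}{p^2}(\hat\beta_n-\beta_n)\xrightarrow{\P}0$, and $\frac{1}{p^2}(\hat\gamma_n-\gamma_n)\xrightarrow{\P}0$, where the $p^2$ scaling is dictated by the fact that $\tr(\Sigma_n)\asymp p$ and $\tr(\Sigma_n^2)\asymp p$ under Assumption~\ref{Regularity of spectrum}, so that $\alpha_n,\gamma_n,\beta_n$ are all of order $p^2$ (the order of $\beta_n=\var(\|\x_1\|_2^2)$ coming from Lemma~\ref{lem:quadform}). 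Given these three limits, and the fact that the denominator $\gamma_n+2\alpha_n$ stays bounded below by a positive multiple of $p^2$, a continuous-mapping / ratio argument transfers consistency through the rational expression in~\eqref{eqn:esteqn}; the outer $(\cdot)_+$ truncation only helps, since $\varsigma_n^2/p$ is bounded and the truncation is $1$-Lipschitz. To get the three moment limits themselves, I would compute means and variances: for $\hat\gamma_n=\tr(\hat\Sigma_n)^2$ and $\hat\alpha_n=\tr(\hat\Sigma_n^2)-\frac1n\tr(\hat\Sigma_n)^2$, these are standard moment computations for sample covariance matrices that go through because the $4+\varepsilon$ moment bound in~\eqref{eqn:moment assumption} controls the relevant fourth moments of $\|\x_1\|_2^2$ (note $\|\x_i\|_2^2 = \xi_i^2\,\u_i\ttop\Sigma_n\u_i$, and $\u_i\ttop\Sigma_n\u_i$ concentrates around $\tr(\Sigma_n)/p$); for $\hat\beta_n$, which is just a one-dimensional sample variance of the i.i.d.\ scalars $\|\x_i\|_2^2$, a weak law of large numbers applied to $\|\x_i\|_2^4$ suffices, again using the $4+\varepsilon$ moment.

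For the spectrum-estimation claim~\eqref{eqn:Hconsistency}, the idea is to reduce to the already-established consistency of QuEST. The key observation is that, conditionally on nothing (the data are elliptical), the sample covariance $\hat\Sigma_n$ has the \emph{same} limiting spectral distribution as an MP-type model with the same population spectrum—this is exactly the content of the elliptical limit theory invoked in the paper—so the hypotheses under which QuEST was proven consistent in~\citep{ledoit2015spectrum} are met (this is presumably the content of Lemma~\ref{lem:LW} mentioned in the introduction, which I am entitled to use). That gives $\hat H_{\textup{Q},n}\xRightarrow{\P}H$, where $\hat H_{\textup{Q},n}$ is the empirical distribution of the raw QuEST outputs $\hat\lambda_{\textup{Q},1},\dots,\hat\lambda_{\textup{Q},p}$. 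It then remains to check that the truncation $\tilde\lambda_j=\min(\hat\lambda_{\textup{Q},j},\hat b_n)$ with $\hat b_n=\lambda_1(\hat\Sigma_n)+1$ does not disturb the limit. Since $H$ has compact support bounded away from $0$ and $\infty$ (Assumption~\ref{Regularity of spectrum}) and $\lambda_1(\hat\Sigma_n)$ converges to the right edge of the support of the companion MP distribution (which is finite and strictly larger than the top of $\mathrm{supp}(H)$) by Lemma~\ref{lem:largest eigenvalues}, the threshold $\hat b_n$ eventually exceeds any fixed point to the right of $\mathrm{supp}(H)$ with probability tending to $1$; hence for any continuity point $t$ of $H$ to the right of the support, $\tilde H_n(t)\to1$ in probability, and for $t$ in or below the support the truncation changes $\hat H_{\textup{Q},n}(t)$ only on an asymptotically negligible fraction of indices. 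Quantitatively, $d_{\textup{LP}}(\tilde H_n,\hat H_{\textup{Q},n})\to0$ in probability, and combining with $d_{\textup{LP}}(\hat H_{\textup{Q},n},H)\to0$ finishes via the triangle inequality.

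The main obstacle I anticipate is the first claim rather than the second: showing that $\hat\beta_n$—and, through the estimating equation, $\hat\varsigma_n^{\,2}$—is consistent at the right scale requires a genuinely sharp handle on $\var(\|\x_1\|_2^2)$ and on higher moments of quadratic forms $\u_1\ttop\Sigma_n\u_1$ on the sphere, and these must be matched precisely enough that the leading $p^2$-order terms cancel in the rational combination of~\eqref{eqn:esteqn}. In particular one must verify that the error is $o(p)$ after dividing through, not merely $o(p^2)$, because the target quantity $\varsigma_n^2$ is itself only of order $p$; this is where the upgrade from a $2+\varepsilon$ to a $4+\varepsilon$ moment condition in Assumption~\ref{Data generating model} is essential, and the bookkeeping of which terms are $O(p)$ versus $O(p^2)$ in $\hat\alpha_n,\hat\beta_n,\hat\gamma_n$ is the delicate part. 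The spectrum claim, by contrast, is mostly a matter of quoting Lemma~\ref{lem:LW}, Lemma~\ref{lem:largest eigenvalues}, and a soft truncation argument.
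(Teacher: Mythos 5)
Your overall strategy matches the paper's---reduce~\eqref{eqn:tauconsistency} to consistency of $\hat\alpha_n,\hat\beta_n,\hat\gamma_n$ plus a ratio argument, and reduce~\eqref{eqn:Hconsistency} to QuEST consistency plus showing the truncation at $\hat b_n$ is asymptotically negligible---but each half of your proposal, as written, has a genuine gap in the quantitative step.

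On~\eqref{eqn:tauconsistency}: you assert that $\alpha_n=\tr(\Sigma_n^2)$, $\beta_n=\var(\|\x_1\|_2^2)$, and $\gamma_n=\tr(\Sigma_n)^2$ are all of order $p^2$, and you propose to prove $p^{-2}(\hat\alpha_n-\alpha_n)\to 0$, $p^{-2}(\hat\beta_n-\beta_n)\to 0$, $p^{-2}(\hat\gamma_n-\gamma_n)\to 0$. But you also correctly state $\tr(\Sigma_n^2)\asymp p$, which gives $\alpha_n\asymp p$, not $p^2$; and Lemma~\ref{lem:quadform} together with $\E(\xi_1^4)/(p(p+2))=1+O(1/p)$ gives $\beta_n\asymp p$ as well. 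With the prefactor $p(p+2)\asymp p^2$ in~\eqref{eqn:esteqn} and the denominator $\gamma_n+2\alpha_n\asymp p^2$, an error of size $o_P(p^2)$ in $\hat\alpha_n$ or $\hat\beta_n$ only yields $\hat\varsigma_n^{\,2}-\varsigma_n^2=o_P(p^2)$, which is vacuous since $\varsigma_n^2\asymp p$. You do remark at the end that one must verify the error is ``$o(p)$, not merely $o(p^2)$,'' so the proposal is internally contradictory: the stated plan targets a rate that is one power of $p$ too weak for $\hat\alpha_n$ and $\hat\beta_n$. The paper instead proves the strictly stronger statements $\hat\alpha_n/\alpha_n\xrightarrow{\P}1$, $\hat\gamma_n/\gamma_n\xrightarrow{\P}1$, and $(p+2)(\hat\beta_n-\beta_n)/(\gamma_n+2\alpha_n)\xrightarrow{\P}0$; the last requires $\var(\hat\beta_n)\lesssim p$ (via $\E|\|\x_1\|_2^2-\tr\Sigma_n|^4\lesssim p^2$), which is exactly where the $4+\varepsilon$ moment condition and the fourth-moment quadratic-form bound $\E|\u_1\ttop\Sigma_n\u_1-\tfrac1p\tr\Sigma_n|^4\lesssim p^{-2}$ do real work.

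On~\eqref{eqn:Hconsistency}: the reduction to Lemma~\ref{lem:LW} and then to harmlessness of the truncation is the right structure, but your key claim is mis-supported. You write that $\lambda_1(\hat\Sigma_n)$ ``converges to the right edge of the support of the companion MP distribution \dots by Lemma~\ref{lem:largest eigenvalues}.'' Lemma~\ref{lem:largest eigenvalues} gives only the \emph{upper} bound $\limsup_n\lambda_1(\hat\Sigma_n)\le C$ a.s.\ and says nothing about convergence to the edge, let alone a no-outliers statement for elliptical models, which is not proved anywhere in the paper and would be a substantial result. What the truncation step actually needs is a \emph{lower} bound on $\hat b_n=\lambda_1(\hat\Sigma_n)+1$ that eventually exceeds the top $u$ of $\mathrm{supp}(H)$. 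The paper obtains this elementarily: taking a unit eigenvector $\mathbf{v}_1$ of $\Sigma_n$ at its top eigenvalue, $\lambda_1(\hat\Sigma_n)\ge\lambda_1(\Sigma_n)\,\mathbf{v}_1\ttop W_n\mathbf{v}_1$ with $W_n=\tfrac1n\sum_i\xi_i^2\u_i\u_i\ttop$, and Chebyshev plus Lemma~\ref{lem:quadform} give $\mathbf{v}_1\ttop W_n\mathbf{v}_1\xrightarrow{\P}1$; since $\lambda_1(\Sigma_n)\ge u-\varepsilon$ eventually, $\hat b_n>u+\varepsilon$ with probability tending to one. You should replace the edge-convergence claim with a lower bound of this kind.
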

\noindent\textbf{Remarks.} The limits~\eqref{eqn:tauconsistency} and~\eqref{eqn:Hconsistency} are proved in Appendices~\ref{sec:tauconsistency} and~\ref{sec:Hconsistency} respectively. Although these limits can be stated in a succinct form, quite a few details are involved in their proofs. For instance, the analysis of $\hat\varsigma_n^{\,2}$ is based on extensive calculations with polynomial functions of the quadratic forms $\|\x_i\|_2^2$ and $\x_i\ttop\x_j$ with $i\neq j$, as well as associated mixed moments. The consistency of $\tilde H_n$ is also notable because it requires showing the consistency of QuEST in elliptical models, and it seems that the consistency of QuEST has not previously been reported outside of IC models.

\subsection{Consistency of bootstrap}
To develop our main result on the consistency of the proposed bootstrap method, it is necessary to recall some background facts and introduce several pieces of notation.

Under Assumptions~\ref{Data generating model} and~\ref{Regularity of spectrum}, it is known from~\citep[Theorem 1.1]{Bai:Zhou:2008} that an extended version of the classical Mar\v{c}enko-Pastur Theorem holds for the empirical spectral distribution function $\hat H_n(t)=\frac{1}{p}\sum_{j=1}^p 1\{\lambda_j(\hat{\Sigma}_n)\leq t\}$. Namely, there is a probability distribution $\Psi(H,c)$ on $[0,\infty)$, depending only on $H$ and $c$, such that the weak limit $\hat H_n\Rightarrow \Psi(H,c)$ occurs almost surely. In this statement, we may regard $\Psi(\cdot,\cdot)$ as a map that takes a distribution $H'$ on $[0,\infty)$ and a number $c'>0$ as input, and returns a new distribution $\Psi(H'\!,c')$ on $[0,\infty)$ whose Stieltjes transform $m_{H'\!,c'}(z)=\int \frac{1}{\lambda-z}d(\Psi(H'\!,c'))(\lambda)$ solves the Mar\v{c}enko-Pastur equation~\eqref{eqn:MPdef} below.
 That is, for any $z\in\mathbb{C}^+$, the number $m_{H'\!,c'}=m_{H'\!,c'}(z)$ is the unique solution to the equation 
\begin{equation}\label{eqn:MPdef}
    m_{H'\!,c'}=\int \frac{1}{t(1-c'-c'zm_{H'\!,c'})-z}dH'(t)
\end{equation}
within the set $\{m\in\C\, | \, (c'-1)/z+c'm\in\mathbb{C}^+\}$.

The map $\Psi(\cdot,\cdot)$ is relevant to our purposes here, because it determines a centering parameter that is commonly used in limit theorems for linear spectral statistics.
In detail, if $H_{n,c_n}$ denotes a shorthand for the probability distribution $\Psi(H_n,c_n)$, and if $f$ is a real-valued function defined on the support of $H_{n,c_n}$, then the associated centering parameter is defined as
\begin{equation}\label{eqn:varthetadef}
    \vartheta_n(f) \ = \ \int f(t)dH_{n,c_n}(t).
\end{equation}
Similarly, let $\tilde H_{n,c_n}=\Psi(\tilde H_n,c_n)$ and $\tilde\vartheta_n(f)=\int f(t)d\tilde H_{n,c_n}(t)$. Also, in order to simplify notation for handling the joint distribution of several linear spectral statistics arising from a fixed set of functions $\mathbf{f}=(f_1,\dots,f_k)$, we write $T_n(\mathbf f)=(T_n(f_1),\dots,T_n(f_k))$, and likewise for $T_{n,1}^*(\mathbf{f})$, $\vartheta_n(\mathbf{f})$, and $\tilde\vartheta_n(\mathbf{f})$.

As one more preparatory item, recall from page~\pageref{page:dLP} that $d_{\textup{LP}}$ denotes the L\'evy-Prohorov metric for comparing distributions on $\R^k$. This metric is a standard choice for formulating bootstrap consistency results, as it has the fundamental property of metrizing weak convergence. 
\begin{theorem}\label{thm:main}
Suppose Assumptions~\ref{Data generating model} and~\ref{Regularity of spectrum} hold, and let $\mathbf{f}=(f_{1}, \ldots, f_{k})$ be a fixed set of real-valued functions that are analytic on an open subset of $\R$ containing $[0,\infty)$. Under these conditions, if $T_{n,1}^*(\mathbf{f})$ is generated as in Algorithm~\ref{alg:boot} using the estimators $\hat\varsigma_n^{\,2}$ and $\tilde\Sigma_n$ defined by~\eqref{eqn:varest} and~\eqref{eqn:Quest}, then the following limit holds as $n\to\infty$
\begin{equation}\label{eqn:main}
d_{\mathrm{LP}}\Big(\mathcal{L}\big(p\{T_{n}(\mathbf{f})-\vartheta_{n}(\mathbf{f})\}\big)\, ,\, 
\mathcal{L}\big(p\{T_{n, 1}^{*}(\mathbf{f})-\tilde{\vartheta}_{n}(\mathbf{f})\} \big| X\big)\Big)
\ \xrightarrow{\P}\ 0.
\end{equation}
\end{theorem}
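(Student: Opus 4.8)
The plan is to obtain~\eqref{eqn:main} by exhibiting a single Gaussian law to which both of the distributions being compared converge. The key external input is the high-dimensional central limit theorem for linear spectral statistics in elliptical models of~\citep{hu2019aos}: in a triangular-array form, it asserts that under Assumptions~\ref{Data generating model} and~\ref{Regularity of spectrum} one has $p\{T_n(\mathbf f)-\vartheta_n(\mathbf f)\}\Rightarrow N(\mu(\mathbf f),\Xi(\mathbf f))$, where the mean vector $\mu(\mathbf f)$ and covariance matrix $\Xi(\mathbf f)$ are given by explicit contour integrals depending only on the limiting triple $(H,c,\tau)$ --- with $\tau$ the limit in~\eqref{eqn:moment assumption} of $\var((\xi_1^2-p)/\sqrt p)$ --- and depending on the spectral distribution only through weak convergence. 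This disposes of the first law in~\eqref{eqn:main}. Since $d_{\textup{LP}}$ metrizes weak convergence, the triangle inequality bounds the random variable in~\eqref{eqn:main} by $d_{\textup{LP}}\big(\mathcal L(p\{T_n(\mathbf f)-\vartheta_n(\mathbf f)\}),N(\mu(\mathbf f),\Xi(\mathbf f))\big)+d_{\textup{LP}}\big(\mathcal L(p\{T_{n,1}^*(\mathbf f)-\tilde\vartheta_n(\mathbf f)\}\mid X),N(\mu(\mathbf f),\Xi(\mathbf f))\big)$; the first summand is non-random and tends to $0$, so it remains to show that the second tends to $0$ in probability, i.e.\ that conditionally on the data the bootstrapped statistic converges weakly to the \emph{same} Gaussian.

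Conditionally on $X=(\x_1,\dots,\x_n)$, the bootstrap sample $\x_i^*=\xi_i^*\tilde\Sigma_n^{1/2}\u_i^*$ is itself an elliptical array, with deterministic population covariance $\tilde\Sigma_n$, aspect ratio $c_n$, and radial variable obeying $\E((\xi_i^*)^2\mid X)=p$ and $\var((\xi_i^*)^2/\sqrt p\mid X)=\hat\varsigma_n^{\,2}/p$. I would check that this conditional model satisfies the hypotheses of the elliptical CLT with limiting triple $(H,c,\tau)$: (i) $c_n\to c$; (ii) $\tilde H_n\xRightarrow{\P}H$ by Theorem~\ref{thm:estimators}, while the truncation $\tilde\lambda_j=\min(\hat\lambda_{\textup{Q},j},\hat b_n)\in[0,\hat b_n]$ together with Lemma~\ref{lem:largest eigenvalues} keeps the support of $\tilde H_n$ inside a fixed compact set for all large $n$ with probability tending to one; (iii) $\hat\varsigma_n^{\,2}/p\xrightarrow{\P}\tau$, since $\var((\xi_1^2-p)/\sqrt p)=\varsigma_n^2/p\to\tau$ by~\eqref{eqn:moment assumption} and $\frac1p(\hat\varsigma_n^{\,2}-\varsigma_n^2)\xrightarrow{\P}0$ by Theorem~\ref{thm:estimators}; and (iv) the radial moment bound, which holds because $g_i^*=(\xi_i^*)^2$ is Gamma with mean $p$ and variance $\hat\varsigma_n^{\,2}$, so its standardized absolute moments $\E(|g_i^*-p|^m\mid X)/(\hat\varsigma_n^{\,2})^{m/2}$ are bounded by a constant depending only on $m$, giving $\E(|(\xi_i^*)^2-p|^{4+\varepsilon'}/p^{2+\varepsilon'/2}\mid X)\lesssim(\hat\varsigma_n^{\,2}/p)^{2+\varepsilon'/2}$, which is bounded because $\hat\varsigma_n^{\,2}/p$ has a finite limit in probability; the degenerate event $\hat\varsigma_n^{\,2}=0$, which gives $\xi_i^*\equiv\sqrt p$, is the $\tau=0$ boundary case of the CLT and causes no difficulty.

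Because (ii) and (iii) hold only in probability while the CLT is to be applied conditionally, I would invoke the subsequence principle: along any subsequence, extract a further subsequence on which $\tilde H_n\Rightarrow H$ and $\hat\varsigma_n^{\,2}/p\to\tau$ hold almost surely; on that event $(\tilde\Sigma_n,\xi_i^*)$ defines, conditionally on $X$, a genuine deterministic elliptical array meeting the CLT's assumptions with limiting triple $(H,c,\tau)$, so $\mathcal L\big(p\{T_{n,1}^*(\mathbf f)-\tilde\vartheta_n(\mathbf f)\}\mid X\big)\Rightarrow N(\mu(\mathbf f),\Xi(\mathbf f))$ --- here the centering $\tilde\vartheta_n(\mathbf f)=\int f\,d\Psi(\tilde H_n,c_n)$ is exactly the finite-$n$ deterministic-equivalent centering the CLT uses in the bootstrap world, and the limiting mean and covariance coincide with those of the real world precisely because they depend on $(\tilde H_n,c_n,\hat\varsigma_n^{\,2}/p)$ only through the limits $(H,c,\tau)$. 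Combining this with the elliptical CLT for the real data and the triangle-inequality bound above shows the random variable in~\eqref{eqn:main} tends to $0$ almost surely along the sub-subsequence; since every subsequence admits such a sub-subsequence, it tends to $0$ in probability, which is~\eqref{eqn:main}.

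The step I expect to be the main obstacle is verifying rigorously that the conditional bootstrap model lies within the scope of the elliptical CLT of~\citep{hu2019aos}. Two points need care. First, $\tilde\Sigma_n$ is controlled only through $\tilde H_n\xRightarrow{\P}H$ and the upper bound $\tilde\lambda_j\le\hat b_n$; its smallest eigenvalues need not stay away from $0$, so one cannot directly quote a version of the CLT assuming the limiting spectral support is bounded away from $0$. This is where the hypothesis that the $f_j$ are analytic on an open set containing all of $[0,\infty)$ --- not merely near the limiting bulk --- is essential: combined with the fact that, conditionally and with probability tending to one, all eigenvalues of $\hat\Sigma_n^*$ lie in a fixed compact subset of $[0,\infty)$, it permits choosing a fixed contour enclosing the relevant spectrum and running the Stieltjes-transform / contour-integral argument underlying the CLT, including the ``no eigenvalues outside the bulk'' input. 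Second, one must confirm that the limiting mean and covariance functionals of the elliptical CLT are continuous with respect to weak convergence of the spectral distribution and ordinary convergence of $c_n$ and $\hat\varsigma_n^{\,2}/p$, so that the two worlds truly yield the same Gaussian; this amounts to inspecting the explicit formulas in~\citep{hu2019aos} and verifying the attendant dominated-convergence estimates on their defining contour integrals. The remaining pieces --- the Gamma moment computation, the subsequence bookkeeping, and the triangle-inequality step --- are routine given Theorem~\ref{thm:estimators} and the cited CLT.
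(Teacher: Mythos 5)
Your proposal matches the paper's proof essentially step for step: the triangle inequality through a common Gaussian limit, the appeal to the elliptical CLT of Hu, Bai, and Zheng for the real-data term, the subsequence extraction to convert the in-probability conclusions of Theorem~\ref{thm:estimators} into almost-sure convergences along a sub-subsequence, and the verification that $(\mathcal L(\xi_1^*\mid X),\tilde H_n)$ then satisfies the CLT's hypotheses with the same limiting triple $(H,c,\tau)$, using the Gamma moment bound (Lemma~\ref{lem:gamma}) and Lemma~\ref{lem:largest eigenvalues}. The one place you flag a possible obstacle---that the smallest eigenvalues of $\tilde\Sigma_n$ need not stay away from $0$---turns out to be a non-issue in the form the paper invokes the CLT: condition~(b) in the proof requires only $H_n\Rightarrow H$ together with an upper bound $\lambda_1(\Sigma_n)\lesssim 1$, with no lower-edge requirement, so the analyticity of the $f_j$ on an open set containing $[0,\infty)$ already accommodates possibly vanishing $\tilde\lambda_j$. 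Your closing concern about continuity of the limiting mean and covariance functionals in $(H,c,\tau)$ is likewise subsumed by the paper's observation that the Gaussian limit in the Hu--Bai--Zheng theorem depends only on $(\mathbf f,H,c,\tau)$ and not on the approximating sequence.
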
 
\noindent\textbf{Remarks.} The proof is given in Appendix~\ref{app:mainproof}, and makes key use of a recently developed central limit theorem for linear spectral statistics due to~\citep{hu2019aos}.  Regarding other aspects of the theorem, there are two points to discuss. First, the assumption that the functions $f_1,\dots,f_k$ are defined on an open set containing $[0,\infty)$ has been made for technical simplicity. In the setting where $p/n\to c>1$, this assumption is minor because $f_1,\dots,f_k$ must be defined at 0 due to the singularity of $\hat\Sigma_n$. Nevertheless, if $p/n\to c\in (0,1)$, then it is possible to show that a corresponding version of the theorem holds for analytic functions that are not defined at 0, and our numerical results confirm that the proposed method can successfully handle such cases.
Second, the quantities $\vartheta_n(\mathbf{f})$ and $\tilde\vartheta_n(\mathbf{f})$ are only introduced so that the distributions appearing in~\eqref{eqn:main} have non-trivial weak limits, which facilitates the proof. Still, the proposed method can be applied without requiring any particular type of centering.

\subsection{Application to inference on stable rank with guarantees}\label{sec:rank}
When dealing with high-dimensional covariance matrices, it is often of interest to have a measure of the number of ``dominant eigenvalues''. One such measure is the \emph{stable rank}, defined as
\begin{equation}\label{eqn:rn}
r_n = \frac{\ \tr(\Sigma_n)^2}{\tr(\Sigma_n^2)},
\end{equation}
which arises naturally in a plethora of situations~\citep[e.g.][]{Bai:1996,Srivastava2005some,Tang:2012,Lopes:2016,Spielman,Lopes:EJS}. Whenever $\Sigma_n$ is non-zero, this parameter satisfies $1\leq r_n\leq \textup{rank}(\Sigma_n)$, and the equality $r_n=p$ holds if and only if $\Sigma_n$ is proportional to the identity matrix.

In this subsection, we illustrate how the proposed bootstrap method can be applied to solve some inference problems involving the parameter $r_n$. Our first example shows how to construct a confidence interval for $r_n$, and our subsequent examples deal with testing procedures related to $r_n$. Later, in Theorem~\ref{thm:r}, we establish the theoretical validity of the methods used in these examples---showing that the confidence interval has asymptotically exact coverage, and that the relevant testing procedures maintain asymptotic control of their levels.

\subsubsection{Confidence interval for stable rank} 

Our confidence interval for $r_n$ is constructed using the estimator 
\begin{align}\label{eqn:rhatdef}
    \hat{r}_n = \frac{\tr(\hat\Sigma_n)^2}{\tr(\hat\Sigma_n^2) - \hat\Delta_n},
\end{align}
where we define
\begin{align*}
    \hat \Delta_n &= \frac{\tr(\hat\Sigma_n^2) }{n}\bigg[\frac{2(n-1)}{n}\frac{p^2+\hat\varsigma_n^2}{p(p+2)} - 1\bigg]\\[0.2cm]
    & \ \ \  + \frac{\tr(\hat\Sigma_n)^2}{n}\bigg[ \frac{n+1}{n}  +\frac{n-1}{n} \frac{{\hat\varsigma}_n^2 - 2p}{p(p+2)} - \frac{2(n-1)}{n^2} \frac{p^2+\hat\varsigma_n^2}{p(p+2)}\bigg],  
\end{align*}
and we set $\hat r_n$ equal to $n$ in the exceptional case that its denominator is 0.
It should be noted that $\hat r_n$ differs from the naive plug-in rule $\tr(\hat\Sigma_n)^2/\tr(\hat\Sigma_n^2)$, since the extra term $\hat\Delta$ in the denominator serves as a bias correction.

To proceed, let  $q_{1-\alpha}$ denote $(1-\alpha)$-quantile of the random variable \smash{$(\hat r_n-r_n)/p$} for any fixed $\alpha\in (0,1)$, and consider the interval $\mathcal{I}_n=[\hat{r}_n-p q_{1-\alpha/2}, \hat{r}_n-p q_{\alpha/2}]$. Whenever the distribution of $(\hat r_n-r_n)/p$ is continuous, this interval satisfies
\begin{equation}
    \P(r_n\in\mathcal{I}_n)=1-\alpha.
\end{equation}
However, the quantiles $q_{\alpha/2}$ and $q_{1-\alpha/2}$ are unknown, and so they must be estimated. This can be accomplished by generating bootstrap samples of the following form in Algorithm~\ref{alg:boot},

$$
 T^*_{n,1} = \frac{1}{p}\bigg(\frac{ \tr(\hat{\Sigma}^*_n)^2}{\tr((\hat{\Sigma}^*_n)^2) -
\hat{\Delta}_{n}^{^*}}
\, - \,
 \frac{ \tr(\tilde\Sigma_n)^2}{\tr(\tilde\Sigma_n^2)}\bigg),
$$
where $\hat\Delta_n^{^*}$ is defined by modifying the previous formula for $\hat\Delta_n$ so that $\hat\Sigma_n$ and $\hat\varsigma_n^2$ are replaced by versions computed with the bootstrap data $\x_1^*,\dots,\x_n^*$. Also, we define $T_{n,1}^*$ to be 0 in the exceptional case of a denominator being equal to 0.
 Letting $\hat q_{\alpha/2}$ and $\hat q_{1-\alpha/2}$ denote the respective $\alpha/2$ and $(1-\alpha/2)$-quantiles of $\mathcal{L}(T_{n,1}^*|X)$, the proposed confidence interval is defined as
 \begin{equation}\label{eqn:Ihatdef}
     \hat{\mathcal{I}}_n \ = \ \Big[ \hat{r}_n-p \hat q_{1-\alpha/2}\, , \, \hat{r}_n-p \hat q_{\alpha/2} \Big].
 \end{equation}
 Below, Theorem~\ref{thm:r} shows that as $n\to\infty$, the coverage probability $\P(r_n\in\hat{\mathcal{I}}_n)$ converges to $1-\alpha$, as desired.

\subsubsection{Hypotheses related to stable rank}\label{sec:testrank}
\noindent\textbf{Screening data for PCA.} Consider a scenario where a collection of different datasets are to be screened for further investigation by principal components analysis (PCA). In this situation, the datasets that should be discarded are the ones that cannot be well summarized by a moderate number of principal components. To put this in more quantitative terms, a dataset may be considered unsuitable for PCA if the stable rank $r_n$ exceeds a certain fraction of the full dimension $p$. That is, if $\frac{r_n}{p}>\e_0$ for some fixed reference value $\e_0\in (0,1)$. On the other hand, if $\frac{r_n}{p}\leq \e_0$, then the dataset may be retained. This leads to considering the hypothesis testing problem
\begin{equation}\label{eqn:H0}
\mathsf{H}_{0,n}: \ts\frac{r_n}{p} \leq \epsilon_0 \text{ \ \ \ \ \ \  vs. \ \ \ \ \ \  } \mathsf{H}_{1,n}: \ts\frac{r_n}{p} > \epsilon_0.
\end{equation}
To develop a testing procedure, we may again consider the $(1-\alpha)$-quantile $q_{1-\alpha}$ of the random variable $(\hat r_n-r_n)/p$. This quantile serves as a conceptual basis for a rejection criterion, because it satisfies the following inequality under the null hypothesis
\begin{equation}
    \P\Big(\ts\frac{\hat{r}_n}{p}-\e_0 > q_{1-\alpha}\Big) \ \leq \ \alpha.
\end{equation}
In other words, if $q_{1-\alpha}$ were known, then a level-$\alpha$ testing procedure would result from using $\frac{1}{p}\hat{r}_n-\epsilon_0> q_{1-\alpha}$ as a rejection criterion. Accordingly, a bootstrap-based version of this procedure rejects the null hypothesis when $\frac{1}{p}\hat{r}_n-\epsilon_0> \hat q_{1-\alpha}$, with the quantile estimate $\hat q_{1-\alpha}$ defined as before.\\
\\
\noindent \textbf{Testing for sphericity.} One more example of a testing problem related to $r_n$ is that of testing for sphericity, 
\small
\begin{equation}\label{eqn:sphericity}
    \mathsf{H}'_{0,n}: \Sigma_n=\sigma^2 I \text{ \ for some \ $\sigma^2>0$}  \text{ \ \ \     vs.  \ \ \     }  \mathsf{H}'_{1,n}: \Sigma_n\neq \sigma^2 I \text{ \  for any \ $\sigma^2>0$}. 
\end{equation}
\normalsize
The connection to the parameter $r_n$ arises from the fact that~\eqref{eqn:sphericity} is equivalent to the problem $\mathsf{H}'_{0,n}: \frac{r_n}{p}=1$ vs. $\mathsf{H}'_{1,n}: \frac{r_n}{p}<1$. This observation was used in the influential paper~\citep{Srivastava2005some} to develop a formula-based sphericity test. By analogy with our discussion of the problem~\eqref{eqn:H0}, this observation can also be used to develop a bootstrap-based sphericity test. However, there is one point of distinction in the current situation, which is that the eigenvalues of $\Sigma_n$ no longer need to be estimated. The reason is that under $\mathsf{H}_{0,n}'$, the scale-invariance of the statistic $(\hat r_n-r_n)/p$ causes it to behave as if $\Sigma_n=I_p$. Consequently, to estimate the quantiles of the null distribution of $(\hat r_n-r_n)/p$, Algorithm~\ref{alg:boot} can be run using $\tilde \Sigma_n=I_p$. To summarize, if we let $\hat q_{\alpha}'$ denote the resulting estimate for the $\alpha$-quantile of the null distribution of $(\hat r_n-r_n)/p$, then the rejection criterion is $\frac{\hat r_n}{p}-1\leq \hat q_{\alpha}'$.

The following result establishes the theoretical validity of the procedures discussed in this subsection.
\begin{theorem}\label{thm:r}
 If Assumptions \ref{Data generating model} and \ref{Regularity of spectrum} hold,  then as $n \to \infty$,
 \begin{equation}\label{eqn:Hlim1}
    \P\big(r_n\in\hat{\mathcal{I}}_n\big) \ \to \ 1-\alpha.
\end{equation}
Furthermore, if $\mathsf{H}_{0,n}$ holds for all large $n$, then 
\begin{equation}\label{eqn:Hlim2}
\P\Big(\ts\frac{\hat{r}_n}{p}-\epsilon_0> \hat{q}_{1-\alpha}\Big)
 \ \leq \  \alpha+o(1),
\end{equation}
or if $\mathsf{H}'_{0,n}$ holds for all large $n$, then 
\begin{equation}\label{eqn:Hlim3}
\P\Big(\ts\frac{\hat{r}_n}{p}-1\leq\hat{q}_{\alpha}'\Big)
 \ \to \ \alpha.
\end{equation}
\end{theorem}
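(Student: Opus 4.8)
The plan is to deduce all three conclusions from the main bootstrap consistency result, Theorem~\ref{thm:main}, applied to the pair of analytic functions $f_1(t)=t$ and $f_2(t)=t^2$, together with an explicit identification of the limiting distribution of the (properly scaled and centered) sequence of statistics $(\hat r_n - r_n)/p$. First I would observe that $\tr(\hat\Sigma_n)=p\,T_n(f_1)$ and $\tr(\hat\Sigma_n^2)=p\,T_n(f_2)$, so $\hat r_n$ is a smooth function of $(T_n(f_1),T_n(f_2))$ — precisely, $\hat r_n = p\,T_n(f_1)^2/(T_n(f_2)-c_n T_n(f_1)^2)$, and similarly $r_n = p\,\vartheta_n(f_1)^2/\vartheta_n(f_2)$ up to the leading-order identification of population moments with $\vartheta_n$-moments. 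Since $T_n(f_j)\to$ a deterministic limit (the $j$th moment of $\Psi(H,c)$) while $p\{T_n(\mathbf f)-\vartheta_n(\mathbf f)\}$ has a Gaussian weak limit by the CLT of~\citep{hu2019aos}, a delta-method argument gives $(\hat r_n-r_n)/p \Rightarrow N(\mu_\infty,\sigma_\infty^2)$ for some constants depending only on $H,c,\tau$; the scaling by $p$ is exactly what is needed because $\hat r_n - r_n$ is itself of order $p$ fluctuating on an $O(1)$ scale after dividing by $p$. The delta-method Jacobian must be checked to be nonzero under Assumption~\ref{Regularity of spectrum} (the denominator $\vartheta_n(f_2)-c\vartheta_n(f_1)^2$ stays bounded away from $0$ since $H$ is supported away from a point mass), so the limit law is genuinely continuous, which validates the remark in the text that $\P(r_n\in\mathcal I_n)=1-\alpha$ exactly in the limit.

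Next I would transfer this to the bootstrap world. The bootstrap statistic $T_{n,1}^*$ in~\eqref{eqn:Ihatdef} is, by construction, the same smooth function applied to $(T_{n,1}^*(f_1),T_{n,1}^*(f_2))$ with centering $(\tilde\vartheta_n(f_1),\tilde\vartheta_n(f_2))$, and Theorem~\ref{thm:main} says the joint law of $p\{T_{n,1}^*(\mathbf f)-\tilde\vartheta_n(\mathbf f)\}$ given $X$ is $d_{\mathrm{LP}}$-close in probability to that of $p\{T_n(\mathbf f)-\vartheta_n(\mathbf f)\}$. Combining this with Theorem~\ref{thm:estimators} — which gives $\tilde H_n\xRightarrow{\P}H$, hence $\tilde\vartheta_n(f_j)\to$ the same moment limits as $\vartheta_n(f_j)$, and also controls the denominator so the map stays well-behaved on a high-probability set — a continuous-mapping / Slutsky argument for the $d_{\mathrm{LP}}$ metric yields $\mathcal L(T_{n,1}^*|X)\overset{\P}{\Rightarrow} N(\mu_\infty,\sigma_\infty^2)$, the same limit as $\mathcal L((\hat r_n-r_n)/p)$. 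Since this limit is continuous, the conditional quantiles $\hat q_{\alpha/2},\hat q_{1-\alpha/2}$ converge in probability to the corresponding quantiles $q_{\alpha/2},q_{1-\alpha/2}$ of $N(\mu_\infty,\sigma_\infty^2)$, and~\eqref{eqn:Hlim1} follows from writing $\P(r_n\in\hat{\mathcal I}_n)=\P(\hat q_{\alpha/2}\le(\hat r_n-r_n)/p\le \hat q_{1-\alpha/2})$ and applying a Polya-type uniform convergence argument (the CDF of $(\hat r_n-r_n)/p$ converges uniformly to a continuous limit). For~\eqref{eqn:Hlim2}, under $\mathsf H_{0,n}$ one has $\frac{\hat r_n}{p}-\e_0 \le \frac{\hat r_n - r_n}{p}$, so $\P(\frac{\hat r_n}{p}-\e_0>\hat q_{1-\alpha})\le \P(\frac{\hat r_n-r_n}{p}>\hat q_{1-\alpha})\to\alpha$ by the same quantile-consistency argument, giving the $\alpha+o(1)$ bound. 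For~\eqref{eqn:Hlim3}, I would use the scale-invariance of $(\hat r_n-r_n)/p$ noted in the text: under $\mathsf H_{0,n}'$ the statistic has the same law as if $\Sigma_n=I_p$, so running Algorithm~\ref{alg:boot} with $\tilde\Sigma_n=I_p$ (equivalently $\tilde H_n=\delta_1$, $H=\delta_1$) reproduces exactly the correct null limit law, and the continuity of that law — which must be verified, e.g. that $\sigma_\infty^2>0$ when $H=\delta_1$, using the explicit variance formula from~\citep{hu2019aos} — delivers the exact level $\alpha$.

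The main obstacle I anticipate is not the delta-method bookkeeping but rather the justification that convergence in the $d_{\mathrm{LP}}$ metric of the \emph{bootstrap} laws to a continuous limit actually translates into convergence of the conditional \emph{quantiles} in probability, uniformly enough to control coverage; this requires care because the bootstrap law is random and one must argue along subsequences or via a characterization of $\overset{\P}{\Rightarrow}$ combined with the fact that the target law is deterministic and continuous with a strictly positive density near its quantiles. A secondary technical point is verifying the non-degeneracy of the limiting variance $\sigma_\infty^2$ — in particular showing the delta-method gradient does not annihilate the Gaussian limit and that $\sigma_\infty^2>0$ even in the sphericity case $H=\delta_1$ — since otherwise the quantile convergence and the "distribution is continuous" hypotheses underlying~\eqref{eqn:Hlim1} and~\eqref{eqn:Hlim3} would fail; this should follow from inspecting the covariance kernel in the elliptical CLT of~\citep{hu2019aos}, which contains an extra $\tau$-dependent term beyond the MP case, but it needs to be written out.
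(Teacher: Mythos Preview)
Your plan matches the paper's proof closely: take $\mathbf f=(t,t^2)$, express $\hat r_n=p\,g_n(T_n(\mathbf f))$ with $g_n(x_1,x_2)=x_1^2/(x_2-c_nx_1^2)$, apply the delta method on top of Theorem~\ref{thm:main} and the underlying CLT, verify non-degeneracy of the limiting variance, and pass from $d_{\mathrm{LP}}$-convergence to quantile convergence via P\'olya. Two places need sharpening. First, the scaling: it is $\hat r_n-r_n$ itself (not divided by $p$) that converges weakly to a non-degenerate Gaussian, since $\hat r_n-r_n=p\{g_n(T_n(\mathbf f))-g_n(\vartheta_n(\mathbf f))\}$ and it is $p\{T_n(\mathbf f)-\vartheta_n(\mathbf f)\}$ that has the Gaussian limit; the factor $p$ in~\eqref{eqn:Ihatdef} merely undoes the $1/p$ built into the definition of $T_{n,1}^*$. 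Second, and more substantively, the link between $r_n$ and $\vartheta_n(\mathbf f)$ is not merely ``leading-order'': one has $r_n=p\,g_n(\vartheta_n(\mathbf f))$ \emph{exactly}, because $\vartheta_n(f_2)=\frac1p\tr(\Sigma_n^2)+\frac1{np}\tr(\Sigma_n)^2$ (Lemma~2.16 of~\cite{yao2015sample}) and the term $-c_nx_1^2$ in $g_n$ cancels the second summand. This exactness is precisely why the bias-correction $-\frac1n\tr(\hat\Sigma_n)^2$ appears in~\eqref{eqn:rhatdef}; if you only had an approximate identity here the centering would be off by an $O(1)$ amount at the same scale as the fluctuations, and the delta method would not deliver the clean limit. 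The paper carries out the non-degeneracy check you anticipate by computing $\nabla g(\vartheta(\mathbf f))^\top K\,\nabla g(\vartheta(\mathbf f))$ explicitly in terms of the moments $\phi_j=\int t^j\,dH(t)$ and showing via Cauchy--Schwarz that it is strictly positive for every $H$ satisfying Assumption~\ref{Regularity of spectrum}, which in particular covers the sphericity case.
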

\noindent\textbf{Remarks.} This result provides a notable complement to Theorem~\ref{thm:main}, because it demonstrates that bootstrap consistency can be established in tasks that are based on a nonlinear spectral statistic, namely $\hat r_n-r_n$. The proof of this result is given in Appendix~\ref{app:r}, where it can be seen that the limiting distribution of $\hat r_n-r_n$ has a very complicated dependence on the moments of $H$, due to the correlation between $\tr(\hat\Sigma_n)$ and $\tr(\hat\Sigma_n^2)$. 
In this way, the proof illustrates the utility of the bootstrap, since the bootstrap enables the user to completely bypass such complexity.

\section{Numerical results}\label{sec:numerical}

This section explores the empirical performance of the proposed bootstrap method in three different ways. Sections~\ref{sec:numlss} and~\ref{sec:numnlss} deal with bootstrap approximations for linear and nonlinear spectral statistics, while Section~\ref{sec:numr} looks at procedures for doing inference on the stable rank parameter $r_n$.

\subsection{Parameter settings}
In our simulations, we generated data from elliptical models that were parameterized as follows. The random variable $\xi_1^2$ was generated using four choices of distributions:
\begin{enumerate}[label=(\roman*).]
    \item Chi-Squared distribution with $p$ degrees of freedom \label{item:xi1}\\[-0.2cm]
    \item Beta-Prime$\left(\frac{p(1+p+8)}{8},\frac{1+p+16}{8}\right)$,\\[-0.2cm]
    \item $(p+4)\textup{Beta}(p/2,2)$,\\[-0.2cm]
    \item $\frac{9p}{10}\textup{F}(p,20)$,
\end{enumerate}
where F$(d_1,d_2)$ denotes an F-distribution with $d_1$ and $d_2$ degrees of freedom.
Note that cases (i), (iii), (iv), correspond respectively to a multivariate Gaussian distribution, a multivariate Pearson type II distribution, and a multivariate t-distribution with 20 degrees of freedom. Also, the numerical values appearing in (i)-(iv) were chosen to ensure the normalization condition $\E(\xi_1^2)=p$.

The population covariance matrix $\Sigma_n$ was selected from five options:
\begin{enumerate}[label=\arabic*.]
    \item The eigenvalues of $\Sigma_n$ are $\lambda_1(\Sigma_n)=\cdots=\lambda_5(\Sigma_n)=\frac 43$ and $\lambda_j(\Sigma_n)=1$ for $j\in\{6,\dots,p\}$. The $p\times p$ matrix of eigenvectors of $\Sigma_n$ is generated from the uniform distribution on orthogonal matrices. \label{item:M1}\\[-0.2cm]
    \item The eigenvalues of $\Sigma_n$ are $\lambda_j(\Sigma_n)=\exp(-j/4)$ for $j\in\{ 1,\dots,20\}$, and $\lambda_{20}(\Sigma_n)=\cdots=\lambda_p(\Sigma_n)$.
    The eigenvectors are the same as in case 1.\\[-0.2cm]
    \item The matrix $\Sigma_n$ has entries of the form $(\Sigma_n)_{ij} = (\frac{1}{10})^{|i-j|}+1\{i=j\}$.\\[-0.2cm]
    \item The matrix $\Sigma_n$ has entries of the form $(\Sigma_n)_{ij} = (\frac{1}{10}) 1\{i\neq j\} +1\{i=j\}$.\\[-0.2cm]
    \item The eigenvalues of $\Sigma_n$ are $\lambda_1(\Sigma_n)=5$ and $\lambda_j(\Sigma_n)=1$ for $j\in\{2,\dots,p\}$. The eigenvectors are the same as in case 1.
\end{enumerate}

\subsection{Linear spectral statistics}\label{sec:numlss}
Our experiments for linear spectral statistics were based on the task of using the proposed bootstrap to estimate three parameters of $\mathcal{L}(p(T_{n}(f)-\vartheta_{n}(f)))$: the mean, standard deviation, and 95th percentile. 
We considered two choices for the function $f$, namely  $f(x)=x^2$ and $f(x)=x-\log(x)-1$.
In the first case, we selected the ratio $p/n$ so that $p/n\in\{0.5, 1, 1.5\}$, and in the second case we used $p/n\in\{0.3, 0.5, 0.7\}$.
\\

\noindent\textbf{Design of experiments.} For each possible choice of $(\xi_1,\Sigma_n,p/n)$, we generated 5000 realizations of the dataset $\x_1,\dots,\x_n$, with $n=400$. These datasets allowed us to compute 5000 realizations of the statistic $p(T_{n}(f)-\vartheta_{n}(f))$, and we treated the empirical mean, standard deviation, and 95th percentile of these 5000 realizations as ground truth for our parameters of interest. In Tables~\ref{tab:x2} and~\ref{tab:log}, the ground truth values are reported in the first row of numbers corresponding to each choice of $\Sigma_n$.

With regard to the bootstrap, we ran Algorithm~\ref{alg:boot} on the first 500 datasets corresponding to each parameter setting. Also, we generated $B=250$ bootstrap samples of the form \smash{$p(T_{n,1}^*(f)-\tilde\vartheta_{n}(f))$} during every run. As a result of these runs, we obtained  500 different bootstrap estimates of the mean, standard deviation, and 95th percentile of $\mathcal{L}(p(T_{n}(f)-\vartheta_{n}(f)))$. In Tables~\ref{tab:x2} and~\ref{tab:log}, we report the empirical mean and standard deviation (in parenthesis) of these 500 estimates in the second row of numbers corresponding to each choice of $\Sigma_n$.

One more detail to mention is related to the computation of $\vartheta_n(f)$ and $\tilde \vartheta_n(f)$. For each parameter setting, we approximated $\vartheta_{n}(f)$ as follows. We averaged 30 realizations of 
$\frac{1}{40p}\sum_{j=1}^{40p}f(\lambda_j(\frac{1}{40n}\sum_{i=1}^{40n} \xi_i^2\mathsf{\Sigma}_n^{1/2}\u_i\u_i\ttop\mathsf{\Sigma}_n^{1/2}))$, 
where $\mathsf{\Sigma}_n=I_{40}\otimes \Sigma_n$ is of size $40p\times 40p$,  
each $\u_i$ was drawn from the uniform distribution on the unit sphere of $\R^{40p}$, and each $\xi_i^2$ was generated as in (i)-(iv), but with $40p$ replacing $p$. 
For the bootstrap samples, we computed one realization of the statistic \smash{$\frac{1}{40p}\sum_{j=1}^{40p}f(\lambda_j(\frac{1}{40n}\sum_{i=1}^{40n} (\xi_i^*)^2\tilde{\mathsf{\Sigma}}_n^{1/2}  \u_i^* (\u_i^*)\ttop \tilde{\mathsf{\Sigma}}_n^{1/2}))$} to approximate $\tilde \vartheta_n(f)$ during every run of Algorithm~\ref{alg:boot}, where $\tilde{\mathsf{\Sigma}}_n =I_{40}\otimes \tilde\Sigma_n$ is of size $40p\times 40p$, each $\u_i^*$ was drawn from the uniform distribution on the unit sphere of $\R^{40p}$, and each $(\xi_i^*)^2$ was drawn from a Gamma distribution with mean $40p$ and variance $40\hat\varsigma_n^2$.\\

\noindent\textbf{Comments on results.} It is easiest to explain the format of the tables with an example: The two entries in the upper right corner of Table~\ref{tab:x2} show that in settings (i) and 1 with $p/n=1.5$, the 95th percentile of $\mathcal{L}(p(T_n(f)-\vartheta_n(f)))$ with $f(x)= x^2$ is equal to 16.48, and the bootstrap estimate for the 95th percentile has a mean (standard deviation) of 16.73 (1.48). Table~\ref{tab:log} presents results for $f(x)=\log(x)-x-1$ in the same format.

In most settings, the bootstrap estimates perform well, with their bias and standard deviation being small in proportion to the parameter being estimated.
However, there are some specific parameter settings that require more attention. These settings involve choice $4$ for $\Sigma_n$, which is an equi-correlation matrix, and choice (iv) for $\xi_1$, which induces a multivariate t-distribution with 20 degrees of freedom. Notably, these choices correspond to settings that \smash{violate} Assumptions~\ref{Data generating model} and~\ref{Regularity of spectrum} of our theoretical results. In the case of the equi-correlation matrix, the bootstrap approximations for $f(x)= x^2$ are less accurate in comparison to other choices of $\Sigma_n$, due to increased variance. By contrast, if  $f(x)=\log(x)-x-1$, then the bootstrap approximations have similar accuracy across all choices of $\Sigma_n$ while holding other parameters fixed.

Lastly, in the case of the multivariate t-distribution, the bootstrap is able to accurately estimate the standard deviation of $\mathcal{L}(p(T_n(f)-\vartheta_n(f)))$ for both choices of $f$, but difficulties arise in estimating the mean and 95th percentile. To understand these mixed results, it is important to recognize the standard deviation does not depend on the centering parameter $\vartheta_n(f)$, whereas the mean and 95th percentile do. Also, the choice of $\vartheta_n(f)$ as a centering parameter is based on the CLT for linear spectral statistics established in~\citep{hu2019aos}, and the assumptions underlying that result are violated by the multivariate t-distribution.

\begin{table}[H]
\centering
\scriptsize
\setlength\tabcolsep{1pt}
\caption{Results for $p(T_n(f)-\vartheta_n(f))$ with $f(x)= x^2$ }
\resizebox{\textwidth}{!}{
\begin{tabular}{lllllllllllll}
\hline
                   &                          & \multicolumn{3}{c}{$p/n=0.5$} &  & \multicolumn{3}{c}{$p/n=1$} &  & \multicolumn{3}{c}{$p/n=1.5$}  \\ \cline{3-5} \cline{7-9} \cline{11-13}
$\xi_1$ & $\Sigma_n$ &mean   & sd  & 95th &&  mean   & sd  & 95th &&  mean   & sd  & 95th  \\ \hline
\specialrule{0em}{1pt}{1pt}
                   & \multirow{2}{*}{1}    &  0.51 &3.31 &6.02         &  &         0.85 &6.11 &10.87        &  &         1.44 &9.15 &16.48        \\
                   &                          &          0.48(0.24) &3.27(0.19) &5.85(0.53)         &  &         0.96(0.44) &6.09(0.35) &11.02(1)        &  &         1.47(0.65) &9.25(0.51) &16.73(1.48)        \\
                  \specialrule{0em}{3pt}{3pt}
                   & \multirow{2}{*}{2}       &          0 &0.11 &0.19        &  &         0.01 &0.11 &0.19        &  &         0 &0.11 &0.2        \\ 
                   &                          &          0(0.01) &0.11(0.01) &0.19(0.03)         &  &         0(0.01) &0.11(0.01) &0.19(0.02)        &  &         0(0.01) &0.11(0.01) &0.2(0.03)        \\  
                   \specialrule{0em}{3pt}{3pt}
\multirow{2}{*}{(i)}  & \multirow{2}{*}{3}       &          2.16 &12.88 &23.12         &  &         4.02 &24.12 &44.03        &  &         5.8 &36.13 &65.72       \\
                   &                          &          1.9(0.88) &12.85(0.73) &23.2(2.07)         &  &         3.94(1.75) &24.27(1.35) &43.95(3.82)        &  &         5.8(2.77) &37.08(2.2) &67.09(6.1) \\
                   \specialrule{0em}{3pt}{3pt}
                   & \multirow{2}{*}{4}       &            0.64 &63.35 &110.6          &  &           2.48 &241.3 &420        &  &           14.11 &540 &952.2       \\
                   &                          &            1.53(4.7) &62.9(9.6) &111.2(19.68)          &  &           4.95(17.04) &245(34.43) &433(70.15)         &  &           11.44(36.41) &541.9(78.44) &958.3(161.6)         \\
                   \specialrule{0em}{3pt}{3pt}
                   & \multirow{2}{*}{5}       &              0.55 &5.03 &9.1           &  &             1.03 &7.49 &13.4         &  &             1.57 &10.42 &18.58       \\
                   &                          &              0.55(0.34) &5.1(0.46) &9.15(1.05)          &  &             1.04(0.53) &7.41(0.48) &13.4(1.22)         &  &             1.55(0.74) &10.35(0.65) &18.61(1.7)       \\
                   \specialrule{0em}{1pt}{1pt}
                   \hline
                   \specialrule{0em}{1pt}{1pt}
                   & \multirow{2}{*}{1}       &          3.57 &6.35 &14.09         &  &         6.69 &11.93 &26.9       &  &         10.25 &18.09 &39.92        \\
                   &                          &          3.47(0.59) &6.36(0.45) &14.03(1.29)         &  &         6.83(1) &11.85(0.76) &26.51(2.32)        &  &         10.26(1.53) &17.99(1.11) &39.92(3.25)        \\
                   \specialrule{0em}{3pt}{3pt}
                   & \multirow{2}{*}{2}       &          0.01 &0.12 &0.21         &  &         0 &0.11 &0.2        &  &         0 &0.11 &0.19        \\ 
                   &                          &          0.01(0.01) &0.12(0.01) &0.2(0.03)         &  &         0.01(0.01) &0.11(0.01) &0.2(0.03)        &  &         0(0.01) &0.11(0.01) &0.2(0.03)        \\ 
                   \specialrule{0em}{3pt}{3pt}
\multirow{2}{*}{(ii)} & \multirow{2}{*}{3}       &          14.16 &25.27 &56         &  &               27.59 &47.57 &105.8                   &  &         41.49 &71.66 &159.7        \\
                   &                          &         13.67(2.24) &25.12(1.76) &55.35(5.1)   &  &         26.94(3.72) &47.09(2.97) &104.9(8.33)        &  &         40.39(6.13) &71.59(4.5) &158.6(13.38)        \\
                   \specialrule{0em}{3pt}{3pt}
                   & \multirow{2}{*}{4}       &            2.26 &65.49 &114.9          &  &           9.27 &246.8 &435.7        &  &           23.51 &539.9 &970.1       \\
                   &                          &            4.7(4.65) &66.35(9.67) &119.7(20.26)          &  &           9.81(16.29) &248.2(38.15) &441.7(75.8)         &  &           19.95(37.73) &548.5(83.74) &971.1(172)         \\
                   \specialrule{0em}{3pt}{3pt}
                   & \multirow{2}{*}{5}       &              3.75 &7.75 &16.66           &  &             7.21 &13.01 &28.58         &  &             10.46 &18.87 &42.02        \\
                   &                          &              3.59(0.65) &7.86(0.62) &16.69(1.58)           &  &             7.03(1.05) &12.87(0.84) &28.29(2.43)         &  &             10.4(1.51) &18.83(1.1) &41.45(3.22)        \\
                   \specialrule{0em}{1pt}{1pt} \hline
                   \specialrule{0em}{1pt}{1pt}
                   & \multirow{2}{*}{1}      &          -0.46 &1.13 &1.44         &  &         -0.95 &2.13 &2.61        &  &         -1.47 &3.12 &3.71        \\
                   &                          &          -0.47(0.08) &1.14(0.07) &1.4(0.17)         &  &         -0.96(0.14) &2.15(0.11) &2.58(0.33)        &  &         -1.43(0.21) &3.17(0.15) &3.81(0.45)        \\
                   \specialrule{0em}{3pt}{3pt}
& \multirow{2}{*}{2}       &          0.01 &0.11 &0.19         &  &         0 &0.11 &0.19        &  &         0 &0.11 &0.19        \\ 
                   &                          &          0(0.01) &0.11(0.01) &0.19(0.03)         &  &         0(0.01) &0.11(0.01) &0.19(0.02)        &  &         0(0.01) &0.11(0.01) &0.19(0.03)        \\ 
                   \specialrule{0em}{3pt}{3pt}
\multirow{2}{*}{(iii)} & \multirow{2}{*}{3}       &          -1.93 &4.49 &5.41         &  &         -3.89 &8.66 &10.62        &  &         -5.62 &13 &15.33        \\
                   &                          &          -1.85(0.33) &4.56(0.25) &5.68(0.69)         &  &         -3.78(0.57) &8.69(0.48) &10.57(1.26)        &  &         -5.7(0.9) &12.87(0.68) &15.52(1.88)        \\
                   \specialrule{0em}{3pt}{3pt}
                   & \multirow{2}{*}{4}       &            1.61 &63.23 &110          &  &         6.1 &242 &420.7        &  &           -6.78 &541.8 &944.3      \\
                   &                          &            0.75(4.21) &63.54(9.08) &111.2(18.37)          &  &         3.06(16.56) &242.5(37.18) &423.6(74.62)        &  &           5.52(35.54) &530.5(80.96) &928.1(161.8)         \\ 
                   \specialrule{0em}{3pt}{3pt}
                   & \multirow{2}{*}{5}       &              -0.52 &3.78 &5.98           &  &             -0.9 &4.45 &6.74         &  &             -1.3 &5.35 &7.75        \\
                   &                          &              -0.48(0.25) &3.83(0.51) &6.14(1.02)           &  &             -0.94(0.32) &4.53(0.55) &6.81(1.17)         &  &             -1.4(0.36) &5.38(0.53) &7.67(1.15)        \\
                   \specialrule{0em}{1pt}{1pt}
                   \hline
                   \specialrule{0em}{1pt}{1pt}
                   & \multirow{2}{*}{1}       &            0.67 &12.57 &21.94          &  &           1.19 &33.98 &58.49        &  &          0.77 &64.69 &114.1        \\
                   &                          &          13.02(2.03) &12.28(1.26) &33.54(4.18)          &  &           50.46(7.66) &33.11(3.74) &105.7(13.79)        &  &          111.2(15.79) &62.29(6.76) &215.5(26.71)         \\
                   \specialrule{0em}{3pt}{3pt}
                    & \multirow{2}{*}{2}       &          0.01 &0.13 &0.24          &  &           0.01 &0.14 &0.25        &  &          0 &0.14 &0.23         \\
                   &                          &          0.01(0.01) &0.13(0.02) &0.24(0.04)          &  &           0.02(0.01) &0.14(0.02) &0.25(0.04)        &  &          0.02(0.01) &0.14(0.02) &0.25(0.04)         \\
                   \specialrule{0em}{3pt}{3pt}
\multirow{2}{*}{(iv)} & \multirow{2}{*}{3}       &          0.68 &48.68 &82.62          &  &           1.3 &136.4 &236.7       &  &          15.26 &260 &457.1         \\
                   &                          &         51.21(8.4) &48.62(5.03) &132.7(17.04)          &  &           198.5(28.4) &131.5(13.71) &418.8(51.54)        &  &          442.9(64.24) &250.9(28.24) &861.1(108)         \\
                   \specialrule{0em}{3pt}{3pt}
                   & \multirow{2}{*}{4}       &          2.75 &73.19 &135.4          &  &             17.25 &281.2 &506.3        &  &         27.98 &603.8 &1097         \\
                   &                          &         14.03(5.46) &73.41(12.32) &142.2(25.81)          &  &             54.28(20.28) &277.7(44.26) &538.9(90.87)         &  &          123.8(46.62) &622.4(97.37) &1217(206.5)         \\
                   \specialrule{0em}{3pt}{3pt}
                   & \multirow{2}{*}{5}       &          0.42 &14.02 &23.86          &  &             2.62 &35.23 &62.08        &  &          -0.69 &66.26 &110.7         \\
                   &                          &         13.2(2.08) &13.82(1.41) &36.31(4.4)          &  &             50.71(7.33) &34.45(3.39) &108.3(12.79)         &  &          111.5(15.1) &64.04(6.65) &219(26.3)         \\
                   \specialrule{0em}{1pt}{1pt} \hline
\end{tabular}
}
\label{tab:x2}
\end{table}

\begin{table}[H]
\centering
\footnotesize
\setlength\tabcolsep{3pt}
\caption{Results for $p(T_n(f)-\vartheta_n(f))$ with $f(x) = x - \log(x)-1$}
\resizebox{\textwidth}{!}{
\begin{tabular}{lllllllllllll}
\hline
                   &                          & \multicolumn{3}{c}{$p/n=0.3$} &  & \multicolumn{3}{c}{$p/n=0.5$} &  & \multicolumn{3}{c}{$p/n=0.7$}  \\ \cline{3-5} \cline{7-9} \cline{11-13} 
$\xi_1$ & $\Sigma_n$ & mean   & sd  & 95th &&  mean   & sd  & 95th &&  mean   & sd  & 95th  \\ \hline
\specialrule{0em}{1pt}{1pt}
                   & \multirow{2}{*}{1}      &          0.18 &0.35 &0.77  &&   0.34 &0.64 &1.4        &  &         0.6 &1 &2.24       \\
                   &                          &          0.17(0.03) &0.35(0.02) &0.74(0.05)   &&    0.34(0.06) &0.63(0.03) &1.37(0.09)         &  &         0.58(0.09) &1.01(0.05) &2.25(0.14)        \\
                   \specialrule{0em}{3pt}{3pt}
\multirow{4}{*}{(i)} & \multirow{2}{*}{2}       &          0.18 &0.83 &1.55         &  &         0.36 &1.16 &2.25        &  &         0.59 &1.53 &3.1        \\ 
                   &                          &          0.19(0.22) &0.79(0.26) &1.49(0.65)         &  &         0.36(0.45) &1.11(0.37) &2.19(1.05)        &  &         0.62(0.7) &1.51(0.44) &3.1(1.42)        \\  
                   \specialrule{0em}{3pt}{3pt}
                   & \multirow{2}{*}{3}       &          0.18 &0.86 &1.62        &  &         0.35 &1.19 &2.27   &  &         0.55 &1.58 &3.12        \\
                   &                          &          0.17(0.06) &0.86(0.05) &1.59(0.14)         &  &          0.33(0.09) &1.19(0.06) &2.3(0.19)      &  &         0.58(0.12) &1.56(0.08) &3.17(0.25)         \\
                   \specialrule{0em}{3pt}{3pt}
                   & \multirow{2}{*}{4}       &            0.16 &0.91 &1.66          &  &           0.32 &1.53 &2.92        &  &           0.57 &2.21 &4.29       \\
                   &                          &            0.17(0.09) &0.91(0.07) &1.71(0.18)          &  &           0.33(0.18) &1.54(0.12) &2.91(0.31)         &  &           0.58(0.31) &2.21(0.16) &4.29(0.49)        \\
                   \specialrule{0em}{3pt}{3pt}
                   & \multirow{2}{*}{5}       &              0.18 &0.44 &0.91           &  &             0.32 &0.7 &1.46         &  &             0.56 &1.04 &2.28        \\
                   &                          &              0.17(0.04) &0.45(0.03) &0.92(0.07)           &  &             0.34(0.06) &0.69(0.03) &1.48(0.11)         &  &             0.59(0.09) &1.05(0.05) &2.31(0.15)       \\
                   \specialrule{0em}{1pt}{1pt} \hline
                   \specialrule{0em}{1pt}{1pt}
                   & \multirow{2}{*}{1}       &          1 &0.37 &1.62         &  &     1.74 &0.65 &2.79        &  &         2.56 &1.02 &4.25         \\
                   &                          &         1.04(0.11) &0.37(0.02) &1.66(0.12)       &  &                   1.79(0.17) &0.66(0.03) &2.87(0.19)        &  &         2.64(0.22) &1.04(0.05) &4.35(0.25)\\
                   \specialrule{0em}{3pt}{3pt}
\multirow{4}{*}{(ii)} & \multirow{2}{*}{2}       &          1.04 &1.52 &3.54         &  &         1.74 &2.02 &5.06        &  &         2.54 &2.49 &6.54        \\ 
                   &                          &         1.05(0.4) &1.51(0.25) &3.54(0.81)         &  &         1.77(0.7) &2.01(0.35) &5.09(1.27)        &  &         2.54(0.96) &2.46(0.41) &6.59(1.63)        \\  
                   \specialrule{0em}{3pt}{3pt}
                   & \multirow{2}{*}{3}       &         1.02 &1.63 &3.78        &  &          1.78 &2.15 &5.34         &  &    2.57 &2.63 &6.94      \\
                   &                          &        1.04(0.16) &1.6(0.12) &3.68(0.36)        &  &          1.8(0.22) &2.11(0.14) &5.31(0.45)       &  &         2.61(0.28) &2.59(0.16) &6.88(0.53) \\ 
                   \specialrule{0em}{3pt}{3pt}
                   & \multirow{2}{*}{4}       &            1.02 &0.94 &2.59          &  &           1.69 &1.57 &4.26        &  &           2.58 &2.22 &6.3       \\
                   &                          &            1.04(0.16) &0.94(0.07) &2.63(0.24)         &  &           1.8(0.31) &1.58(0.12) &4.45(0.43)          &  &           2.63(0.49) &2.25(0.17) &6.4(0.63)        \\
                   \specialrule{0em}{3pt}{3pt}
                   & \multirow{2}{*}{5}       &              1.02 &0.47 &1.8           &  &             1.78 &0.71 &2.96         &  &             2.56 &1.06 &4.31       \\
                   &                          &              1.05(0.13) &0.48(0.03) &1.84(0.16)           &  &             1.8(0.17) &0.72(0.04) &2.98(0.2)         &  &             2.64(0.22) &1.08(0.05) &4.41(0.27)        \\
                   \specialrule{0em}{1pt}{1pt} \hline
                   \specialrule{0em}{1pt}{1pt}
                   & \multirow{2}{*}{1}      &         -0.11 &0.34 &0.45        &  &                   -0.13 &0.62 &0.9         &  &         -0.14 &1 &1.49        \\
                   &                          &         -0.11(0.02) &0.34(0.02) &0.45(0.05)        &  &                   -0.14(0.04) &0.62(0.03) &0.88(0.08)         &  &         -0.09(0.07) &1.01(0.04) &1.57(0.14)        \\
                   \specialrule{0em}{3pt}{3pt}
\multirow{4}{*}{(iii)} & \multirow{2}{*}{2}       &          -0.12 &0.36 &0.49         &  &         -0.14 &0.64 &0.93        &  &         -0.08 &1.01 &1.57         \\ 
                   &                          &          -0.04(0.12) &0.47(0.18) &0.74(0.43)         &  &         0.05(0.29) &0.85(0.28) &1.45(0.74)        &  &         0.18(0.45) &1.22(0.31) &2.19(0.95)      \\ 
                   \specialrule{0em}{3pt}{3pt}
                   & \multirow{2}{*}{3}       &          -0.11 &0.38 &0.52         &  &         -0.14 &0.65 &0.93     &  &             -0.07 &1.03 &1.66   \\
                   &                          &          -0.11(0.03) &0.38(0.02) &0.51(0.05)        &  &         -0.14(0.05) &0.65(0.03) &0.93(0.09)  &  &         -0.09(0.07) &1.03(0.05) &1.6(0.14)\\
                   \specialrule{0em}{3pt}{3pt}
                   & \multirow{2}{*}{4}       &          -0.09 &0.91 &1.43            &  &           -0.12 &1.55 &2.47        &  &          -0.14 &2.21 &3.64       \\
                   &                          &           -0.09(0.07) &0.9(0.08) &1.42(0.17)           &  &           -0.1(0.13) &1.54(0.11) &2.48(0.28)         &  &           0.01(0.21) &2.21(0.16) &3.7(0.4)         \\
                   \specialrule{0em}{3pt}{3pt}
                   & \multirow{2}{*}{5}       &              -0.11 &0.43 &0.62           &  &            -0.15 &0.67 &0.94         &  &             -0.15 &1.06 &1.57        \\
                   &                          &              -0.11(0.03) &0.44(0.03) &0.62(0.07)           &  &             -0.14(0.05) &0.68(0.03) &0.99(0.09)         &  &             -0.09(0.07) &1.04(0.05) &1.64(0.15)        \\
                   \specialrule{0em}{1pt}{1pt}
                   \hline
                   \specialrule{0em}{1pt}{1pt}
                   & \multirow{2}{*}{1}       &            0.19 &0.45 &0.95          &  &           0.37 &0.86 &1.81        &  &          0.68 &1.44 &3.15         \\
                   &                          &          2.35(0.28) &0.44(0.03) &3.07(0.31)          &  &           6.45(0.79) &0.84(0.06) &7.85(0.87)        &  &          12.53(1.51) &1.41(0.1) &14.87(1.64)         \\
                   \specialrule{0em}{3pt}{3pt}
                   & \multirow{2}{*}{2}       &          0.15 &2.15 &3.75          &  &           0.26 &3.41 &5.85        &  &          0.46 &4.77 &8.24         \\
                   &                          &          2.37(0.81) &2.2(0.31) &6.01(1.31)          &  &           6.37(1.52) &3.64(0.43) &12.36(2.23)        &  &          12.6(2.65) &5.16(0.56) &21.06(3.51)         \\
                   \specialrule{0em}{3pt}{3pt}
\multirow{2}{*}{(iv)} & \multirow{2}{*}{3}       &          0.13 &2.42 &4.3          &  &           0.22 &3.96 &6.84        &  &          0.57 &5.75 &10.49         \\
                   &                          &         2.37(0.34) &2.32(0.21) &6.25(0.7)          &  &           6.45(0.86) &3.8(0.35) &12.82(1.43)        &  &          12.67(1.58) &5.31(0.5) &21.49(2.33)        \\
                   \specialrule{0em}{3pt}{3pt}
                   & \multirow{2}{*}{4}       &          0.19 &1.01 &1.89          &  &             0.4 &1.7 &3.29        &  &          0.5 &2.53 &4.75         \\
                   &                          &         2.39(0.35) &0.99(0.09) &4.06(0.44)          &  &             6.46(0.93) &1.72(0.15) &9.37(1.08)         &  &          12.44(1.78) &2.52(0.2) &16.64(1.97)         \\
                   \specialrule{0em}{3pt}{3pt}
                   & \multirow{2}{*}{5}       &          0.19 &0.54 &1.1          &  &             0.35 &0.93 &1.92        &  &          0.62 &1.49 &3.12         \\
                   &                          &         2.36(0.3) &0.54(0.04) &3.25(0.35)          &  &             6.35(0.78) &0.9(0.07) &7.86(0.87)         &  &          12.55(1.55) &1.46(0.1) &14.97(1.7)         \\
                   \specialrule{0em}{1pt}{1pt} \hline
\end{tabular}
}
\label{tab:log}
\end{table}

\noindent\textbf{Breakdown of nonparametric bootstrap.} To highlight one of the motivations for our parametric bootstrap method, we close this subsection with some numerical results exhibiting the breakdown of the standard nonparametric bootstrap, based on sampling with replacement. For the sake of brevity, we focus only on the task of estimating the standard deviation of $p(T_n(f)-\vartheta_n(f))$ in a subset of the previous parameter settings, corresponding to $p/n=1/2$, and $(\xi_n,\Sigma_n)\in \{\text{(i)}\}\times \{1,2,3,4,5\}$. The standard deviation is of particular interest, because it clarifies that the breakdown does not depend on how the statistic is centered.  

The results are presented in Table~\ref{tab:npboot}, showing that the nonparametric bootstrap tends to overestimate the true standard deviation, often by a factor of 2 or more.
For comparison, the corresponding  estimates obtained from the proposed bootstrap method are much more accurate, as can be seen from Tables~\ref{tab:x2} and~\ref{tab:log}.
In addition, the nonparametric bootstrap can have difficulties with nonlinear spectral statistics in settings like those considered here. Numerical results along these lines can be found in the paper~\citep{karoui2019}.

\begin{table}[H]
\caption{Estimation of standard deviation of $p(T_n(f)-\vartheta_n(f))$ with the nonparametric bootstrap}
\begin{tabular}{lllll}
\hline\\[-0.3cm]
$p/n$ & $\xi_1$ &$\Sigma_n$ 
                           & \multicolumn{1}{c}{$f(x)= x^2$} & \multicolumn{1}{c}{$f(x) = x - \log(x)-1$}  \\
                           \specialrule{0em}{1pt}{1pt}
                           \hline
                           \specialrule{0em}{1pt}{1pt}
\multirow{2}{*}{0.5} & \multirow{2}{*}{(i)} & \multirow{2}{*}{1}          &  3.31&  0.64\\
                           &&&  9.1(0.45)&  6.08(0.27)\\
                           \specialrule{0em}{1pt}{1pt}
                           \hline
                           \specialrule{0em}{1pt}{1pt}
\multirow{2}{*}{0.5} & \multirow{2}{*}{(i)} & \multirow{2}{*}{2}          &  0.11&  1.16\\
                           &&& 0.11(0.01) & 6.11(0.28)\\
                           \specialrule{0em}{1pt}{1pt}
                           \hline
                           \specialrule{0em}{1pt}{1pt}
\multirow{2}{*}{0.5} & \multirow{2}{*}{(i)} & \multirow{2}{*}{3}          &  12.88&  1.19\\
                           &&&  35.72(1.83)& 6.23(0.31) \\
                           \specialrule{0em}{1pt}{1pt}
                           \hline
                           \specialrule{0em}{1pt}{1pt}
\multirow{2}{*}{0.5} & \multirow{2}{*}{(i)} & \multirow{2}{*}{4}          &  63.35&  1.53\\
                           &&&  66.65(10.63)&  6.25(0.3) \\
                           \specialrule{0em}{1pt}{1pt}
                           \hline
                           \specialrule{0em}{1pt}{1pt}
\multirow{2}{*}{0.5} & \multirow{2}{*}{(i)} & \multirow{2}{*}{5}          &  5.03&  0.7\\
                           &&& 10.35(0.63) & 6.11(0.27)\\
                           \hline
\end{tabular}
\label{tab:npboot}
\end{table}

\subsection{Nonlinear spectral statistics}\label{sec:numnlss}

This subsection looks at how well the proposed bootstrap handles nonlinear spectral statistics. The statistics under consideration here are the largest sample eigenvalue $\lambda_1(\hat\Sigma_n)$, and the leading eigengap \smash{$\lambda_1(\hat{\Sigma}_n) - \lambda_2(\hat{\Sigma}_n)$}. The underlying experiments for these statistics were designed in the same manner as in Section~\ref{sec:numlss}, and Tables~\ref{tab:max} and~\ref{tab:gap} display the results in the same format as Tables~\ref{tab:x2} and~\ref{tab:log}. To a large extent, the favorable patterns that were noted in the results for linear spectral statistics are actually enhanced in the results for nonlinear spectral statistics---in the sense that the bias and standard deviations of the bootstrap estimates are generally smaller here than before.
Also, in contrast to linear spectral statistics, the results for nonlinear spectral statistics are relatively unaffected by choice 4 for $\Sigma_n$.
Lastly, under choice (iv) for $\xi_1$, the accuracy for nonlinear spectral statistics is reduced in comparison to other choices of $\xi_1$. Nevertheless, the reduction in accuracy under choice (iv) is less pronounced here than it was in the context of linear spectral statistics.
This makes sense in light of the fact that the statistics $\lambda_1(\hat\Sigma_n)$ and $\lambda_1(\hat{\Sigma}_n) - \lambda_2(\hat{\Sigma}_n)$ do not involve the centering parameter $\vartheta_n(f)$ that was used for linear spectral statistics. (Recall from Section~\ref{sec:numlss} that the reduced accuracy for linear spectral statistics under choice (iv) appeared to be related to $\vartheta_n(f)$.)

\vspace{-.2cm}

\begin{table}[]\small
\setlength\tabcolsep{3pt}
\caption{Results for $\lambda_1(\hat{\Sigma}_n)$}
\resizebox{\textwidth}{!}{
\begin{tabular}{lllllllllllll}
\hline
                   &                          & \multicolumn{3}{c}{$p/n=0.5$} &  & \multicolumn{3}{c}{$p/n=1$} &  & \multicolumn{3}{c}{$p/n=1.5$}  \\ \cline{3-5} \cline{7-9} \cline{11-13} 
$\xi_1$ & $\Sigma_n$ & mean   & sd  & 95th &&  mean   & sd  & 95th &&  mean   & sd  & 95th  \\ \hline
\specialrule{0em}{1pt}{1pt}
                   & \multirow{2}{*}{1}      &          2.9 &0.06 &3         &  &         3.96 &0.06 &4.07        &  &         4.9 &0.06 &5.01        \\
                   &                          &          2.93(0.05) &0.06(0.01) &3.04(0.07)         &  &         3.99(0.04) &0.06(0.01) &4.1(0.06)        &  &         4.93(0.04) &0.07(0.01) &5.05(0.06)        \\
                   \specialrule{0em}{3pt}{3pt}
                   & \multirow{2}{*}{2}       &          0.8 &0.05 &0.89        &  &         0.8 &0.05 &0.89        &  &         0.81 &0.05 &0.9        \\ 
                   &                          &          0.8(0.05) &0.05(0.01) &0.89(0.06)         &  &         0.8(0.05) &0.05(0.01) &0.89(0.06)        &  &         0.81(0.05) &0.05(0.01) &0.9(0.06)        \\ 
                   \specialrule{0em}{3pt}{3pt}
\multirow{2}{*}{(i)}& \multirow{2}{*}{3}       &          5.76 &0.11 &5.95         &  &         7.91 &0.12 &8.12        &  &         9.81 &0.13 &10.03        \\
                   &                          &          5.81(0.08) &0.12(0.02) &6.02(0.11)         &  &         7.98(0.09) &0.13(0.02) &8.21(0.13)        &  &         9.87(0.1) &0.14(0.03) &10.12(0.14)        \\ 
                   \specialrule{0em}{3pt}{3pt}
                   & \multirow{2}{*}{4}       &            21.35 &1.47 &23.81          &  &           41.79 &2.87 &46.62        &  &           62.34 &4.31 &69.6       \\
                   &                          &            21.21(1.5) &1.47(0.13) &23.68(1.71)          &  &           41.8(2.73) &2.9(0.23) &46.7(3.09)         &  &           62.34(4.19) &4.3(0.35) &69.62(4.73)         \\
                   \specialrule{0em}{3pt}{3pt}
                   & \multirow{2}{*}{5}       &              5.62 &0.35 &6.21           &  &             6.25 &0.34 &6.81         &  &             6.88 &0.33 &7.44        \\
                   &                          &              5.61(0.33) &0.35(0.03) &6.2(0.38)           &  &             6.22(0.33) &0.34(0.03) &6.79(0.37)         &  &             6.87(0.33) &0.33(0.03) &7.43(0.38)        \\
                   \specialrule{0em}{1pt}{1pt} \hline
                   \specialrule{0em}{1pt}{1pt}
                   & \multirow{2}{*}{1}       &          2.96 &0.07 &3.08         &  &          4.02 &0.07 &4.14        &  &         4.96 &0.07 &5.08        \\
                   &                          &          3.03(0.07) &0.07(0.01) &3.15(0.09)         &  &          4.09(0.07) &0.07(0.01) &4.22(0.09)         &  &         5.04(0.06) &0.08(0.01) &5.17(0.08)        \\
                   \specialrule{0em}{3pt}{3pt}
                   & \multirow{2}{*}{2}       &          0.8 &0.06 &0.89         &  &         0.8 &0.05 &0.89        &  &         0.8 &0.05 &0.9        \\ 
                   &                          &          0.8(0.05) &0.05(0.01) &0.89(0.06)         &  &         0.8(0.06) &0.05(0.01) &0.89(0.06)        &  &         0.81(0.06) &0.05(0.01) &0.9(0.06)        \\ 
                   \specialrule{0em}{3pt}{3pt}
\multirow{2}{*}{(ii)}& \multirow{2}{*}{3}       &          5.88 &0.13 &6.11        &  &         8.04 &0.14 &8.26        &  &         9.93 &0.14 &10.16        \\
                   &                          &          6.03(0.13) &0.14(0.02) &6.27(0.16)         &  &         8.2(0.12) &0.15(0.02) &8.47(0.15)        &  &         10.1(0.12) &0.16(0.02) &10.37(0.16)        \\
                   \specialrule{0em}{3pt}{3pt}
                   & \multirow{2}{*}{4}       &            21.34 &1.51 &23.86          &  &           41.8 &2.92 &46.72        &  &           62.32 &4.3 &69.65       \\
                   &                          &            21.44(1.48) &1.51(0.13) &23.99(1.69)         &  &           41.87(3) &2.92(0.25) &46.79(3.38)         &  &          62.37(4.37) &4.34(0.38) &69.67(4.99)         \\
                   \specialrule{0em}{3pt}{3pt}
                   & \multirow{2}{*}{5}       &              5.65 &0.35 &6.24           &  &             6.28 &0.35 &6.86         &  &             6.9 &0.34 &7.48        \\
                   &                          &              5.67(0.36) &0.36(0.03) &6.27(0.41)           &  &             6.31(0.35) &0.35(0.03) &6.9(0.4)         &  &             6.91(0.35) &0.34(0.03) &7.48(0.4)        \\
                   \specialrule{0em}{1pt}{1pt} \hline
                   \specialrule{0em}{1pt}{1pt}
                   & \multirow{2}{*}{1}      &          2.88 &0.05 &2.97        &  &          3.94 &0.06 &4.04         &  &         4.88 &0.06 &4.99        \\
                   &                          &          2.89(0.03) &0.06(0.01) &2.98(0.05)         &  &          3.95(0.03) &0.06(0.01) &4.06(0.05)        &  &         4.89(0.03) &0.06(0.01) &5(0.05)        \\
                   \specialrule{0em}{3pt}{3pt}
                   & \multirow{2}{*}{2}       &          0.8 &0.05 &0.89         &  &         0.8 &0.05 &0.89        &  &         0.8 &0.05 &0.9        \\
                   &                          &          0.8(0.05) &0.05(0.01) &0.89(0.06)         &  &         0.8(0.05) &0.05(0.01) &0.89(0.06)        &  &         0.8(0.05) &0.05(0.01) &0.89(0.06)        \\
                   \specialrule{0em}{3pt}{3pt}
\multirow{2}{*}{(iii)} & \multirow{2}{*}{3}       &          5.72 &0.1 &5.9         &  &         7.88 &0.12 &8.08        &  &         9.77 &0.12 &9.99        \\
                   &                          &          5.74(0.07) &0.11(0.02) &5.94(0.11)         &  &         7.9(0.08) &0.12(0.02) &8.12(0.13)        &  &            9.79(0.08) &0.13(0.02) &10.01(0.11)     \\ 
                   \specialrule{0em}{3pt}{3pt}
                   & \multirow{2}{*}{4}       &            21.39 &1.47 &23.84          &  &           41.83 &2.88 &46.61        &  &           62.14 &4.34 &69.51       \\
                   &                          &            21.39(1.38) &1.47(0.12) &23.87(1.56)          &  &           41.74(2.95) &2.87(0.25) &46.57(3.35)         &  &           61.71(4.39) &4.25(0.37) &68.87(4.96)        \\
                   \specialrule{0em}{3pt}{3pt}
                   & \multirow{2}{*}{5}       &              5.61 &0.34 &6.17           &  &             6.24 &0.34 &6.82         &  &             6.87 &0.34 &7.44        \\
                   &                          &              5.59(0.34) &0.34(0.03) &6.17(0.38)           &  &             6.23(0.35) &0.34(0.03) &6.8(0.39)         &  &             6.88(0.34) &0.34(0.03) &7.45(0.39)       \\
                   \specialrule{0em}{1pt}{1pt}
                   \hline
                   \specialrule{0em}{1pt}{1pt}
                   & \multirow{2}{*}{1}       &            3.24 &0.18 &3.55          &  &           4.77 &0.42 &5.48        &  &          6.27 &0.67 &7.51         \\
                   &                          &         3.39(0.22) &0.12(0.03) &3.59(0.27)          &  &           5(0.46) &0.18(0.05) &5.31(0.54)        &  &          6.59(0.64) &0.25(0.07) &7.03(0.75)         \\
                   \specialrule{0em}{3pt}{3pt}
                   & \multirow{2}{*}{2}       &          0.8 &0.06 &0.9          &  &           0.8 &0.06 &0.91        &  &          0.81 &0.06 &0.91         \\
                   &                          &          0.81(0.06) &0.06(0.01) &0.91(0.07)          &  &           0.81(0.06) &0.06(0.01) &0.91(0.07)        &  &          0.81(0.06) &0.06(0.01) &0.91(0.07)         \\
                   \specialrule{0em}{3pt}{3pt}
\multirow{2}{*}{(iv)} & \multirow{2}{*}{3}       &          6.42 &0.36 &7.03          &  &           9.51 &0.81 &10.97        &  &          12.52 &1.33 &15.13         \\
                   &                          &         6.77(0.4) &0.24(0.06) &7.18(0.49)          &  &           10.17(0.81) &0.37(0.1) &10.82(0.97)        &  &          13.55(1.35) &0.54(0.16) &14.48(1.6)         \\
                   \specialrule{0em}{3pt}{3pt}
                   & \multirow{2}{*}{4}       &          21.45 &1.62 &24.28         &  &             42.01 &3.23 &47.49        &  &          62.59 &4.68 &70.57         \\
                   &                          &         21.52(1.71) &1.62(0.16) &24.26(1.96)          &  &             42.08(3.22) &3.17(0.29) &47.44(3.66)         &  &          63.05(4.62) &4.76(0.43) &71.15(5.29)         \\
                   \specialrule{0em}{3pt}{3pt}
                   & \multirow{2}{*}{5}       &          5.71 &0.39 &6.37          &  &             6.48 &0.4 &7.18        &  &          7.33 &0.51 &8.14         \\
                   &                          &         5.79(0.39) &0.39(0.04) &6.45(0.45)          &  &             6.66(0.42) &0.39(0.04) &7.33(0.49)         &  &          7.69(0.45) &0.39(0.05) &8.37(0.53)         \\
                   \specialrule{0em}{1pt}{1pt} \hline
\end{tabular}
}
\label{tab:max}
\end{table}

\begin{table}[H]\small
\setlength\tabcolsep{3pt}
\caption{Results for $ \lambda_1(\hat{\Sigma}_n) - \lambda_2(\hat{\Sigma}_n)$}
\resizebox{\textwidth}{!}{
	\begin{tabular}{lllllllllllll}
		\hline
		&                          & \multicolumn{3}{c}{$p/n=0.5$} &  & \multicolumn{3}{c}{$p/n=1$} &  & \multicolumn{3}{c}{$p/n=1.5$}  \\ \cline{3-5} \cline{7-9} \cline{11-13}
		$\xi_1$ & $\Sigma_n$ & mean   & sd  & 95th &&  mean   & sd  & 95th &&  mean   & sd  & 95th  \\ \hline
		\specialrule{0em}{1pt}{1pt}
		& \multirow{2}{*}{1}      &          0.09 &0.05 &0.18         &  &         0.1 &0.05 &0.19        &  &         0.1 &0.06 &0.21        \\
		&                          &          0.1(0.02) &0.06(0.01) &0.2(0.05)         &  &        0.1(0.02) &0.06(0.01) &0.21(0.04)        &  &         0.11(0.02) &0.06(0.01) &0.22(0.04)        \\
		\specialrule{0em}{3pt}{3pt}
		& \multirow{2}{*}{2}       &          0.18 &0.07 &0.3         &  &         0.18 &0.07 &0.29        &  &         0.18 &0.07 &0.29        \\
		&                          &          0.19(0.06) &0.06(0.01) &0.29(0.07)         &  &         0.18(0.06) &0.06(0.01) &0.29(0.07)        &  &         0.19(0.06) &0.06(0.01) &0.3(0.07)        \\
		\specialrule{0em}{3pt}{3pt}
\multirow{2}{*}{(i)}& \multirow{2}{*}{3}       &          0.18 &0.1 &0.35         &  &         0.19 &0.1 &0.38        &  &         0.2 &0.11 &0.41        \\
		&                          &          0.2(0.04) &0.11(0.02) &0.4(0.08)         &  &         0.22(0.05) &0.12(0.03) &0.44(0.1)        &  &         0.23(0.05) &0.13(0.03) &0.47(0.11)        \\ 
       \specialrule{0em}{3pt}{3pt}
       & \multirow{2}{*}{4}       &            18.78 &1.47 &21.26          &  &           38.26 &2.87 &43.07        &  &           57.95 &4.31 &65.22       \\
       &                          &            18.62(1.5) &1.47(0.13) &21.09(1.71)          &  &           38.24(2.74) &2.9(0.23) &43.15(3.09)        &  &           57.94(4.19) &4.3(0.35) &65.22(4.73)        \\
       \specialrule{0em}{3pt}{3pt}
                   & \multirow{2}{*}{5}       &              2.78 &0.35 &3.38          &  &             2.32 &0.35 &2.9         &  &             2.01 &0.34 &2.58        \\
                   &                          &              2.75(0.33) &0.35(0.03) &3.34(0.38)           &  &             2.28(0.33) &0.34(0.03) &2.85(0.37)         &  &             1.98(0.33) &0.34(0.03) &2.55(0.38)        \\
       \specialrule{0em}{1pt}{1pt} \hline
		\specialrule{0em}{1pt}{1pt}
		& \multirow{2}{*}{1}       &          0.1 &0.05 &0.2         &  &         0.1 &0.05 &0.2        &  &         0.11 &0.06 &0.21        \\
		&                          &          0.11(0.02) &0.06(0.01) &0.22(0.05)         &  &         0.11(0.02) &0.06(0.01) &0.23(0.05)        &  &         0.12(0.02) &0.07(0.01) &0.24(0.04)        \\
		\specialrule{0em}{3pt}{3pt}
		& \multirow{2}{*}{2}       &          0.18 &0.07 &0.29         &  &         0.18 &0.07 &0.29        &  &         0.18 &0.07 &0.29        \\
		&                          &          0.18(0.06) &0.06(0.01) &0.29(0.07)         &  &         0.19(0.06) &0.06(0.01) &0.29(0.07)        &  &         0.19(0.06) &0.06(0.01) &0.3(0.07)        \\
		\specialrule{0em}{3pt}{3pt}
\multirow{2}{*}{(ii)}& \multirow{2}{*}{3}       &          0.19 &0.1 &0.38         &  &         0.2 &0.11 &0.41        &  &         0.21 &0.11 &0.42        \\
		&                          &          0.22(0.04) &0.12(0.02) &0.44(0.09)         &  &         0.23(0.05) &0.13(0.03) &0.48(0.1)        &  &         0.24(0.04) &0.13(0.03) &0.49(0.1)        \\ 
       \specialrule{0em}{3pt}{3pt}
       & \multirow{2}{*}{4}       &            18.72 &1.5 &21.24          &  &           38.21 &2.92 &43.11        &  &           57.88 &4.3 &65.19       \\
       &                          &            18.76(1.48) &1.51(0.13) &21.3(1.68)          &  &           38.22(2.99) &2.92(0.25) &43.13(3.38)        &  &           57.87(4.37) &4.34(0.38) &65.17(4.98)         \\
       \specialrule{0em}{3pt}{3pt}
                   & \multirow{2}{*}{5}       &              2.74 &0.36 &3.34           &  &             2.29 &0.35 &2.88         &  &             1.97 &0.35 &2.55        \\
                   &                          &              2.68(0.36) &0.36(0.03) &3.29(0.41)           &  &             2.24(0.35) &0.35(0.03) &2.84(0.4)         &  &             1.89(0.35) &0.34(0.03) &2.47(0.4)        \\
       \specialrule{0em}{1pt}{1pt} \hline
		\specialrule{0em}{1pt}{1pt}
		& \multirow{2}{*}{1}      &                  0.09 &0.05 &0.17  &&         0.09 &0.05 &0.19        &  &         0.1 &0.06 &0.21        \\
		&                          &          0.09(0.02) &0.05(0.01) &0.19(0.04)         &  &         0.1(0.02) &0.05(0.01) &0.2(0.03)        &  &         0.11(0.01) &0.06(0.01) &0.21(0.03)        \\
		\specialrule{0em}{3pt}{3pt}
		& \multirow{2}{*}{2}       &          0.18 &0.07 &0.29         &  &         0.18 &0.07 &0.29        &  &         0.18 &0.07 &0.29        \\
		&                          &          0.19(0.06) &0.06(0.01) &0.3(0.07)         &  &         0.18(0.06) &0.06(0.01) &0.29(0.07)        &  &    0.18(0.06) &0.06(0.01) &0.29(0.07)             \\
		\specialrule{0em}{3pt}{3pt}
\multirow{2}{*}{(iii)} & \multirow{2}{*}{3}       &          0.17 &0.09 &0.34         &  &         0.19 &0.1 &0.38        &  &         0.2 &0.11 &0.41        \\
		&                          &          0.19(0.04) &0.1(0.02) &0.38(0.08)         &  &         0.21(0.05) &0.11(0.03) &0.42(0.1)        &  &         0.18(0.06) &0.06(0.01) &0.29(0.07)        \\ 
        \specialrule{0em}{3pt}{3pt}
       & \multirow{2}{*}{4}      &            18.84 &1.48 &21.29          &  &           38.31 &2.88 &43.1        &  &           57.77 &4.34 &65.18       \\
       &                          &            18.83(1.38) &1.47(0.12) &21.32(1.57)         &  &           38.21(2.95) &2.88(0.25) &43.05(3.35)         &  &           57.32(4.39) &4.25(0.37) &64.49(4.96)        \\
       \specialrule{0em}{3pt}{3pt}
                   & \multirow{2}{*}{5}       &              2.78 &0.34 &3.35           &  &                      2.34 &0.35 &2.92    & &         2.02 &0.34 &2.6        \\
                   &                          &              2.75(0.34) &0.35(0.03) &3.34(0.38)           &  &             2.31(0.35) &0.34(0.03) &2.9(0.39)         &  &             2.03(0.34) &0.34(0.03) &2.6(0.39)       \\
                   \specialrule{0em}{1pt}{1pt}
                   \hline
                   \specialrule{0em}{1pt}{1pt}
                   & \multirow{2}{*}{1}       &            0.17 &0.14 &0.39          &  &           0.31 &0.34 &0.91        &  &          0.5 &0.54 &1.5         \\
                   &                          &          0.17(0.08) &0.09(0.03) &0.34(0.14)          &  &           0.27(0.22) &0.14(0.06) &0.53(0.32)        &  &          0.4(0.33) &0.21(0.09) &0.78(0.47)         \\
                   \specialrule{0em}{3pt}{3pt}
                   & \multirow{2}{*}{2}       &          0.19 &0.07 &0.3          &  &           0.18 &0.07 &0.3        &  &          0.19 &0.07 &0.3         \\
                   &                          &          0.19(0.06) &0.07(0.01) &0.31(0.07)          &  &           0.19(0.06) &0.07(0.01) &0.3(0.08)        &  &          0.19(0.06) &0.07(0.01) &0.3(0.07)         \\
                   \specialrule{0em}{3pt}{3pt}
\multirow{2}{*}{(iv)} & \multirow{2}{*}{3}       &          0.32 &0.28 &0.75          &  &           0.62 &0.64 &1.73        &  &          0.99 &1.06 &2.97         \\
                   &                          &         0.35(0.17) &0.19(0.07) &0.69(0.28)          &  &           0.6(0.44) &0.3(0.13) &1.15(0.64)        &  &          0.98(0.87) &0.45(0.21) &1.8(1.19)         \\
                   \specialrule{0em}{3pt}{3pt}
                   & \multirow{2}{*}{4}       &          18.59 &1.61 &21.41          &  &             37.75 &3.22 &43.23        &  &          57.01 &4.69 &65         \\
                   &                          &         18.45(1.69) &1.61(0.16) &21.18(1.93)          &  &             37.46(3.21) &3.16(0.29) &42.78(3.62)         &  &          56.94(4.6) &4.74(0.43) &64.99(5.25)         \\
                   \specialrule{0em}{3pt}{3pt}
                   & \multirow{2}{*}{5}       &          2.54 &0.41 &3.21          &  &             1.79 &0.47 &2.53       &  &          1.24 &0.49 &2.03         \\
                   &                          &         2.4(0.39) &0.39(0.04) &3.06(0.45)          &  &             1.58(0.41) &0.4(0.05) &2.27(0.47)         &  &          1.09(0.37) &0.4(0.06) &1.78(0.45)         \\
       \specialrule{0em}{1pt}{1pt} \hline
	\end{tabular}
	}
\label{tab:gap}
\end{table}

\subsection{Inference on stable rank}\label{sec:numr}

\noindent\textbf{Confidence interval.} Table~\ref{tab:CI} presents numerical results on the width and coverage probability of the bootstrap confidence interval $\hat{\mathcal{I}}$ (defined in~\eqref{eqn:Ihatdef}) for the stable rank parameter $r_n$. The results were computed using experiments based on the same design as in Section~\ref{sec:numlss}, with every interval having a nominal coverage probability of $95\%$. Due to the fact that the width of the interval scales with the value of $r_n$, we report the width as a percentage of $r_n$. To illustrate a particular example, the upper right corner of Table~\ref{tab:CI} shows that in settings 1 and (i) with $p/n=1.5$, the average width of the interval over repeated experiments is 1.97\% of $r_n$, with a standard deviation of 0.12\%. Under choices 1, 3, and 5 for $\Sigma_n$, the width is typically quite small as a percentage of $r_n$. By contrast, the percentage is larger in cases 2 and 4, which seems to occur because $r_n$ is smaller in these cases compared to 1, 3, and 5. Lastly, to consider coverage probability, the table shows good general agreement with the nominal level. Indeed, among the 60 distinct settings, there are only a few where the coverage probability differs from the nominal level by more than 2\%.

\begin{table}[H]
\small
\caption{Results for the stable rank confidence interval (95\% nominal coverage probability)}
\resizebox{\textwidth}{!}{
\setlength\tabcolsep{3pt}
\begin{tabular}{llllllllll}
\hline
                   &                          & \multicolumn{2}{c}{$p/n=0.5$} &  & \multicolumn{2}{c}{$p/n=1$} &  & \multicolumn{2}{c}{$p/n=1.5$} \\ \cline{3-4} \cline{6-7} \cline{9-10} 
$\xi_1$ & $\Sigma_n$ & width$/r_n$ $(\%)$   &  coverage $(\%)$ &&  width$/r_n$ $(\%)$   &  coverage $(\%)$  &&  width$/r_n$ $(\%)$   &  coverage $(\%)$   \\ \hline
 \specialrule{0em}{1pt}{1pt} 
\multirow{5}{*}{(i)} & 1      &            2.00(0.12) &95.20                     &  &                1.96(0.12) &95.40                &  &                     1.97(0.12) &93.80             \\
                   & 2       &            14.53(1.19) &95.40                     &  &                16.99(1.32) &91.60                &  &                     18.7(1.34) &95.40             \\
                  &  3       &            2.00(0.12) &95.00                     &  &                1.98(0.12) &97.40                &  &                     1.97(0.12) &95.40             \\
                  & 4       &            34.68(2.87) &94.00                     &  &                41.55(4.30) &94.00                &  &                     43.82(5.33) &93.80             \\
                  & 5       &            5.18(0.65) &94.80                     &  &                3.24(0.35) &95.20                &  &                     2.64(0.24) &94.60             \\
                   \specialrule{0em}{1pt}{1pt}\hline
                   \specialrule{0em}{1pt}{1pt}
\multirow{5}{*}{(ii)} & 1      &            2.07(0.12) &94.00                     &  &                2.01(0.13) &92.80                &  &                     1.98(0.12) &93.80             \\
                   & 2       &            14.68(1.17) &94.60                     &  &                17.10(1.34) &95.00                &  &                     18.75(1.42) &93.20             \\
                  &  3       &            2.08(0.13) &95.20                     &  &                2.02(0.12) &95.20                &  &                     2.00(0.13) &93.40             \\
                  & 4       &            34.86(2.76) &93.40                     &  &                41.47(4.09) &94.80                &  &                    44.99(5.34) &94.00             \\
                  & 5       &            5.12(0.63) &94.00                     &  &                3.22(0.33) &93.40                &  &                     2.63(0.24) &94.00             \\  
                   \specialrule{0em}{1pt}{1pt}
                   \hline
                    \specialrule{0em}{1pt}{1pt}
\multirow{5}{*}{(iii)} & 1      &          1.96(0.12) &94.40                                 &  &                1.96(0.11) &96.00                &  &                     1.96(0.11) &95.80             \\
                   & 2       &            14.45(1.11) &93.40                     &  &                17.01(1.27) &94.60                &  &                     18.57(1.42) &94.00             \\
                  &  3       &            1.97(0.12) &95.00                     &  &                1.96(0.12) &93.20                &  &                     1.96(0.12) &93.00             \\
                  & 4       &            34.71(2.67) &93.80                     &  &                41.49(4.38) &92.80                &  &                     44.37(5.21) &92.40             \\
                  & 5       &            5.17(0.64) &94.20                     &  &                3.22(0.33) &95.80                &  &                     2.61(0.24) &95.20             \\
                  \specialrule{0em}{1pt}{1pt}
                   \hline
                    \specialrule{0em}{1pt}{1pt}
\multirow{5}{*}{(iv)} & 1      &            2.38(0.21) &95.00                     &  &                2.32(0.19) &94.80                &  &                     2.31(0.25) &93.80             \\
                   & 2       &            15.31(1.19) &97.40                     &  &                17.86(1.42) &93.80                &  &                     19.53(1.49) &94.00             \\
                  &  3       &            2.4(0.20) &96.00                     &  &               2.37(0.24) &95.00                &  &                     2.33(0.27) &94.20             \\
                  & 4       &            35.43(2.90) &93.60                     &  &                42.01(4.33) &93.60                &  &                     44.59(5.06) &93.40             \\
                  & 5       &            5.20(0.64) &93.20                     &  &                3.26(0.33) &93.00               &  &                     2.73(0.29) &93.40             \\
                   \specialrule{0em}{1pt}{1pt}
                   \hline
\end{tabular}
}
\label{tab:CI}
\end{table}

\noindent\textbf{Stable rank test.} Here, we discuss numerical results for an instance of the testing problem~\eqref{eqn:H0} given by
\begin{equation}\label{eqn:numericalH}
\mathsf{H}_{0,n}: \ts\frac{r_n}{p} \leq 0.1  \ \ \ \text{\ \ \ \ \ \ \  vs. \ \ \ \ \ \ \   } \ \ \  \mathsf{H}_{1,n}: \ts\frac{r_n}{p} > 0.1 .
\end{equation}
As a way to unify the study of level and power, we modified the experiments from Section~\ref{sec:numlss} as follows. We rescaled the leading 15 eigenvalues in setting (1) to tune the ratio $r_n/p$ within the grid $\{0.0980, 0.0985,\dots, 0.1045,0.1050\}$. More precisely, the eigenvalues of $\Sigma_n$ were taken to be of the form $\lambda_j(\Sigma_n)=4s/3$ for $j\in\{1,\dots,5\}$, $\lambda_j(\Sigma_n)=s$ for $j\in\{6,\dots,15\}$ and $\lambda_j(\Sigma_n)=1$ for $j\in\{16,\dots,p\}$, with different values of $s$ being chosen to produce values of $r_n/p$ matching the stated gridpoints. Hence, gridpoints less than 0.1 correspond to $\mathsf{H}_{n,0}$, and gridpoints larger than $0.1$ correspond to $\mathsf{H}_{1,n}$. 

At each gridpoint, we performed experiments based on the design of those in  Section~\ref{sec:numlss}, allowing for $p/n$ to take the values $0.5, 1.0, 1.5$, and allowing the distribution of $\xi_1$ to be of the types (i), (ii), (iii), and (iv). For each such setting, we applied the relevant bootstrap test from Section~\ref{sec:testrank} at a 5\% nominal level to 500 datasets, and then we recorded the rejection rate over the 500 trials. Figures~\ref{fig:chisq},~\ref{fig:betaprime},~\ref{fig:beta}, and~\ref{fig:t} display the results by plotting the rejection rate as a function of the ratio $r_n/p$. The separate figures correspond to choices of the distribution of $\xi_1$, and within each figure, three colored curves correspond to choices of $p/n$, as indicated in the legend. In addition, all the plots include a dashed horizontal line to signify the 5\% nominal level.

An important feature that is shared by all the curves is that they stay below the nominal 5\% level for essentially every value of $r_n/p< 0.1$, which corroborates our theoretical bound~\eqref{eqn:Hlim2} in Theorem~\ref{thm:r}. Furthermore, when $r_n/p=0.1$, the curves are mostly quite close to 5\%, demonstrating that the testing procedure is well calibrated. For values of $r_n/p>0.1$, the procedure exhibits substantial power, with the curve corresponding to $p/n=0.5$ achieving approximately 100\% power at $r_n/p=0.105$ for every choice of $\xi_1$. In the cases of $p/n\in\{1.0, 1.5\}$, the procedure still retains power, but with some diminution, as might be anticipated in settings of higher dimension.

\begin{figure}[H]	
\centering
\vspace{0.1cm}
	\DeclareGraphicsExtensions{.eps}
	\begin{overpic}[width=0.45\textwidth]{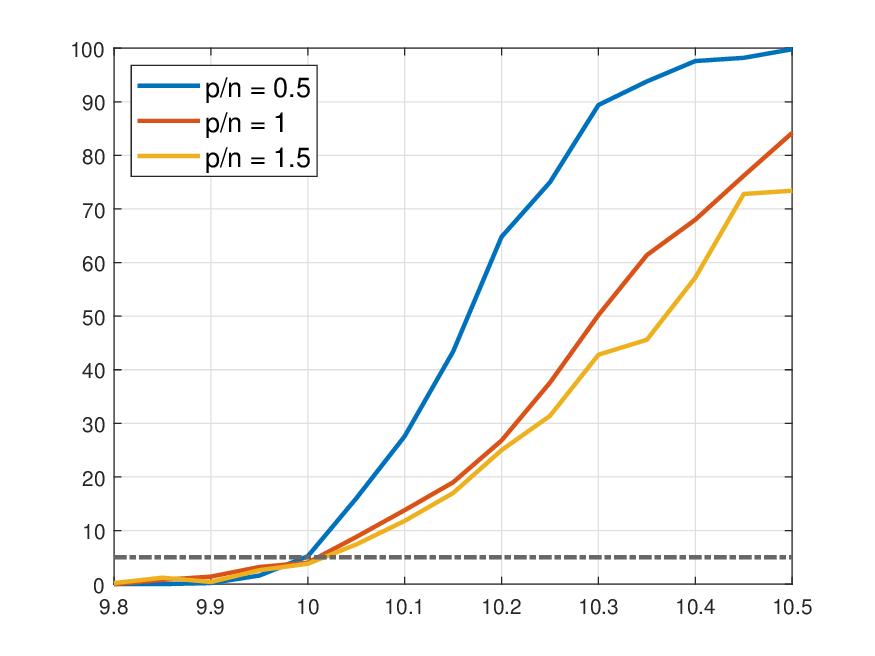} 
\put(22,75){ rejection rate vs. $r_n/p$  }
		\put(0,10){\rotatebox{90}{ {\footnotesize \ \ \ rejection rate (\%)  \ \ }}}
\put(40,-1){ $r_n/p \ (\%)$  }
\end{overpic}

\caption{ Results for the testing problem~\eqref{eqn:numericalH} when $\xi_1^2$ follows a Chi-Squared distribution with $p$ degrees of freedom. }
\label{fig:chisq}
\end{figure}

\begin{figure}[H]
\centering
	\DeclareGraphicsExtensions{.eps}
	\begin{overpic}[width=0.45\textwidth]{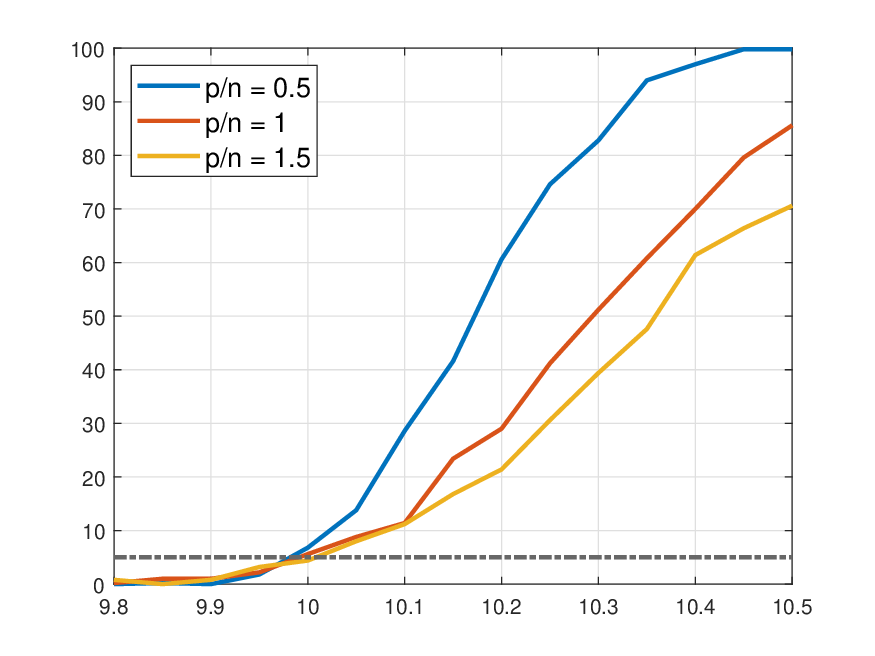} 
\put(22,75){ rejection rate vs. $r_n/p$  }
		\put(0,10){\rotatebox{90}{ {\footnotesize \ \ \ rejection rate (\%)  \ \ }}}
\put(40,-1){ $r_n/p \ (\%)$  }
	\end{overpic}
	\caption{Results for the testing problem~\eqref{eqn:numericalH} when $\xi_1^2$ follows a  Beta-Prime$\left(\frac{p(1+p+8)}{8},\frac{1+p+16}{8}\right)$ distribution.} 
	\label{fig:betaprime}
\end{figure}

\begin{figure}[H]
\centering
	\DeclareGraphicsExtensions{.eps}
	\begin{overpic}[width=0.45\textwidth]{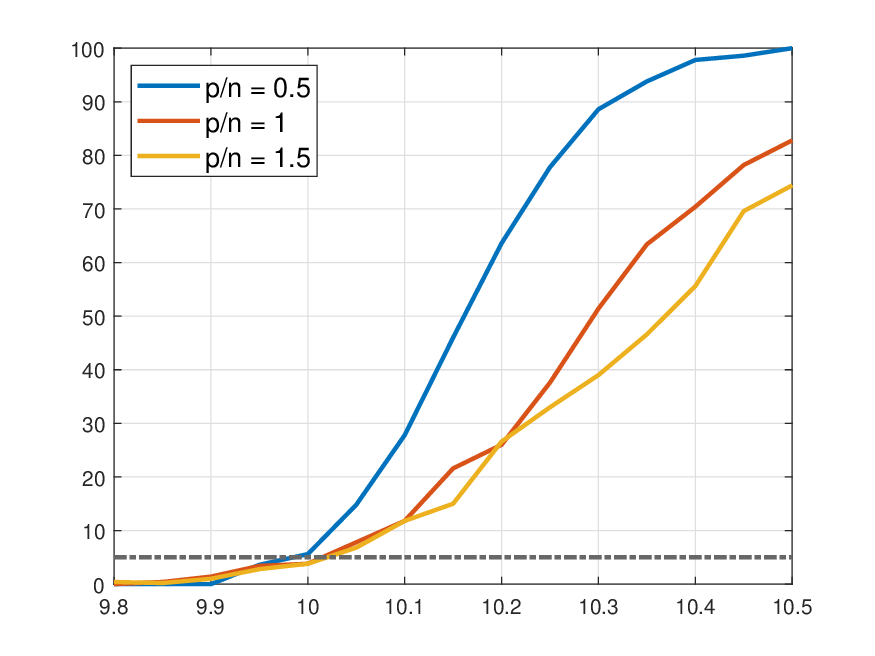} 
\put(22,75){ rejection rate vs. $r_n/p$  }
		\put(0,10){\rotatebox{90}{ {\footnotesize \ \ \ rejection rate (\%)  \ \ }}}
\put(40,-1){ $r_n/p \ (\%)$  }
	\end{overpic}	
\caption{Results for the testing problem~\eqref{eqn:numericalH} when $\xi_1^2$ follows a $(p+4)\textup{Beta}(p/2,2)$ distribution.}
\label{fig:beta}
\end{figure}

\begin{figure}[H]
\centering
	\DeclareGraphicsExtensions{.eps}
	\begin{overpic}[width=0.45\textwidth]{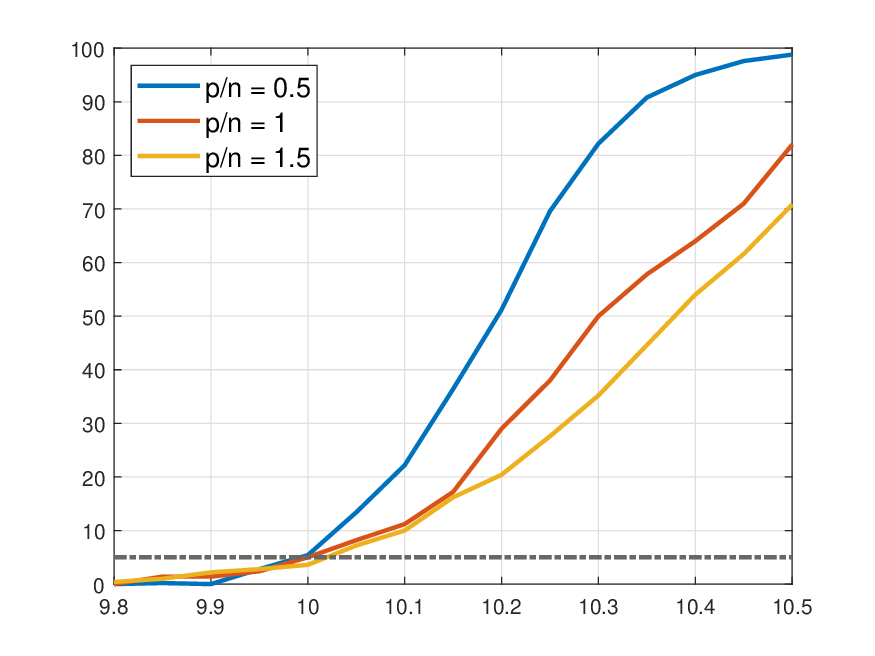} 
\put(22,75){ rejection rate vs. $r_n/p$  }
		\put(0,10){\rotatebox{90}{ {\footnotesize \ \ \ rejection rate (\%)  \ \ }}}
\put(40,-1){ $r_n/p \ (\%)$  }
	\end{overpic}	
\caption{Results for the testing problem~\eqref{eqn:numericalH} when $\xi_1^2$ follows a $\frac{9p}{10}\textup{F}(p,20)$ distribution.}
\label{fig:t}
\end{figure}

\subsection{Sphericity test}
Let $\mathsf{T}_{\text{sr}}=\hat r_n/p -1$ be a shorthand for the statistic that was introduced in Section~\ref{sec:testrank} for testing sphericity.
We now provide numerical comparisons with three other testing procedures based on linear spectral statistics. To define these other procedures, let
\begin{equation}
    \hat{\mathcal{S}}_n= \frac{p}{n}\sum_{i=1}^n \frac{\x_i\x_i\ttop}{\|\x_i\|_2^2}
\end{equation}
denote the sample covariance matrix of rescaled observations, and consider the following three statistics,
\begin{align}
    \mathsf{T}_1 &= \ts\frac{1}{p}\tr(\hat{\mathcal{S}}_n^2)-c_n-1\label{eqn:T1}\\[0.2cm]
    \mathsf{T}_2 &= \ts\frac{1}{p}\tr(\hat{\mathcal{S}}_n^4)-\ts\frac{4c_n}{p}\tr(\hat{\mathcal{S}}_n^3)-2c_n(\ts\frac{1}{p}\tr(\hat{\mathcal{S}}_n^2))^2+\ts\frac{10c_n^2}{p}\tr(\hat{\mathcal{S}}_n^2)-5c_n^3-1\label{eqn:T2}\\[0.2cm]
    \mathsf{T}_3 &= \max\Big\{\ts\frac{n\mathsf{T}_1+1}{2}\, , \, \frac{n\mathsf{T}_2+6-c_n}{\sqrt{8(18+12c_n+c_n^2)}}\Big\}.\label{eqn:Tm}
\end{align}
The testing procedures corresponding to these statistics reject the sphericity hypothesis when the statistics take large values.
The first two statistics can be attributed in part to the papers~\citep{Srivastava2005some} and~\citep{fisher2010}, and the proposal of taking the maximum was made in~\citep{tian2015robust}. However, in all of these works, an ordinary sample covariance matrix was used in place of $\hat{\mathcal{S}}_n$. Variants of the definition of $\mathsf{T}_1$ in~\eqref{eqn:T1} have been studied in~\citep{Paindeveine} and references in therein, while the definitions $\mathsf{T}_2$ and $\mathsf{T}_3$ in~\eqref{eqn:T2} and~\eqref{eqn:Tm} were proposed in~\citep{hu2019aos}. The latter paper also derived the limiting null distributions of all three statistics in high-dimensional elliptical models.

Since numerical comparisons of the statistics $\mathsf{T}_1$, $\mathsf{T}_2$, and $\mathsf{T}_3$ were given previously in~\citep{hu2019aos}, our experiments here are designed using similar settings. Under the null hypothesis, we generated data from a standard multivariate normal distribution in 15 cases, corresponding to 3 choices of $p/n\in\{0.5, 1, 2\}$ and 5 choices of $n\in\{100,200,300,400,500\}$. For the statistics $\mathsf{T}_1$, $\mathsf{T}_2$, and $\mathsf{T}_3$, we used the analytical critical values derived previously in~\cite{hu2019aos}, and for the statistic $\mathsf{T}_{\text{sr}}$, we determined its critical value using the proposed bootstrap with $B=500$.
For each setting under the null hypothesis, we generated 50000 datasets and calculated the empirical level of each test as the fraction of rejections among the 50000 trials.
The results corresponding to a nominal level of 5\% are displayed in Table~\ref{tab:null}, which shows that the empirical and nominal levels are in close agreement for all four statistics.

Regarding the alternative hypothesis, we retained all the settings described above, except that we replaced the null covariance matrix $\Sigma_n=I_p$ with a diagonal spiked covariance matrix such that $\lambda_j(\Sigma_n)=1.3$ for all $1\leq j\leq p/2$, and $\lambda_j(\Sigma_n)=1$ for all other $j$. This choice has the benefit that it creates variation in the numerical values of power, so that they are not too concentrated near 1. Similar alternatives were also used for the experiments in~\citep{hu2019aos}. The results are presented in Table~\ref{tab:power}, which is organized in the same format as Table~\ref{tab:null}. In each setting, the power of the statistic $\mathsf{T}_{\text{sr}}$ approximately matches the highest power achieved among $\mathsf{T}_1$,$\mathsf{T}_2$, and $\mathsf{T}_3$.

\begin{table}[H]
\setlength\tabcolsep{3pt}
\caption{Results on empirical level for sphericity tests (5\% nominal level)}
\begin{tabular}{llllllllllllllll}
\hline
      &  & \multicolumn{4}{c}{$p/n=0.5$} &  & \multicolumn{4}{c}{$p/n=1$} &  & \multicolumn{4}{c}{$p/n=2$} \\ \cline{3-6} \cline{8-11} \cline{13-16} 
      \specialrule{0em}{1pt}{1pt}
$n$ &  & $\mathsf{T}_1$   & $\mathsf{T}_2$   & $\mathsf{T}_3$  & $\mathsf{T}_{\text{sr}}$  &  & $\mathsf{T}_1$   & $\mathsf{T}_2$  & $\mathsf{T}_3$  & $\mathsf{T}_{\text{sr}}$  &  & $\mathsf{T}_1$   & $\mathsf{T}_2$  & $\mathsf{T}_3$  & $\mathsf{T}_{\text{sr}}$  \\
\hline
\specialrule{0em}{1pt}{1pt}
 100 &  &         0.049&0.048&0.051   & 0.050     &  &         0.049&0.049&0.051   &  0.054    &  &         0.048&0.051&0.053   &  0.051\\
 \specialrule{0em}{1pt}{1pt}
 \hline
\specialrule{0em}{1pt}{1pt}
 200 &  &         0.049&0.050&0.052   & 0.057     &  &         0.043&0.047&0.048   &  0.047    &  &         0.051&0.052&0.054   &  0.051\\
 \specialrule{0em}{1pt}{1pt}
 \hline
\specialrule{0em}{1pt}{1pt}
 300 &  &         0.046&0.048&0.051    & 0.046          &  &         0.046&0.047&0.048    &  0.048         &  &        0.051&0.054&0.059    &   0.052    \\
 \hline
\specialrule{0em}{1pt}{1pt}
 400 &  &       0.048&0.051&0.050  &  0.051          &  &         0.049&0.056&0.051    &  0.054        &  &        0.051&0.049&0.053    &  0.054\\
 \specialrule{0em}{1pt}{1pt}
 \hline
\specialrule{0em}{1pt}{1pt}
 500 &  &         0.049&0.045&0.048    & 0.052        &  &         0.050&0.046&0.048    &  0.053        &  &         0.044&0.045&0.044    &    0.046    \\
 \specialrule{0em}{1pt}{1pt}
 \hline
\end{tabular}
\label{tab:null}
\end{table}

\begin{table}[H]
\setlength\tabcolsep{3pt}
\caption{Results on power for sphericity tests}
\begin{tabular}{llllllllllllllll}
\hline
      &  & \multicolumn{4}{c}{$p/n=0.5$} &  & \multicolumn{4}{c}{$p/n=1$} &  & \multicolumn{4}{c}{$p/n=2$} \\ \cline{3-6} \cline{8-11} \cline{13-16} 
      \specialrule{0em}{1pt}{1pt}
$n$ &  & $\mathsf{T}_1$   & $\mathsf{T}_2$   & $\mathsf{T}_3$  & $\mathsf{T}_{\text{sr}}$  &  & $\mathsf{T}_1$   & $\mathsf{T}_2$  & $\mathsf{T}_3$  & $\mathsf{T}_{\text{sr}}$   &  & $\mathsf{T}_1$   & $\mathsf{T}_2$  & $\mathsf{T}_3$  & $\mathsf{T}_{\text{sr}}$  \\
\hline
\specialrule{0em}{1pt}{1pt}
 100 &  &         0.190&0.165&0.185    &   0.217          &  &         0.201&0.154&0.190     &  0.211          &  &         0.208&0.136&0.194    &   0.207       \\
 \specialrule{0em}{1pt}{1pt}
 \hline
\specialrule{0em}{1pt}{1pt}
 200 &  &         0.497&0.391&0.466    & 0.502           &  &         0.513&0.350&0.473    &   0.503       
        &  &         0.503&0.271&0.465    &     0.520       \\
 \specialrule{0em}{1pt}{1pt}
 \hline
\specialrule{0em}{1pt}{1pt}
 300 &  &         0.793&0.660&0.766    &  0.797         &  &         0.801&0.589&0.763    &  0.812         &  &        0.793&0.458&0.752    & 0.804         \\
 \hline
\specialrule{0em}{1pt}{1pt}
 400 &  &       0.952&0.863&0.941    &     0.952       &  &         0.951&0.790&0.933    &  0.954         &  &         0.958&0.670&0.941    &  0.949        \\
 \specialrule{0em}{1pt}{1pt}
 \hline
\specialrule{0em}{1pt}{1pt}
 500 &  &         0.993&0.958&0.990     &   0.994  &  &         0.993&0.922&0.989    &   0.993       &  &        0.993&0.816&0.987    &   0.995    \\ 
 \specialrule{0em}{1pt}{1pt}
 \hline
\end{tabular}
\label{tab:power}
\end{table}

\section{Conclusion}
Up to now, high-dimensional elliptical models  have generally fallen outside the scope of existing bootstrap methods for spectral statistics. In the current paper, we have addressed this problem by showing how a parametric bootstrap approach that is specialized to IC models~\citep{lopes2019bootstrapping} can be extended to elliptical models in high dimensions. In addition, we have shown that the new method is supported by two types of theoretical guarantees in the elliptical setting: First, the method consistently approximates the distributions of linear spectral statistics (Theorem~\ref{thm:main}). Second, the method can be applied to a nonlinear combination of these statistics to construct asymptotically valid confidence intervals and hypothesis tests (Theorem~\ref{thm:r}).
From a practical perspective, a valuable property of the method is its user-friendliness, since it can be applied to generic spectral statistics in an automatic way. In particular, this provides the user with the flexibility to easily explore spectral statistics whose asymptotic distributions are analytically inconvenient or unknown. With regard to empirical performance, we have presented extensive simulation results, showing that with few exceptions, the method accurately approximates the distributions of both linear and nonlinear spectral statistics across many settings and inference tasks. An interesting question for future work is to determine if a consistent parametric bootstrap method for spectral statistics can be developed within more general models that unify IC and elliptical models.

\section*{Appendices}

\appendix

In Appendices~\ref{app:estimators},~\ref{app:mainproof}, and~\ref{app:r}, we give the proofs of Theorems~\ref{thm:estimators},~\ref{thm:main}, and~\ref{thm:r} respectively. Background results are given in Appendix~\ref{app:background}, and additional details related to the examples in Section~\ref{sec:setup} are given in Appendix~\ref{app:examples}.

\setcounter{lemma}{0}
\renewcommand{\thelemma}{A.\arabic{lemma}}

\section{Proof of Theorem \ref{thm:estimators}}\label{app:estimators}
Our consistency guarantees for $\hat\varsigma_n^2$ and $\tilde H_n$ are proven separately in the next two subsections.

\subsection{Consistency of $\hat\varsigma_n^{\,2}$: Proof of~\eqref{eqn:tauconsistency} in Theorem~\ref{thm:estimators}}\label{sec:tauconsistency}

Define the parameter 
\begin{align}
    \tau_n = \frac{(p+2) (\beta_n-2\alpha_n)}{\gamma_n+2 \alpha_n} +2,
    \label{eqn:tau_tilde}
\end{align}
and the estimate
\begin{align}
    \hat{\tau}_n=\frac{(p+2)(\hat{\beta}_n - 2\hat{\alpha}_n)}{\hat{\gamma}_n + 2\hat{\alpha}_n} +2.
\end{align}

Based on the definitions of $\varsigma_n^2$ and $\hat\varsigma_n^2$, note that
$$\ts\frac{\varsigma_n^2}{p}=\tau_n \ \ \ \text{ and } \ \ \ \ \ts\frac{\hat\varsigma_n^2}{p} = \max\{\hat\tau_n,0\},$$
as well as the fact that the Assumption~\ref{Data generating model} implies $\tau_n\to\tau$ as $n\to\infty$. Since the function $x\mapsto \max\{x,0\}$ is 1-Lipschitz and $\tau_n\geq 0$, it suffices to show $\hat{\tau}_n - \tau_n \xrightarrow{\P} 0$.

In Lemmas \ref{lemma:gamma}, \ref{lemma:omega}, and \ref{lemma:nu} given later in this subsection, the following three limits  will be established,
\begin{align}
    \frac{\hat\alpha_n}{\alpha_n} \  \xrightarrow{\P} \ 1\\[0.2cm]
    \frac{\hat{\gamma}_n}{\gamma_n}\ \xrightarrow{\P} \ 1 \\[0.2cm]
    (p+2)\frac{\hat\beta_n-\beta_n}{\gamma_n + 2\alpha_n} \ \xrightarrow{\P}\  0.\label{eqn:nulim}
\end{align}
Due to the ratio-consistency of $\hat{\alpha}_n$ and $\hat{\gamma}_n$, as well as the fact that $\frac{(p+2) \alpha_n}{\gamma_n+ 2\alpha_n} \asymp 1$ holds under Assumption~\ref{Regularity of spectrum}, it is straightforward to check that 
\begin{equation}
       \frac{\gamma_n+ 2\alpha_n}{\hat{\gamma}_n + 2\hat{\alpha}_n} \xrightarrow{\P} 1  \ \ \  \ \text{ and } \ \ \ \  \frac{(p+2) \alpha_n}{\gamma_n+ 2\alpha_n} - \frac{(p+2)\hat{\alpha}_n}{\hat{\gamma}_n + 2\hat{\alpha}_n} \xrightarrow{\P} 0   .
\end{equation}
Therefore,
\footnotesize
\begin{equation}\label{eqn:taudiff}
\begin{split}
        \hat{\tau}_n - \tau_n &= \frac{(p+2)\hat\beta_n}{\gamma_n + 2\alpha_n}\left(\frac{\gamma_n+ 2\alpha_n}{\hat{\gamma}_n + 2\hat{\alpha}_n}- 1\right)+(p+2)\frac{\hat\beta_n-\beta_n}{\gamma_n + 2\alpha_n} + \bigg(2 \frac{(p+2) \alpha_n}{\gamma_n+ 2\alpha_n} -2\frac{(p+2)\hat\alpha_n}{\hat{\gamma}_n + 2\hat\alpha_n}\bigg) \\
    &= \frac{(p+2)\beta_n}{\gamma_n + 2\alpha_n}o_{\mathbb{P}}(1) + o_{\mathbb{P}}(1),
\end{split}
\end{equation}
\normalsize
where we have applied~\eqref{eqn:nulim} twice in the second step.
Under Assumption~\ref{Data generating model}, we have $\E(\xi_1^4)=p^2+\tau p +o(p)$ and so Lemma~\ref{lem:quadform} gives
\begin{align*}
    \beta_{n} & \ = \ \ts\frac{\E(\xi_1^4)}{p(p+2)}\Big(\tr(\Sigma_n)^2+2\tr(\Sigma_n^2)\Big) \ - \ \tr(\Sigma_n)^2\\[0.2cm]
    & \ = \ \ts\frac{p^2+\tau p +o(p)}{p(p+2)}\Big(\tr(\Sigma_n)^2+2\tr(\Sigma_n^2)\Big) \ - \ \tr(\Sigma_n)^2\\[0.2cm]
    & \ = \ \Big(\ts\frac{(\tau-2)p+o(p)}{p(p+2)}\Big)\tr(\Sigma_n)^2 + 2(1+o(1))\tr(\Sigma_n^2)\\[0.2cm]
    & \ \lesssim \ p.
\end{align*}
Consequently, we have $\frac{(p+2)\beta_n}{\gamma_n + 2\alpha_n} \lesssim 1$, and applying this to~\eqref{eqn:taudiff} completes the proof of~\eqref{eqn:tauconsistency} in Theorem~\ref{thm:estimators}.\qed

~\\

\begin{lemma}\label{lem:Frobenius}
If Assumptions \ref{Data generating model} and \ref{Regularity of spectrum} hold, then as $n\rightarrow \infty$
$$\frac{\hat\alpha_n}{\alpha_n} \  \xrightarrow{\P} \ 1.$$
\label{lemma:gamma} 
\end{lemma}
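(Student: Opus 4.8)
The goal is to show that $\hat\alpha_n/\alpha_n\xrightarrow{\P}1$, where $\alpha_n=\tr(\Sigma_n^2)$ and $\hat\alpha_n=\tr(\hat\Sigma_n^2)-\frac1n\tr(\hat\Sigma_n)^2$. The plan is to compute the first two moments of $\hat\alpha_n$ and apply Chebyshev's inequality after dividing by $\alpha_n$. Since Assumption~\ref{Regularity of spectrum} forces the eigenvalues of $\Sigma_n$ to lie in a fixed compact interval of $(0,\infty)$, we have $\tr(\Sigma_n^k)\asymp p$ for every fixed $k\geq 1$, so it suffices to prove $\E(\hat\alpha_n)=\alpha_n+o(p)$ and $\var(\hat\alpha_n)=o(p^2)$.

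\textbf{Step 1: Unbiasedness up to lower order.} First I would expand $\tr(\hat\Sigma_n^2)=\frac{1}{n^2}\sum_{i,j}(\x_i\ttop\x_j)^2$ and $\tr(\hat\Sigma_n)^2=\frac{1}{n^2}\sum_{i,j}\|\x_i\|_2^2\|\x_j\|_2^2$, splitting each double sum into diagonal ($i=j$) and off-diagonal ($i\neq j$) parts. Using the elliptical representation $\x_i=\xi_i\Sigma_n^{1/2}\u_i$ and the standard moment formulas for uniform vectors on the sphere (i.e.\ $\E[\u\u\ttop]=\frac1p I_p$ and the fourth-moment formula giving $\E[(\u\ttop A\u)(\u\ttop B\u)]=\frac{1}{p(p+2)}(\tr(A)\tr(B)+2\tr(AB))$ for symmetric $A,B$, valid also for $i\neq j$ by independence after conditioning on the $\xi$'s), together with $\E(\xi_1^2)=p$ and $\E(\xi_1^4)=p^2+\tau p+o(p)$, I would compute $\E(\tr(\hat\Sigma_n^2))$ and $\E(\tr(\hat\Sigma_n)^2)$ term by term. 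The key cancellation is that the off-diagonal part of $\frac1n\tr(\hat\Sigma_n)^2$ is designed precisely to cancel the bias introduced in $\tr(\hat\Sigma_n^2)$; what remains after taking the expectation should be $\alpha_n(1+o(1))$, with the diagonal $i=j$ contributions being $O(p^2/n)=O(p)$ and hence lower order relative to... wait, $\alpha_n\asymp p$ so these are the \emph{same} order. I would therefore need to track the diagonal terms carefully: the $i=j$ piece of $\tr(\hat\Sigma_n^2)$ is $\frac1{n^2}\sum_i\|\x_i\|_2^4$ with expectation $\asymp\frac1n(\tr(\Sigma_n)^2+2\tr(\Sigma_n^2))\asymp p$, and similarly for the other term; these must also cancel (up to $o(p)$) through the construction of $\hat\alpha_n$, or else be absorbed. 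This bookkeeping is the place to be most careful.

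\textbf{Step 2: Variance bound.} Next I would show $\var(\hat\alpha_n)=o(p^2)$, equivalently $\var(\hat\alpha_n)/\alpha_n^2\to 0$. Writing $\hat\alpha_n$ as a polynomial of degree at most $4$ in the entries of the $\x_i$'s, or more conveniently as a function of the quadratic forms $\|\x_i\|_2^2$ and $\x_i\ttop\x_j$, I would bound the variance by expanding into a sum over tuples of indices $(i,j,k,\ell)$ and controlling the mixed moments. The dominant contributions come from tuples with at most two distinct indices; using the $(4+\varepsilon)$-moment bound on $(\xi_1^2-p)/\sqrt p$ from~\eqref{eqn:moment assumption} to control $\E(\xi_i^8)\lesssim p^4$ and the sphere-moment formulas, each such contribution should be $O(p^4/n)=O(p^3)$ at worst in absolute terms; but since we are dividing by $\alpha_n^2\asymp p^2$, I need the variance itself to be $o(p^2)$, which requires the leading $O(p^2)$-order terms in $\var(\tr(\hat\Sigma_n^2))$, $\var(\frac1n\tr(\hat\Sigma_n)^2)$, and their covariance to cancel. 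This is the analogue of the classical CLT-scale computation: $\tr(\hat\Sigma_n^2)$ has fluctuations of order $\sqrt p$ (so variance $\asymp p$), hence after centering $\var(\hat\alpha_n)\asymp p=o(p^2)$, and Chebyshev closes the argument. I expect the variance computation to be the main obstacle — not conceptually, but because it involves a genuinely lengthy enumeration of index patterns and mixed moments of $\xi_i$ and $\u_i$, and one must verify that no pattern produces a term of order $p^2$ or larger after the cancellations built into the definition of $\hat\alpha_n$.

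\textbf{Step 3: Conclusion.} Combining Steps 1 and 2, by Chebyshev's inequality $\P(|\hat\alpha_n-\E(\hat\alpha_n)|>\delta\alpha_n)\leq\var(\hat\alpha_n)/(\delta^2\alpha_n^2)\to 0$ for any fixed $\delta>0$, and since $\E(\hat\alpha_n)/\alpha_n\to1$, the triangle inequality gives $\hat\alpha_n/\alpha_n\xrightarrow{\P}1$, as claimed. An alternative route that avoids some of the Step-2 enumeration would be to invoke the CLT for linear spectral statistics in elliptical models from~\citep{hu2019aos} applied to $f(x)=x^2$, which directly yields $\tr(\hat\Sigma_n^2)=\tr(\Sigma_n^2)+O_{\P}(\sqrt p\cdot\text{const})$ after recentering; one would still need the analogous control for $\frac1n\tr(\hat\Sigma_n)^2$, which is easier since $\tr(\hat\Sigma_n)=\frac1n\sum_i\|\x_i\|_2^2$ is an average of i.i.d.\ scalars with mean $\tr(\Sigma_n)$ and variance $\asymp p$, giving $\tr(\hat\Sigma_n)=\tr(\Sigma_n)(1+o_{\P}(1))$ and hence $\frac1n\tr(\hat\Sigma_n)^2=O_{\P}(p)=o_{\P}(p^2)$, negligible relative to $\alpha_n^2$ only after squaring — so direct moment bounds remain the cleanest self-contained approach.
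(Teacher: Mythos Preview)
Your overall strategy---control the first two moments of $\hat\alpha_n$ and invoke Chebyshev---is sound and will work, but the paper executes it through a cleaner decomposition that sidesteps exactly the two worries you flag. The paper rewrites
\[
\frac{\hat\alpha_n-\alpha_n}{\alpha_n} \;=\; A_n + B_n,
\qquad
A_n = \frac{1}{\alpha_n n^2}\sum_{i\neq j}(\x_i^{\top}\x_j)^2 - 1,
\qquad
B_n = \frac{1}{\alpha_n n^3}\sum_{i>j}\big(\|\x_i\|_2^2-\|\x_j\|_2^2\big)^2,
\]
using the identity $\tfrac{n-1}{n^3}\sum_i\|\x_i\|_2^4=\tfrac{1}{n^3}\sum_{i\neq j}\|\x_i\|_2^4$ to fold the diagonal $i=j$ pieces into the off-diagonal sum defining $B_n$. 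This is precisely the bookkeeping you were anxious about in Step~1: rather than tracking how the diagonal terms nearly cancel in expectation, the decomposition absorbs them completely into $B_n$. Since $B_n\geq 0$ manifestly, showing $\E(B_n)\to 0$ (a first-moment computation via Lemma~\ref{lem:quadform}) gives $B_n\xrightarrow{\P}0$ by Markov's inequality---no variance calculation is needed for this piece. The second-moment work is then confined to $A_n$, which involves only the off-diagonal quadratic forms $(\x_i^{\top}\x_j)^2$ with $i\neq j$; the paper shows $\E(A_n^2)\lesssim 1/n$ using the mixed-moment bounds $\E[(\x_i^{\top}\x_j)^2(\x_k^{\top}\x_l)^2]\asymp\alpha_n^2$ for index patterns with two, three, or four distinct indices (their Lemma~D.2).

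What this buys: the paper avoids your Step~2 entirely in the form you describe. You would need to compute $\var(\hat\alpha_n)$ directly, which requires eighth-order mixed moments of the $\x_i$ (coming from $\var$ of terms like $\|\x_i\|_2^4$) and careful tracking of cancellations between $\var(\tr(\hat\Sigma_n^2))$, $\var(\tfrac1n\tr(\hat\Sigma_n)^2)$, and their covariance. The paper's split handles the ``self'' terms with a one-line first-moment bound and isolates the genuine second-moment work in a U-statistic with independent off-diagonal summands. Your approach is correct but considerably more laborious; if you pursue it, the decomposition above is worth adopting. (A minor side note: your estimate that $\tr(\hat\Sigma_n^2)$ has fluctuations of order $\sqrt{p}$ is larger than what the LSS CLT actually gives, but this overestimate is harmless for the ratio-consistency conclusion.)
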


\proof
    Recall that $\alpha_n=\tr(\Sigma_n^2)$ and that $\hat\alpha_n=\tr(\hat\Sigma_n^2)-\frac{1}{n}\tr(\hat\Sigma_n)^2$. The two terms in the estimate can be expanded as
    \begin{equation}
        \begin{split}
            \tr(\hat\Sigma_n^2)  & \ = \ \frac{1}{n^2}\sum_{i=1}^n\sum_{j=1}^n (\mathbf{x}_i\ttop \mathbf{x}_j)^2\\[0.2cm]
            \frac{1}{n}\tr(\hat\Sigma_n)^2 & \ = \ \frac{1}{n^3}\sum_{i=1}^n\sum_{j=1}^n (\mathbf{x}_i\ttop \mathbf{x}_i)(\mathbf{x}_j\ttop \mathbf{x}_j),
        \end{split}
    \end{equation}
    which leads to the algebraic relation
    \begin{equation}
    \begin{split}
        \frac{\hat\alpha_n-\alpha_n}{\alpha_n} & \ = \ \bigg(\frac{1}{\alpha_n n^2}\sum_{i\neq j} (\mathbf{x}_i\ttop \mathbf{x}_j)^2 \ - \ 1 \bigg) \ + \ \frac{1}{\alpha_n n^3}\sum_{i\neq j} \Big((\mathbf{x}_i\ttop \mathbf{x}_i)^2 - (\mathbf{x}_i\ttop \mathbf{x}_i) (\mathbf{x}_j\ttop \mathbf{x}_j)\Big)\\[0.2cm]
        & \ =: \ A_n + B_n.
        \end{split}
    \end{equation}
    
    In the remainder of the proof, we will show that $A_n$ and $B_n$ are both $o_{\P}(1)$. We begin with the analysis of $B_n$, since it is simpler. Note that $B_n$ is always non-negative, since it can be rewritten as
    \begin{equation}
        B_n \ = \ \ts\frac{1}{\alpha_n n^3}\sum_{i>j}(\mathbf{x}_i\ttop \mathbf{x}_i - \mathbf{x}_j\ttop \mathbf{x}_j)^2,
    \end{equation}
    and so it suffices to show that $\E(B_n)=o(1)$. Furthermore, the expectation of $B_n$ can be computed directly as
    \begin{equation}\label{eqn:varnormsq}
        \begin{split}
            \E(B_n) 
            & \ = \ \ts\frac{1}{\alpha_n n^3} \sum_{i>j} 2\,\var(\mathbf{x}_i\ttop \mathbf{x}_i)\\[0.2cm]
            & \ = \ \ts\frac{n(n-1)}{\alpha_n n^3}\var(\mathbf{x}_1\ttop \mathbf{x}_1)\\[0.2cm]
            & \ = \ \frac{n(n-1)}{\alpha_n n^3}\bigg[\Big(\ts\frac{\E(\xi_1^4)}{p(p+2)}-1\Big)\tr(\Sigma_n)^2 \ + \ 2\frac{\E(\xi_1^4)}{p(p+2)}\tr(\Sigma_n^2)\bigg] \ \ \ \  \ \text{(Lemma \ref{lem:quadform})}\\[0.2cm]
            & \ = \ \frac{n(n-1)}{\alpha_n n^3}\bigg[\mathcal{O}\big(\ts\frac{1}{p}\big)\tr(\Sigma_n)^2 \ + \ 2\Big(1+\mathcal{O}\big(\ts\frac{1}{p}\big)\Big)\tr(\Sigma_n^2)\bigg] \ \ \ \ \ \ \ \text{(Assumption \ref{Data generating model})}\\[0.2cm]
            & \ \lesssim \ \ts\frac{1}{n},
        \end{split}
    \end{equation}
    where the last step uses $\frac{\tr(\Sigma_n)^2}{\alpha_n}\leq p$. Thus, $\E(B_n)=o(1)$.\\

     Now we handle the term $A_n$ by showing that $\E(A_n^2)=o(1)$. It is helpful to start by noting that if $i\neq j$, then
    \begin{equation}
        \E((\mathbf{x}_i\ttop \mathbf{x}_j)^2) = \tr(\Sigma_n^2),
    \end{equation}
    which can be checked by a direct calculation. (See the calculation in~\eqref{eqn:directcalc} for additional details.)  Consequently, if we expand out the square $A_n^2$ and then take the expectation, it follows that
    \begin{equation}\label{eqn:secondAn}
        \E(A_n^2) \ = \ \frac{1}{\alpha_n^2n^4}\E\bigg[\Big(\sum_{i\neq j}(\mathbf{x}_i\ttop \mathbf{x}_j)^2\Big)^2\bigg] \ - \  \frac{2n(n-1)}{n^2}  \ + \  1.
    \end{equation}
    Next, we compute the second moment on the right as
    \begin{equation}\label{eqn:quadsum}
        \E\bigg[\Big(\sum_{i\neq j}(\mathbf{x}_i\ttop \mathbf{x}_j)^2\Big)^2\bigg] \ = \ \displaystyle\sum_{i\neq j}\sum_{l\neq k} \E\Big[(\mathbf{x}_i\ttop \mathbf{x}_j)^2(\mathbf{x}_l\ttop \mathbf{x}_k)^2\Big].
    \end{equation}
    In Lemma~\ref{lem:mixedmoments}, it is shown that if $(i,j,k,l)$ are four distinct indices, then
     \begin{align}
    \E((\mathbf{x}_i\ttop \mathbf{x}_j)^4) \ &= \ 3\left(\frac{\E(\xi_1^4)}{p(p+2)}\right)^2 (\alpha_n^2 + 2 \tr(\Sigma_n^4))\label{eqn:quadmix}\\[0.2cm]
    \E((\mathbf{x}_i\ttop \mathbf{x}_j)^2(\mathbf{x}_i\ttop \mathbf{x}_k)^2) \ &= \ \frac{\E(\xi_1^4)}{p(p+2)} (\alpha_n^2 + 2 \tr(\Sigma_n^4))\label{eqn:triplemix}\\[0.2cm]
    \E((\mathbf{x}_i\ttop \mathbf{x}_j)^2(\mathbf{x}_l\ttop \mathbf{x}_k)^2) \ &= \ \alpha_n^2.\label{eqn:doublemix}
\end{align}
    Note that in the double sum~\eqref{eqn:quadsum}, the numbers of terms involving 2, 3, and 4 distinct indices are respectively $\mathcal{O}(n^2)$, $\mathcal{O}(n^3)$ and $n^4+\mathcal{O}(n^3)$.
    Applying these observations to~\eqref{eqn:quadsum}, we have
    \begin{equation*}
    \begin{split}
        \ts\frac{1}{\alpha_n^2n^4}\E\bigg[\Big(\sum_{i\neq j}(\mathbf{x}_i\ttop \mathbf{x}_j)^2\Big)^2\bigg]
    & \ = \ \ts\frac{\alpha_n^2}{\alpha_n^2 n^4}\Big(n^4+\mathcal{O}(n^3)\Big)\\
    & \ \ \ \ \ + \ \ts\frac{1}{\alpha_n^2 n^4}\Big(\ts\frac{\E(\xi_1^4)}{p(p+2)} (\alpha_n^2 + 2 \tr(\Sigma_n^4))\Big) \mathcal{O}(n^3)\\
    & \ \ \ \ \ + \ \ts\frac{1}{\alpha_n^2 n^4} \Big(3\left(\frac{\E(\xi_1^4)}{p(p+2)}\right)^2 (\alpha_n^2 + 2 \tr(\Sigma_n^4))\Big)\mathcal{O}(n^2)\\[0.2cm]
    & \ = \ 1 + \mathcal{O}(\ts\frac{1}{n}).
        \end{split}
    \end{equation*}
    Combining this with~\eqref{eqn:secondAn}, we reach the following bound on $\E(A_n^2)$,
    \begin{equation}
    \begin{split}
        \E(A_n^2) &\ = \ 1 +\mathcal{O}(\ts\frac{1}{n}) -  \frac{2n(n-1)}{n^2}  \ + \  1\\
        & \ \lesssim \ \ts\frac 1n,
        \end{split}
    \end{equation}
    which completes the proof.
\qed

\begin{lemma}\label{lemma:omega}
If Assumptions \ref{Data generating model} and \ref{Regularity of spectrum} hold, then as $n\to\infty$
$$\frac{\hat{\gamma}_n}{\gamma_n}\ \xrightarrow{\P} \ 1.$$
\end{lemma}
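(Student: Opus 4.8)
The plan is to reduce the claim to a one–line second–moment argument for the linear statistic $\tr(\hat\Sigma_n)$. Writing $W_n=\tr(\hat\Sigma_n)=\frac1n\sum_{i=1}^n\|\x_i\|_2^2$, we have $\hat\gamma_n=W_n^2$ and $\gamma_n=\tr(\Sigma_n)^2$, so by the continuous mapping theorem it suffices to show $W_n/\tr(\Sigma_n)\xrightarrow{\P}1$. Since $\Sigma_n$ is non-zero and positive semidefinite, $\tr(\Sigma_n)>0$, and under Assumption~\ref{Regularity of spectrum} the eigenvalues of $\Sigma_n$ lie in a fixed compact subinterval of $(0,\infty)$ for all large $n$, which gives $\tr(\Sigma_n)\asymp p$ and $\tr(\Sigma_n^2)\asymp p$; these order-of-magnitude facts will be used repeatedly.

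First I would compute the mean. Using the representation $\x_i=\xi_i\Sigma_n^{1/2}\u_i$, the independence of $\u_i$ and $\xi_i$, the identity $\E(\u_1\u_1\ttop)=\frac1p I_p$, and the normalization $\E(\xi_1^2)=p$ from Assumption~\ref{Data generating model}, one obtains $\E(\|\x_1\|_2^2)=\E(\xi_1^2)\,\tr\!\big(\Sigma_n\E(\u_1\u_1\ttop)\big)=\tr(\Sigma_n)$, hence $\E(W_n)=\tr(\Sigma_n)$.

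Next I would bound the variance. Because the observations are i.i.d., $\var(W_n)=\frac1n\var(\|\x_1\|_2^2)=\frac1n\beta_n$, and the explicit formula for $\var(\|\x_1\|_2^2)$ in Lemma~\ref{lem:quadform}, combined with $\E(\xi_1^4)=p^2+\tau p+o(p)$ (a consequence of Assumption~\ref{Data generating model}) and the bounds $\tr(\Sigma_n)\asymp p$, $\tr(\Sigma_n^2)\asymp p$, yields $\beta_n\lesssim p$ — this is exactly the estimate already derived in the proof of~\eqref{eqn:tauconsistency}. Therefore $\var(W_n)/\gamma_n\lesssim \tfrac{p/n}{p^2}=\tfrac{1}{np}\to 0$, so Chebyshev's inequality gives $W_n/\tr(\Sigma_n)\xrightarrow{\P}1$, and squaring concludes the argument.

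I do not anticipate a genuine obstacle here: the only quantitative input is the formula for $\var(\|\x_1\|_2^2)$ supplied by Lemma~\ref{lem:quadform}, and everything else is a routine Chebyshev estimate followed by the continuous mapping theorem. The single point that warrants a little care is checking that $\tr(\Sigma_n)$ and $\tr(\Sigma_n^2)$ are both of exact order $p$, which is what makes $\var(W_n)/\gamma_n$ decay; this follows from the compact-support condition in Assumption~\ref{Regularity of spectrum}.
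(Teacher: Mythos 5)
Your proposal is correct and is essentially the paper's argument: both reduce to the bound $\var(\tr(\hat\Sigma_n))=\tfrac1n\var(\|\x_1\|_2^2)\lesssim p/n$ obtained from Lemma~\ref{lem:quadform} together with $\tr(\Sigma_n)\asymp p$, $\tr(\Sigma_n^2)\asymp p$ under Assumption~\ref{Regularity of spectrum}. The only cosmetic difference is that you apply Chebyshev at the level of $\tr(\hat\Sigma_n)$ and square via the continuous mapping theorem, whereas the paper bounds $\E|\hat\gamma_n-\gamma_n|$ directly via the identity $a^2-b^2=2b(a-b)+(a-b)^2$.
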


\proof
    Recall $\gamma_n = \tr(\Sigma_n)^2$ and $\hat{\gamma}_n =\tr(\hat{\Sigma}_n)^2$, and note that the algebraic identity $a^2-b^2 = 2b(a-b)+(a-b)^2$ gives
    \begin{align*}
        \E|\hat{\gamma}_n-\gamma_n|  & \ = \  \E\Big|2\tr(\Sigma_n)\big(\tr(\hat{\Sigma}_n) - \tr(\Sigma_n)\big) + \big(\tr(\hat{\Sigma}_n) - \tr(\Sigma_n)\big)^2\Big|\\[0.2cm]
        &  \ \leq \  2\tr(\Sigma_n)\big(\operatorname{var}(\tr(\hat{\Sigma}_n))\big)^{1/2} +\operatorname{var}(\tr(\hat{\Sigma}_n)).
    \end{align*}
   Next, observe that $\var(\tr(\hat\Sigma_n))=\frac 1n \var(\x_1\ttop \x_1)$, and that the calculation in~\eqref{eqn:varnormsq} shows $\var(\x_1\ttop \x_1)\lesssim p$. Combining this with the fact that $\gamma_n\asymp p^2$ under Assumption~\ref{Regularity of spectrum}, we have
   \begin{equation}
       \frac{ \E|\hat{\gamma}_n-\gamma_n|}{\gamma_n}  \ \lesssim \ \frac{\tr(\Sigma_n)+1}{p^2}
   \end{equation}
   which leads to the stated result.\qed

\begin{lemma}
    \label{lemma:nu}
If Assumptions \ref{Data generating model} and \ref{Regularity of spectrum} hold, then as $n\to\infty$,
 \begin{equation}
     (p+2)\frac{\hat\beta_n-\beta_n}{\gamma_n + 2\alpha_n} \ \xrightarrow{\P}\  0.
 \end{equation}
\end{lemma}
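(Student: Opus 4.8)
The plan is to show that $\hat\beta_n - \beta_n = o_{\P}(p)$ and then divide by $\gamma_n + 2\alpha_n$, which is of order $p^2$ under Assumption~\ref{Regularity of spectrum} (since $\gamma_n = \tr(\Sigma_n)^2 \asymp p^2$), so that $(p+2)\frac{\hat\beta_n-\beta_n}{\gamma_n+2\alpha_n} \lesssim \frac{p \cdot o_{\P}(p)}{p^2} = o_{\P}(1)$. Wait — that gives only $o_{\P}(1)$ if $\hat\beta_n - \beta_n = o_{\P}(p)$; more carefully, since the prefactor is $(p+2)/(\gamma_n+2\alpha_n) \asymp 1/p$, it suffices to prove $\hat\beta_n - \beta_n = o_{\P}(p)$. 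The natural route is to bound $\E|\hat\beta_n - \beta_n|$, or more conveniently $\var(\hat\beta_n)$ together with $|\E\hat\beta_n - \beta_n|$, and show both are $o(p)$ (for the bias) and $o(p^2)$ (for the variance).

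First I would address the bias. The estimator $\hat\beta_n = \frac{1}{n-1}\sum_{i=1}^n(\|\x_i\|_2^2 - \overline{\|\x\|^2})^2$ is the usual sample variance of the i.i.d.\ scalars $\|\x_1\|_2^2,\dots,\|\x_n\|_2^2$, hence exactly unbiased for $\beta_n = \var(\|\x_1\|_2^2)$; so $\E\hat\beta_n = \beta_n$ and the bias term vanishes identically. This reduces the problem to showing $\var(\hat\beta_n) = o(p^2)$. Using the standard formula for the variance of a sample variance, $\var(\hat\beta_n) = \frac{1}{n}\big(\mu_4 - \frac{n-3}{n-1}\mu_2^2\big)$ where $\mu_2 = \var(\|\x_1\|_2^2) = \beta_n$ and $\mu_4 = \E(\|\x_1\|_2^2 - \E\|\x_1\|_2^2)^4$ is the fourth central moment of $\|\x_1\|_2^2$. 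Since $n \asymp p$, it suffices to show $\mu_4 \lesssim p^3$ and recall from the bound on $\beta_n$ already derived in Section~\ref{sec:tauconsistency} that $\mu_2 = \beta_n \lesssim p$, so $\mu_2^2 \lesssim p^2$; then $\var(\hat\beta_n) \lesssim \frac{1}{p}(p^3 + p^2) \lesssim p^2$, which unfortunately is only $O(p^2)$, not $o(p^2)$. To get the needed $o(p^2)$ I would instead track the constants more carefully: the dominant contribution to $\beta_n$ is $2\tr(\Sigma_n^2) \asymp p$, so $\mu_2^2 \asymp p^2$, and I expect $\mu_4$ to be genuinely $\Theta(p^2)$ rather than $\Theta(p^3)$ once one writes $\|\x_1\|_2^2 = \xi_1^2 \u_1^\top \Sigma_n \u_1$ and expands, because the fluctuations of $\u_1^\top\Sigma_n\u_1$ around $\tr(\Sigma_n)/p$ are of order $\sqrt{\tr(\Sigma_n^2)}/p \asymp p^{-1/2}$ while $\xi_1^2$ has standard deviation $\asymp \sqrt p$; this makes $\|\x_1\|_2^2$ have standard deviation $\asymp \sqrt p \cdot \frac{\tr(\Sigma_n)}{p} \asymp \sqrt p$, hence $\mu_4 \asymp p^2$, giving $\var(\hat\beta_n) \lesssim p^2/n \asymp p$ — comfortably $o(p^2)$.

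So the concrete steps are: (1) record that $\hat\beta_n$ is exactly unbiased, killing the bias; (2) invoke the sample-variance variance formula $\var(\hat\beta_n) = \frac{\mu_4}{n} - \frac{(n-3)\mu_2^2}{n(n-1)}$; (3) compute or bound the fourth central moment $\mu_4$ of $\|\x_1\|_2^2$ by expanding $\|\x_1\|_2^2 = \xi_1^2\,\u_1^\top\Sigma_n\u_1$, using the independence of $\xi_1$ and $\u_1$, the moment assumption $\E|(\xi_1^2-p)/\sqrt p|^{4+\varepsilon} \lesssim 1$ (so in particular $\E\xi_1^8 \lesssim p^4$), and the known moments of quadratic forms $\u_1^\top\Sigma_n\u_1$ in a uniform-sphere vector (this is where a lemma like Lemma~\ref{lem:quadform} or its higher-moment analogue, together with $\tr(\Sigma_n^k) \asymp p$, enters), to conclude $\mu_4 \lesssim p^2$; (4) combine with $\mu_2 = \beta_n \lesssim p$ and $n \asymp p$ to get $\var(\hat\beta_n) \lesssim p$, hence $\hat\beta_n - \beta_n = o_{\P}(p)$ by Chebyshev; (5) divide by $\gamma_n + 2\alpha_n \asymp p^2$ and multiply by $p+2$ to finish. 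The main obstacle is step (3): the fourth central moment $\mu_4$ of $\|\x_1\|_2^2$ requires a somewhat lengthy bookkeeping of mixed moments of $\xi_1^2$ and of $\u_1^\top\Sigma_n\u_1$ (including cross terms when centering), and one must be careful to extract the correct power of $p$ — showing it is $O(p^2)$ and not $O(p^3)$ is exactly what makes the prefactor $(p+2)/(\gamma_n+2\alpha_n)$ squeeze the error down to $o_{\P}(1)$; if only the crude $\mu_4 \lesssim p^3$ bound were available the argument would fail, so the refined moment computation is essential and is the technical heart of the lemma.
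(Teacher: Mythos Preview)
Your proposal is correct and follows essentially the same approach as the paper's proof: exploit the exact unbiasedness of $\hat\beta_n$, reduce to showing $\var(\hat\beta_n)=o(p^2)$ (since $(p+2)/(\gamma_n+2\alpha_n)\asymp 1/p$), and obtain this via the bound $\var(\hat\beta_n)\lesssim \frac{1}{n}\mu_4$ with $\mu_4=\E|\|\x_1\|_2^2-\tr(\Sigma_n)|^4\lesssim p^2$. The paper carries out your step (3) exactly as you anticipate, writing $\|\x_1\|_2^2-\tr(\Sigma_n)=\xi_1^2\big(\u_1\ttop\Sigma_n\u_1-\frac{1}{p}\tr(\Sigma_n)\big)+\big(\frac{\xi_1^2}{p}-1\big)\tr(\Sigma_n)$, then invoking $\E(\xi_1^8)\lesssim p^4$, $\E|\frac{\xi_1^2}{p}-1|^4\lesssim p^{-2}$ from Assumption~\ref{Data generating model}, and the fourth-moment concentration bound $\E|\u_1\ttop\Sigma_n\u_1-\frac{1}{p}\tr(\Sigma_n)|^4\lesssim p^{-2}$ (cited from \cite{gao2017high}) to conclude $\mu_4\lesssim p^2$ and hence $\var(\hat\beta_n)\lesssim p$.
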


\proof
Recall that $\beta_{n}=\var(\|\mathbf{x}_1\|_2^2)$ and $\hat\beta_n=\frac{1}{n-1}\sum_{i=1}^n \Big(\|\mathbf{x}_{i}\|_2^2 - \ts\frac{1}{n}\sum_{i'=1}^n \|\mathbf{x}_{i'}\|_2^2\Big)^2$. It is clear that $\E(\hat{\beta}_{n}) = \beta_n$, and so it suffices to show  
$$\ts\frac{(p+2)^2}{(\gamma_n + 2\alpha_n)^2}\var(\hat\beta_n) = o(1).$$
Since Assumption~\ref{Regularity of spectrum} implies $\gamma_n + 2\alpha_n\asymp p^2$, it remains to show $\var(\hat{\beta}_{n}) = o(p^2)$. \\

Making use of a standard bound for the variance of a sample variance, we have
\begin{equation}\label{eqn:rosenthal}
\operatorname{var}(\hat{\beta}_{n})  \ \lesssim  \ \ts\frac{1}{n}\, \E\Big|\left\|\mathbf{x}_{1} \right\|_{2}^{2}-\tr(\Sigma_n)\Big|^4.
\end{equation}
To bound the right side of~\eqref{eqn:rosenthal}, observe that
\small
\begin{align*}
\E\Big|\!\left\|\mathbf{x}_{1} \right\|_{2}^{2}-\tr\left(\Sigma_n\right)\!\Big|^4
& \ = \ \E\left|\xi_1^{2}\left(\mathbf{u}_1\ttop \Sigma_n \mathbf{u}_1-\ts\frac{1}{p} \tr (\Sigma_n)\right)+\left(\ts\frac{1}{p} \xi_1^{2}-1\right) \tr(\Sigma_n)\right|^{4} \\[0.2cm]
& \ \lesssim \  \E(\xi_1^{8}) \cdot \E\left|\mathbf{u}_1\ttop \Sigma_n \mathbf{u}_1-\ts\frac{1}{p} \tr (\Sigma_n)\right|^{4}  + \tr(\Sigma_n)^4 \cdot \E\left|\ts\frac{1}{p} \xi_1^{2}-1\right|^{4}.\label{eqn:var4thmoment}
\end{align*}
\normalsize
Since $\lambda_1(\Sigma_n)\lesssim 1$ holds under Assumption~\ref{Regularity of spectrum}, it follows from Lemma~\ref{lem:4thmomentbound} that
$$
\E\left|\mathbf{u}_1\ttop \Sigma_n \mathbf{u}_1-\ts\frac{1}{p} \tr(\Sigma_n) \right|^{4} \ \lesssim \ \ts\frac{1}{ p^2}.
$$
Also, Assumption~\ref{Data generating model} implies
\begin{equation}
    \E\left|\ts\frac{1}{p} \xi_1^{2}-1\right|^{4} \ \lesssim \ \ts\frac{1}{p^2} \ \ \ \ \ \text{ and } \ \ \ \ \ \E(\xi_1^8) \, \lesssim \, p^4.
\end{equation}
Applying the last several observations to~\eqref{eqn:rosenthal} implies $\var(\hat\beta_n) \lesssim p$,
which yields the stated result.
\qed

\subsection{Consistency of $\tilde H_n$: Proof of~\eqref{eqn:Hconsistency} in Theorem~\ref{thm:estimators}}\label{sec:Hconsistency}

For each $t\in\R$, denote the empirical distribution function of the QuEST eigenvalue estimates as
$$\hat{H}_{\textup{Q},n}(t)=\frac 1p\sum_{j=1}^p 1\{\hat\lambda_{\textup{Q},j}\leq t\}.$$
It follows from Lemma~\ref{lem:LW} below that the limit
\begin{equation}\label{eqn:prelimLW}
    \hat H_{\textup{Q},n}\Rightarrow H
\end{equation}
holds almost surely as $n\to\infty$. So, to prove~\eqref{eqn:Hconsistency}, it is sufficient to show 
    \smash{$ \sup_{t\in\R}  | \hat{H}_{\textup{Q},n}(t)  -  \tilde{H}_{n}(t) | \xrightarrow{\P}0$.}

Since $\hat\lambda_{\textup{Q},j}$ and $\tilde\lambda_j$ can only disagree when $\hat\lambda_{\textup{Q},j}>\hat b_n$, we have
\begin{equation}\label{eqn:dKbound}
\begin{split}
 \sup_{t\in\R}  | \hat{H}_{\textup{Q},n}(t)  -  \tilde{H}_{n}(t) | & \ \leq  \ \frac{1}{p}\sum_{j=1}^p 1\{\hat\lambda_{\textup{Q},j}\neq \tilde\lambda_{j}\}\\[0.2cm]
    & \ = \ \frac{1}{p}\sum_{j=1}^p 1\{\hat\lambda_{\textup{Q},j}> \hat b_n\}\\[0.2cm]
    & \ = \ 1-\hat{H}_{\textup{Q},n}(\hat b_n).
    \end{split}
\end{equation}
Let $u$ denote the upper endpoint of the support of the distribution associated with $H$, and fix any $\e\in (0,\frac{1}{4})$. Since $\hat H_{\textup{Q},n}$ is a non-decreasing function, we have
\begin{equation}\label{eqn:LWalmost}
\begin{split}
     \sup_{t\in\R}  | \hat{H}_{\textup{Q},n}  -  \tilde{H}_{n}(t) | & \ \leq \
     1-\hat{H}_{\textup{Q},n}(u+\e)1\{\hat b_n\geq u+\e\}.
     \end{split}
\end{equation}
By the definition of $u$, the value $u+\e$ is a continuity point of $H$ with $H(u+\e)=1$, and so the limit~\eqref{eqn:prelimLW} implies $\hat H_{\textup{Q},n}(u+\e)=1+o_{\P}(1)$. Hence, in order to show that the right side of~\eqref{eqn:LWalmost} converges to 0 in probability, it is enough to show that $\P(\hat b_n\geq u+\e)\to 1$.

We will handle this remaining task by showing instead that $\P(\hat b_n<u+\e)\to 0$. Recall that $\hat b_n=\lambda_1(\hat\Sigma_n)+1$ and note that the limit $H_n\Rightarrow H$ implies that $\lambda_1(\Sigma_n)\geq u-\e$ must hold for all large $n$. Therefore,  we have
\begin{equation}\label{eqn:probrightside}
\begin{split}
    \P(\hat b_n<u+\e) & \ \leq  \  \P\Big(\lambda_1(\hat\Sigma_n)\leq \lambda_1(\Sigma_n)+2\e -1 \Big)
\end{split}
\end{equation}
for all large $n$.
To derive an upper bound on the last probability, we will replace $\lambda_1(\hat\Sigma_n)$ with a smaller random variable, and then rearrange the event. Let $\mathbf{v}_1$ denote an eigenvector of $\Sigma_n$ corresponding to $\lambda_1(\Sigma_n)$ with $\|\mathbf{v}_1\|_2=1$. Defining the random matrix $W_n=\frac{1}{n}\sum_{i=1}^n \xi_i^2 \u_i\u_i\ttop$, the variational representation of $\lambda_1(\hat\Sigma_n)$ gives
$\lambda_1(\hat\Sigma_n) \geq \lambda_1(\Sigma_n) \mathbf{v}_1' W_n \mathbf{v}_1$.
Also note that our choice of $\e$ ensures $2\e-1\leq -1/2$. This yields the following bounds for all large $n$,
\begin{equation}
\begin{split}
 \P(\hat b_n<u+\e) 
 & \ \leq  \ \P\Big( \lambda_1(\Sigma_n)|\mathbf{v}_1\ttop W_n\mathbf{v}_1-1|\geq 1/2\Big)\\[0.2cm]
 & \ \leq \ 4\lambda_1(\Sigma_n)^2\var\big(\mathbf{v}_1\ttop W_n\mathbf{v}_1\big)\\[0.2cm]
 & \ = \ \ts\frac{4\lambda_1(\Sigma_n)^2}{n}\,\var\big(\xi_1^2 \u_1\ttop (\mathbf{v}_1\mathbf{v}_1\ttop) \u_1\big)\\[0.2cm]
 & \ = \ \ts\frac{4\lambda_1(\Sigma_n)^2}{n}\,\left(3\frac{\E(\xi_1^4)}{p(p+2)}-1\right) \ \ \ \ \ \ \ \ \ \ \ \text{(Lemma~\ref{lem:quadform})}\\[0.0cm] 
 & \ \lesssim \ \ts\frac{1}{n},
 \end{split}
\end{equation}
where the last step uses Assumptions~\ref{Data generating model}~and~\ref{Regularity of spectrum}.\qed

\begin{lemma}\label{lem:LW}
    Suppose that Assumptions~\ref{Data generating model} and~\ref{Regularity of spectrum} hold. Then, the following limit holds almost surely as $n\to\infty$
    \begin{equation}\label{eqn:LWlimit}
        \hat H_{\textup{Q},n} \Rightarrow H.
    \end{equation}
\end{lemma}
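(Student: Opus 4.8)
The plan is to deduce the claim from the consistency theorem for QuEST established in~\cite{ledoit2015spectrum}, by checking that its hypotheses --- which are statements about the asymptotic behavior of the sample eigenvalues alone --- remain valid for elliptical data. Recall that QuEST operates as a minimum-distance estimator: it selects $\hat H_{\textup{Q},n}$, equivalently the numbers $\hat\lambda_{\textup{Q},1}\geq\cdots\geq\hat\lambda_{\textup{Q},p}$, so that the distribution $\Psi(\hat H_{\textup{Q},n},c_n)$ obtained by pushing $\hat H_{\textup{Q},n}$ through a discretized Mar\v{c}enko-Pastur equation is as close as possible (in a least-squares sense on quantiles) to the empirical spectral distribution $\hat H_n$ of $\hat\Sigma_n$. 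Consequently, two ingredients drive the consistency proof: (i) the sample eigenvalue distribution $\hat H_n$ converges to the ``correct'' limit $\Psi(H,c)$, and (ii) the map $\Psi(\cdot,c)$ is well-behaved enough --- continuous and, in the relevant sense, injective --- that matching its output pins down its input.

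For ingredient (i), the key point is that the extended Mar\v{c}enko-Pastur law continues to hold in the elliptical model: by~\cite{Bai:Zhou:2008}, under Assumptions~\ref{Data generating model} and~\ref{Regularity of spectrum} one has $\hat H_n \Rightarrow \Psi(H,c)$ almost surely. This is precisely where ellipticity gets absorbed --- despite the correlations among the entries of $\x_i$, the shared scalar $\xi_i$ with $\E(\xi_i^2)=p$ does not alter the first-order limiting spectral distribution, so the input to the QuEST consistency argument looks exactly as it would in an MP model with population covariance $\Sigma_n$. Ingredient (ii) is a purely deterministic analytic property of the map $\Psi$ and is unaffected by the data-generating mechanism, so it carries over verbatim from~\cite{ledoit2015spectrum}.

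Beyond bulk convergence, the QuEST analysis also requires control of the edges of the sample spectrum: the largest sample eigenvalue must remain bounded, which is supplied by Lemma~\ref{lem:largest eigenvalues}; the bounded-support condition on $H_n$ in Assumption~\ref{Regularity of spectrum} and the condition $c_n\to c\neq 1$ in Assumption~\ref{Data generating model} are the remaining structural hypotheses of~\cite{ledoit2015spectrum}; and, in the regime $c<1$, one additionally needs the smallest sample eigenvalue to stay bounded away from $0$. With these facts assembled, the consistency theorem of~\cite{ledoit2015spectrum} applies and delivers $\hat H_{\textup{Q},n}\Rightarrow H$ almost surely.

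I expect the main obstacle to be a careful audit rather than a new estimate: one must verify that the proof of QuEST's consistency in~\cite{ledoit2015spectrum} genuinely consumes only the almost-sure weak convergence of $\hat H_n$ together with the edge controls above, and does not implicitly rely on features specific to MP matrices (independence of coordinates, entrywise or anisotropic local laws, finer deterministic equivalents). This means isolating the exact list of asymptotic statements about $\{\lambda_j(\hat\Sigma_n)\}$ that the argument uses and matching each against Assumptions~\ref{Data generating model}--\ref{Regularity of spectrum} via~\cite{Bai:Zhou:2008} and Lemma~\ref{lem:largest eigenvalues}. A secondary subtlety, when $c\in(0,1)$, is the hard-edge lower bound on $\lambda_p(\hat\Sigma_n)$, which does not follow from the bulk law alone and may require a separate argument adapted to elliptical sample covariance matrices.
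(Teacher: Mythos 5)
Your proposal tracks the paper's proof closely: both reduce to the consistency argument for QuEST in \cite{ledoit2015spectrum}, and both verify its hypotheses under the elliptical model by invoking (i) Theorem 1.1 of \cite{Bai:Zhou:2008} for the almost-sure weak limit $\hat H_n\Rightarrow\Psi(H,c)$, (ii) Lemma~\ref{lem:largest eigenvalues} for an almost-sure bound $\limsup_n\lambda_1(\hat\Sigma_n)\leq C$, and (iii) the regularity conditions on $H_n$ and $c_n\to c\neq 1$ from Assumptions~\ref{Data generating model}--\ref{Regularity of spectrum}. This is exactly the paper's structure, and your observation that the deterministic continuity/injectivity properties of the map $\Psi(\cdot,c)$ carry over verbatim is the same reduction the paper makes implicitly by citing \cite{ledoit2015spectrum} wholesale.

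The one place where you diverge is your flagged ``secondary subtlety'' about a hard-edge lower bound on $\lambda_p(\hat\Sigma_n)$ when $c\in(0,1)$. The paper does not supply such a bound and indeed asserts outright that the two facts above are \emph{sufficient} to rerun the Ledoit--Wolf argument; in particular it treats the upper-edge control plus the bulk Mar\v{c}enko--Pastur law as the complete list of probabilistic inputs. Your instinct to audit whether the QuEST consistency proof truly consumes only those inputs is sound methodology, but as written the paper takes the opposite stance and does not address, or rely on, a $\liminf_n\lambda_p(\hat\Sigma_n)>0$ statement. So if you pursue this you should either verify from pp.\ 381--382 of \cite{ledoit2015spectrum} that the quantile-matching discrepancy is genuinely insensitive to an $o(p)$ set of small sample eigenvalues --- which would confirm the paper's claim of sufficiency --- or else supply the missing lower-edge control, which, as you correctly note, does not follow from the truncation argument in Lemma~\ref{lem:largest eigenvalues} and would require a separate no-outlier result for elliptical sample covariance matrices.
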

\proof In the paper~\citep[][pp.381-382]{ledoit2015spectrum}, the limit~\eqref{eqn:LWlimit} is established in the context of an IC model where $p/n\to c\in (0,\infty)\setminus \{1\}$, and the eigenvalues of the population covariance matrix $\Sigma_n$ satisfy Assumption~\ref{Regularity of spectrum}.
To adapt the proof from~\citep{ledoit2015spectrum} to our current setting, it is sufficient to show that the following two facts hold under Assumptions~\ref{Data generating model} and~\ref{Regularity of spectrum}. First, there is a constant $C>0$ such that the bound $\limsup_{n\to\infty}\lambda_1(\hat\Sigma_n)\leq C$ holds almost surely, which we prove later using a truncation argument in Lemmas~\ref{lem:largest eigenvalues} and~\ref{lem:extra}. Second, the random distribution function $\hat H_n(t)=\frac{1}{p}\sum_{j=1}^p 1\{\lambda_j(\hat{\Sigma}_n)\leq t\}$ satisfies $\hat H_n\Rightarrow \Psi(H,c)$ almost surely, where we recall that the distribution $\Psi(H,c)$ is defined near equation~\eqref{eqn:MPdef}. The validity of this second fact under our current assumptions is a consequence of Theorem~1.1 in~\citep{Bai:Zhou:2008}.\qed

\subsection{Boundedness of sample eigenvalues}

\begin{lemma}
    \label{lem:largest eigenvalues}
   Under Assumptions \ref{Data generating model} and \ref{Regularity of spectrum}, there is a constant $C>0$ such that the bounds
    \begin{align}\label{eqn:2limsups}
        \limsup_{n\to\infty}\lambda_{1}({\hat{\Sigma}_n}) &\leq C \text{ \ \ \ \ \  \ and \ \  \ \ \ \ }     \limsup_{n\to\infty}\lambda_{1}({\tilde\Sigma_n}) \leq C
    \end{align}
    hold almost surely.
\end{lemma}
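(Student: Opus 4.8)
The plan is to handle the bound on $\lambda_1(\tilde\Sigma_n)$ by reducing it to the bound on $\lambda_1(\hat\Sigma_n)$, and then to put all the effort into the latter. Since $\tilde\lambda_j=\min(\hat\lambda_{\textup{Q},j},\hat b_n)$ with $\hat b_n=\lambda_1(\hat\Sigma_n)+1$, we have $\lambda_1(\tilde\Sigma_n)=\max_{1\le j\le p}\tilde\lambda_j\le\hat b_n=\lambda_1(\hat\Sigma_n)+1$, so any almost-sure bound $C$ on $\limsup_n\lambda_1(\hat\Sigma_n)$ yields the bound $C+1$ for $\limsup_n\lambda_1(\tilde\Sigma_n)$, and enlarging $C$ gives a single constant for both. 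To bound $\lambda_1(\hat\Sigma_n)$, I would write $\hat\Sigma_n=\Sigma_n^{1/2}W_n\Sigma_n^{1/2}$ with $W_n=\frac1n\sum_{i=1}^n\xi_i^2\,\u_i\u_i\ttop$, so that submultiplicativity of the operator norm gives $\lambda_1(\hat\Sigma_n)\le\lambda_1(\Sigma_n)\,\lambda_1(W_n)$. Under Assumption~\ref{Regularity of spectrum} the factor $\lambda_1(\Sigma_n)$ lies in a fixed compact subinterval of $(0,\infty)$ for all large $n$, so the problem reduces to proving $\limsup_n\lambda_1(W_n)<\infty$ almost surely.

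For this, I would use a truncation argument driven by Borel--Cantelli. Since $\E(\xi_1^2)=p$, the condition $\E\big|\tfrac{\xi_1^2-p}{\sqrt p}\big|^{4+\varepsilon}\lesssim1$ from Assumption~\ref{Data generating model} together with Markov's inequality gives, for any fixed constant $K>1$, the bound $\P(\xi_1^2>Kp)\lesssim p^{-(4+\varepsilon)/2}$. A union bound over $i=1,\dots,n$ and the relation $p\asymp n$ then yield $\P(\max_{1\le i\le n}\xi_i^2>Kp)\lesssim n\,p^{-(4+\varepsilon)/2}\asymp n^{-1-\varepsilon/2}$, which is summable; this is exactly the place where integrability beyond a fourth moment is needed. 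By Borel--Cantelli, almost surely there is a random $n_0$ with $\xi_i^2\le Kp$ for all $i\le n$ whenever $n\ge n_0$. On this event the matrix inequality $\xi_i^2\,\u_i\u_i\ttop\preceq Kp\,\u_i\u_i\ttop$ gives $W_n\preceq Kp\cdot\frac1n\sum_{i=1}^n\u_i\u_i\ttop$, hence $\lambda_1(W_n)\le Kp\,\lambda_1\big(\tfrac1n\sum_{i=1}^n\u_i\u_i\ttop\big)$.

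It then remains to show $\limsup_n p\,\lambda_1\big(\tfrac1n\sum_{i=1}^n\u_i\u_i\ttop\big)<\infty$ almost surely. Here I would use the representation $\u_i=\mathbf{g}_i/\|\mathbf{g}_i\|_2$ with $\mathbf{g}_1,\dots,\mathbf{g}_n$ i.i.d.\ standard Gaussian vectors in $\R^p$. Chi-square concentration gives $\P(\|\mathbf{g}_1\|_2^2<p/2)\le e^{-cp}$ for a constant $c>0$, so another union bound and Borel--Cantelli yield $\min_{1\le i\le n}\|\mathbf{g}_i\|_2^2\ge p/2$ for all large $n$ almost surely; on that event $\frac1n\sum_i\u_i\u_i\ttop\preceq\frac2p\cdot\frac1n\sum_i\mathbf{g}_i\mathbf{g}_i\ttop$, so $p\,\lambda_1\big(\tfrac1n\sum_i\u_i\u_i\ttop\big)\le2\,\lambda_1\big(\tfrac1n\sum_i\mathbf{g}_i\mathbf{g}_i\ttop\big)$. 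Finally, the Bai--Yin theorem gives $\lambda_1\big(\tfrac1n\sum_i\mathbf{g}_i\mathbf{g}_i\ttop\big)\to(1+\sqrt c)^2$ almost surely, so this quantity is almost surely bounded. Intersecting the three almost-sure eventual statements --- truncation of the $\xi_i^2$, the lower bound on the $\|\mathbf{g}_i\|_2$, and boundedness of the Gaussian largest eigenvalue --- produces $\limsup_n\lambda_1(W_n)\le 2K(1+\sqrt c)^2$ almost surely, which completes the proof. (I would likely isolate the spherical-matrix bound as an auxiliary lemma.)

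The main obstacle is the unboundedness of the $\xi_i^2$: if $\xi_i^2/p$ were uniformly bounded, the reduction to the spherical/Wishart matrix would be immediate, and the only reason the argument requires the full $(4+\varepsilon)$-moment assumption rather than a bare fourth moment is to make $\sum_n\P(\max_{i\le n}\xi_i^2>Kp)$ converge, so that Borel--Cantelli delivers an \emph{almost sure} rather than an in-probability bound. A secondary technical point is that each of the three ingredients must be phrased as an almost-sure eventual statement so that the corresponding events can be intersected at the end without any loss.
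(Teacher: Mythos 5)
Your proof is correct and follows essentially the same route as the paper: reduce $\tilde\Sigma_n$ to $\hat\Sigma_n$ via $\tilde\lambda_j\le\hat b_n$, truncate the $\xi_i^2$ and apply Borel--Cantelli using the $(4+\varepsilon)$-moment condition, then represent $\u_i$ via standard Gaussians, lower-bound $\min_i\|\mathbf{z}_i\|_2^2/p$ by $\chi^2$ concentration, and invoke the Bai--Yin theorem. The only organizational difference is that the paper truncates at $|\xi_i^2-p|<\sqrt{pn}$ and packages the Borel--Cantelli step as a separate lemma (Lemma~\ref{lem:extra}) using Yin's dyadic-blocking argument, whereas you truncate at $Kp$ and use the direct union bound $\P(\max_{i\le n}\xi_i^2>Kp)\lesssim n\,p^{-(4+\varepsilon)/2}\asymp n^{-1-\varepsilon/2}$, which yields the same summable tail in a marginally simpler way.
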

\proof Since the estimates $\tilde\lambda_j=\lambda_j(\tilde\Sigma_n)$ are bounded above by $\hat b_n=\lambda_1(\hat\Sigma_n)+1$ for all $j\in\{1,\dots,p\}$, it is enough to focus on the first inequality in~\eqref{eqn:2limsups}. Define a sequence of truncated random variables $\breve{\xi}_i=\xi_i\, 1\{|\xi_i^2-p|< \sqrt{pn}\}$ for $i=1,\dots,n$, as well as the following truncated version of $\hat\Sigma_n$, 
\begin{equation}\label{eqn:truncsigmadef}
    \breve \Sigma_n = \frac 1n \sum_{i=1}^n \breve \xi_i^2 \,\Sigma_n^{1/2}\u_i\u_i\ttop\Sigma_n^{1/2}.
\end{equation}
Lemma~\ref{lem:extra} below shows that $\P(\hat\Sigma_n\neq \breve \Sigma_n \  \text{i.o.})=0$.
Consequently, it suffices to show there is a constant $C>0$ such that $\limsup_{n\to\infty}\lambda_1(\breve\Sigma_n)\leq C$ holds almost surely.

Since the vectors $\u_1,\dots,\u_n$ are uniformly distributed on the unit sphere of $\R^p$, we may express them as $\u_i=\mathbf{z}_i/\|\mathbf{z}_i\|_2$ for a sequence of i.i.d.~standard Gaussian vectors $\mathbf{z}_1,\dots,\mathbf{z}_n$ in $\R^p$. This yields
\begin{equation}
    \breve \Sigma_n = \frac{1}{n}\sum_{i=1}^n \frac{\breve \xi_i^2/p}{\|\mathbf{z}_i\|_2^2/p} \Sigma_n^{1/2}\mathbf{z}_i\mathbf{z}_i\ttop\Sigma_n^{1/2}.
\end{equation}
By construction, we have $\max_{1\leq i\leq n} \breve\xi_i^2/p\leq 1+1/\sqrt{c_n}$ for all $n\geq 1$. Also, using standard tail bounds for the $\chi_p^2$ distribution and the Borel-Cantelli lemma, it is straightforward to show that $\liminf_{n\to\infty} \min_{1\leq i\leq n}\|\mathbf{z}_i\|_2^2/p$ is at least $1/2$ almost surely.  Taken together, these observations imply there is a constant $C>0$ such that the bound
\begin{equation}
    \lambda_1(\breve\Sigma_n) \ \leq \ C\,\lambda_1(\Sigma_n)\,\lambda_1\Big(\ts\frac 1n \sum_{i=1}^n \mathbf{z}_i\mathbf{z}_i\ttop\Big)
\end{equation}
holds almost surely for all large $n$. In addition, note that $\sup_{n\geq 1}\lambda_1(\Sigma_n)\lesssim 1$ holds by Assumption~\ref{Regularity of spectrum}. Lastly, it is known from~\citep[][Theorem 3.1]{Yin} that the limit
$$\lim_{n\to\infty}\lambda_1\Big(\ts\frac 1n \sum_{i=1}^n \mathbf{z}_i\mathbf{z}_i\ttop\Big) \ = \ (1+\sqrt c)^2$$
holds almost surely, which completes the proof.\qed

\begin{lemma}\label{lem:extra}
Suppose that Assumption~\ref{Data generating model} holds, and let $\breve\Sigma_n$ be as defined in~\eqref{eqn:truncsigmadef}. Then,
\begin{equation}
    \P(\hat\Sigma_n\neq \breve \Sigma_n \  \textup{i.o.})=0.
\end{equation}
\end{lemma}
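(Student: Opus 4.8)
The plan is to apply the first Borel--Cantelli lemma, so it suffices to show that $\sum_{n=1}^\infty \P(\hat\Sigma_n\neq \breve\Sigma_n)<\infty$, where it should be kept in mind that, since the model changes with $n$, the event $\{\hat\Sigma_n\neq\breve\Sigma_n \text{ i.o.}\}$ refers to infinitely many indices $n$. First I would note that $\hat\Sigma_n$ and $\breve\Sigma_n$ are assembled from the same random vectors $\u_1,\dots,\u_n$, differing only in that $\breve\Sigma_n$ uses the truncated scalars $\breve\xi_i^2=\xi_i^2\,1\{|\xi_i^2-p|<\sqrt{pn}\}$ in place of $\xi_i^2$. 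Hence the event $\{\hat\Sigma_n\neq\breve\Sigma_n\}$ is contained in $\bigcup_{i=1}^n\{|\xi_i^2-p|\geq\sqrt{pn}\}$, and since $(\u_1,\xi_1),\dots,(\u_n,\xi_n)$ are i.i.d., a union bound gives $\P(\hat\Sigma_n\neq\breve\Sigma_n)\leq n\,\P(|\xi_1^2-p|\geq\sqrt{pn})$.

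Next I would control the tail probability using the moment condition in~\eqref{eqn:moment assumption}. Rewriting the event as $\{|(\xi_1^2-p)/\sqrt p|\geq\sqrt n\}$ and applying Markov's inequality to the $(4+\varepsilon)$-th power yields $\P(|\xi_1^2-p|\geq\sqrt{pn})\leq n^{-(4+\varepsilon)/2}\,\E\big|(\xi_1^2-p)/\sqrt p\big|^{4+\varepsilon}\lesssim n^{-(4+\varepsilon)/2}$, where the implied constant does not depend on $n$ because Assumption~\ref{Data generating model} furnishes a bound on this moment that is uniform for all large $n$. Combining the two estimates gives $\P(\hat\Sigma_n\neq\breve\Sigma_n)\lesssim n\cdot n^{-(4+\varepsilon)/2}=n^{-1-\varepsilon/2}$, which is summable in $n$ since $\varepsilon>0$, and Borel--Cantelli then delivers the claim.

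There is no serious obstacle in this argument; the only points requiring care are the bookkeeping in the union bound over $i=1,\dots,n$ together with the change of model in $n$, and observing that it is precisely the extra integrability in Assumption~\ref{Data generating model}---the full $4+\varepsilon$ moment on $(\xi_1^2-p)/\sqrt p$, rather than the $2+\varepsilon$ moment used in~\citep{hu2019aos}---that makes the resulting series converge and hence justifies the truncation level $\sqrt{pn}$.
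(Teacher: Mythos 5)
Your argument is correct, and it takes a genuinely different (and simpler) route than the paper. The paper adapts the classical dyadic-blocking device from Yin: it writes the i.o.\ event as a tail union, partitions the index $n$ into dyadic ranges $[2^{m-1},2^m)$, controls the maximum of $|(\xi_i^2-p)/\sqrt{p}|$ over each block via the elementary maximal inequality $\E|\max_{i\leq N}Y_i|^q\leq N\max_i\E|Y_i|^q$ and Chebyshev, and sums the resulting geometric tail $\sum_m 2^{-m\varepsilon/2}$. You instead observe that the direct union bound $\P(\hat\Sigma_n\neq\breve\Sigma_n)\leq n\,\P(|(\xi_1^2-p)/\sqrt p|\geq\sqrt n)\lesssim n^{-1-\varepsilon/2}$ already yields a summable series, so the first Borel--Cantelli lemma closes the argument with no blocking at all. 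Both proofs lean on the same $(4+\varepsilon)$ moment---with only a fourth moment, neither your series nor the paper's geometric sum would converge at this truncation level---so the extra $\varepsilon$ of integrability in Assumption~\ref{Data generating model} is doing the same work in both. What your version buys is brevity and transparency: in a triangular-array setup where the $\xi_i$ may be redefined with each $n$, there is no need to group indices dyadically, since Borel--Cantelli requires only summability of the $\P(A_n)$ and is indifferent to the dependence between rows. The paper's blocking is a remnant of the classical i.i.d.-sequence setting, where the maximum-over-blocks refinement can be necessary under tighter moment assumptions, but it is not needed here.
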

\proof We adapt a classical argument from~\citep{Yin}. For a fixed number $n$, the matrices $\hat\Sigma_n$ and $\breve\Sigma_n$ can only disagree if $|\xi_i^2-p|\geq \sqrt{pn}$ for at least one $i=1,\dots,n$, and so
\begin{equation}
     \P(\hat\Sigma_n\neq \breve \Sigma_n \  \textup{i.o.}) \ \leq \ \lim_{j\to\infty}\P\bigg(\bigcup_{n=j}^{\infty}\bigcup_{i=1}^n \{|\xi_i^2-p|\geq \sqrt{pn}\}\bigg).
\end{equation}
Next, we partition the values of $n$ into the intervals $[2^{m-1},2^m),[2^{m},2^{m+1}),\dots$ and take a union bound across the intervals, yielding
\begin{equation}\label{eqn:bigmax}
    \begin{split}
             \P(\hat\Sigma_n\neq \breve \Sigma_n \  \textup{i.o.}) 
             &\ \leq \ \lim_{j\to\infty}\sum_{m=j}^{\infty}\P\Bigg(\max_{\underset{2^{m-1}\leq n< 2^m}{1\leq i\leq n}}|\ts\frac{\xi_i^2-p}{\sqrt p}| \, \geq \, \sqrt{2^{m-1}}\Bigg).
    \end{split}
\end{equation}
For a generic sequence of random variables $Y_1,\dots,Y_N$ and number $q\geq 1$, recall the standard maximal inequality
\begin{equation}\label{eqn:maximal}
\E\Big|\!\max_{1\leq i\leq N} Y_i\Big|^{q} \ \leq \ N\max_{1\leq i\leq N}\E|Y_i|^q.
\end{equation}
Since the number of pairs $(i,n)$ in the maximum in~\eqref{eqn:bigmax} is at most $2^{2m}$, if we apply Chebyshev's inequality to~\eqref{eqn:bigmax} and use the condition $ \E|\ts\frac{\xi_1^2-p}{\sqrt p}|^{4+\varepsilon}\lesssim 1$ from Assumption~\ref{Data generating model}, then for each $m\geq 1$ we have
\begin{equation}\label{eqn:firstYin}
\begin{split}
    \P\Bigg(\max_{\underset{2^{m-1}\leq n\leq 2^m}{1\leq i\leq n}}|\ts\frac{\xi_i^2-p}{\sqrt p}|\, \geq \, \sqrt{2^{m-1}}\Bigg)%
     & \ \lesssim \  \frac{2^{2m}}{\big(\sqrt{2^{m-1}}\big)^{4+\varepsilon}}\\[0.2cm]
     & \ \lesssim \ 2^{-m\varepsilon/2}.
     \end{split}
\end{equation}
 Hence, we may insert this bound into~\eqref{eqn:bigmax} to conclude that 
$$\P(\hat\Sigma_n\neq \breve \Sigma_n \  \textup{i.o.})  \ \lesssim \ \lim_{j\to\infty}\sum_{m=j}^{\infty} 2^{-m\varepsilon/2} \ = \ 0,$$
which completes the proof.\qed

\section{Proof of Theorem \ref{thm:main}}\label{app:mainproof}
    Let $\mathbf{z}\in\R^k$ denote a Gaussian random vector to be described in a moment, and consider the triangle inequality
    \begin{equation}
        d_{\mathrm{LP}}\Big(\mathcal{L}(p\big\{T_{n}(\mathbf{f})-\vartheta_{n}(\mathbf{f})\big\}\big) \, , \, \mathcal{L}\big(p\big\{T_{n, 1}^{*}(\mathbf{f})-\tilde{\vartheta}_{n}(\mathbf{f})\big\} \big| X\big)\Big) 
        \ \leq \ \textup{I}_n \ + \ \textup{II}_n,
    \end{equation} 
    where we define
  \begin{align}
              \textup{I}_n & \ = \  d_{\mathrm{LP}}\Big(\mathcal{L}\big(p\big\{T_{n}(\mathbf{f})-\vartheta_{n}(\mathbf{f})\big\}\big), \mathcal{L}(\mathbf{z})\Big) \\[0.2cm]
        \textup{II}_n & \ = \  d_{\mathrm{LP}}\Big(\mathcal{L}(\mathbf{z})\, ,\, \mathcal{L}\big(p\big\{T_{n, 1}^{*}(\mathbf{f})-\tilde{\vartheta}_{n}(\mathbf{f})\} \big| X\big)\Big).
  \end{align}
To handle the terms $\textup{I}_n$ and $\textup{II}_n$, we will apply a central limit theorem for linear spectral statistics established in \citep[][]{hu2019aos}, which relies on the following two conditions when $c_n\to c>0$ as $n\to\infty$:
\begin{itemize}
    \item[(a).] The elliptical model in Assumption~\ref{Data generating model} holds with the conditions on $\xi_1^2$ being replaced by
    \begin{equation}\label{eqn:xireqs}
      \E(\xi_1^2)=p,   \ \  \ \ \var\Big(\ts\frac{\xi_1^2-p}{\sqrt p}\Big)= \tau+o(1) \ \ \ \ \ \text{ and } \ \  \ \  \E\Big|\ts\frac{\xi_1^2-p}{\sqrt p}\Big|^{2+\varepsilon}\lesssim 1,
    \end{equation}
    for some constants constants $\tau\geq 0$ and $\varepsilon>0$ that do not depend on $n$.\\[-.2cm]
    \item[(b).] There is a distribution $H$ such that $H_n\Rightarrow H$ as $n\to \infty$, and $\lambda_1(\Sigma_n)\lesssim 1$.
\end{itemize}
Under these conditions, Theorem 2.2 in~\citep{hu2019aos} ensures there exists a Gaussian distribution $\mathcal{L}(\mathbf{z})$ depending only on $(\mathbf{f},H,c,\tau)$ such that $\textup{I}_n\to  0$ as $n\to\infty$.

To finish the proof, we must show $\textup{II}_n\xrightarrow{\P}0$. This can be done using the mentioned central limit theorem, but some extra considerations are involved, due to the fact that $\textup{II}_n$ is random. It is sufficient to show that for any subsequence $J\subset\{1,2,\dots\}$, the limit $\textup{II}_n\to 0$ holds almost surely along a further subsequence of $J$. Since the bootstrap data are generated from an elliptical model that is parameterized in terms of $\mathcal{L}((\xi_1^*)^2|X)$ and $\tilde \Sigma_n$, this amounts to verifying that $\mathcal{L}((\xi_1^*)^2|X)$ and $\tilde H_n$ satisfy analogues of (a) and (b) almost surely along a subsequence of $J$.

To proceed, recall that Algorithm~\ref{alg:boot} is designed so that $\E((\xi_1^*)^2|X)=p$ and $\var((\xi_1^*)^2|X)=\hat\varsigma_n^2$. Also, note that under Assumption~\ref{Data generating model}, the parameter $\varsigma_n^2=\var(\xi_1^2)$ satisfies $\varsigma_n^2/p\to \tau$. Consequently, Theorem~\ref{thm:estimators} implies $\hat\varsigma_n^2/p=\tau+o_{\P}(1)$, and so there is a subsequence $J'\subset J$ along which the limit $\hat\varsigma_n^2/p\to \tau$ holds almost surely. In other words, the limit
\begin{equation}
    \var\!\Big(\ts\frac{(\xi_1^*)^2-p}{\sqrt p}\Big| X\Big) \ \to \ \tau
\end{equation}
holds almost surely along $J'$.
Moreover, since $\mathcal{L}((\xi_1^*)^2|X)$ is a Gamma distribution with mean $p$ and variance $\hat\varsigma_n^2$, Lemma~\ref{lem:gamma} implies there is a constant $C>0$ not depending on $n$ such that the bound
\begin{equation}\label{eqn:conditionalmomentbound}
\Big(\E\Big(\big|\ts\frac{(\xi_1^*)^2-p}{\sqrt p}\big|^6\Big|X\Big)\Big)^{1/6}\leq
C\big(\ts\frac{\hat\varsigma_n}{\sqrt p}+\ts\frac{\hat\varsigma_n^2}{p^{3/2}}\big)
\end{equation}
holds almost surely. Therefore, the left side of~\eqref{eqn:conditionalmomentbound} is bounded almost surely along $J'$.

With regard to the empirical spectral distribution $\tilde H_n$ associated with the matrix $\tilde\Sigma_n$, it satisfies the limit $\tilde H_n\Rightarrow H$ almost surely along a further subsequence $J''\subset J'$, due to Theorem~\ref{thm:estimators}. In addition, Lemma~\ref{lem:largest eigenvalues} ensures there is a constant $C>0$ such that $\limsup_{n\to\infty}\lambda_1(\tilde\Sigma_n)\leq C$ almost surely. Altogether, it follows that $\mathcal{L}((\xi_1^*)^2|X)$ and $\tilde H_n$ simultaneously satisfy analogues of (a) and (b) almost surely along $J''$, which implies $\textup{II}_n\xrightarrow{\P}0$.\qed

\section{Proof of Theorem \ref{thm:r}}\label{app:r}
To begin the proof, we need to introduce three auxiliary statistics defined by
\begin{align}\label{eqn:rbrevedef}
    \breve{r}_n &= \frac{\tr(\hat\Sigma_n)^2}{\tr(\hat\Sigma_n^2) - \frac{1}{n}\tr(\hat\Sigma_n)^2}  \\[0.2cm]
    \breve{r}_n^* & = \frac{\tr(\hat{\Sigma}^*_n)^2}{\tr((\hat{\Sigma}^*_n)^2) - \frac{1}{n}\tr(\hat{\Sigma}_n^*)^2}\\[0.2cm]
\tilde r_n &=\frac{\tr(\tilde\Sigma_n)^2}{\tr(\tilde\Sigma_n^2)}.
\end{align}
Also, we define $\hat r_n^*$ as the statistic obtained by applying the formula~\eqref{eqn:rhatdef} for $\hat r_n$ to the bootstrap data $\x_1^*,\dots,\x_n^*$. The primary task is to establish the following four limits, which are established later in this appendix in Propositions~\ref{prop:delta} and~\ref{prop:correction}. Specifically, these results show that there exists a non-degenerate Gaussian random variable $\zeta$ and a constant $a$, such that as $n\to\infty$,
\begin{align}
    \mathcal{L}(\breve r_n - r_n) & \ \Rightarrow \ \mathcal{L}(\zeta),\label{eqn:zetalim1}\\[0.2cm]
\mathcal{L}\big( \breve r_n^* - \tilde r_n\big|X\big) & \ \xRightarrow{\P} \ \mathcal{L}(\zeta),\label{eqn:zetalim2}\\[0.2cm]
    \mathcal{L}(\hat r_n -\breve r_n) &  \ \Rightarrow  \  \mathcal{L}(a),\label{eqn:alim1}\\[0.2cm]
     \mathcal{L}(\hat r_n^*-\breve r_n^*\big| X\big) & \ \xRightarrow{\P} \  \mathcal{L}(a),\label{eqn:alim2}
\end{align}
where $\mathcal{L}(a)$ denotes the point mass distribution at $a$. (In the current section, we sometimes use the notation for weak convergence in limits where convergence in probability holds, because it will help to clarify how $\hat r_n^*-\breve r_n^*$ can be analyzed in an analogous manner to $\hat r_n-\breve r_n$.) Using Slutsky's lemma, it follows from the limits~\eqref{eqn:zetalim1} and~\eqref{eqn:alim1} that
\begin{align}
    \mathcal{L}(\hat r_n - r_n) &  \ \Rightarrow  \  \mathcal{L}(\zeta+a).
    \end{align}
Analogously, Slutsky's lemma can be applied in a conditional manner to the limits~\eqref{eqn:zetalim2} and~\eqref{eqn:alim2}, yielding
    \begin{align}
    \mathcal{L}\big( \hat r_n^* - \tilde r_n\big|X\big) & \ \xRightarrow{\P} \ \mathcal{L}(\zeta+a).
\end{align} 
Since the limiting distribution  $\mathcal{L}(\zeta+a)$ is continuous, P\'olya's theorem implies 
\begin{align}
    \sup_{t\in\R}\Big|\P(\hat r_n-r_n\leq t) - \P(\hat r_n^*-\tilde r_n \leq t|X)\Big| \ \xrightarrow{\P} \  0.
\end{align}
Due to this uniform limit and the continuity of the distribution $\mathcal{L}(\zeta+a)$, standard arguments can be used to show that the quantiles of  $\mathcal{L}(\hat r_n^*-\tilde r_n|X)$  are asymptotically equivalent to those of $\mathcal{L}(\hat r_n-r_n)$. (For example, see the proof of Lemma 10.4 in~\citep{lopes2022}.) More precisely, if we note that the quantile estimate $\hat q_{1-\alpha}$ defined in Section~\ref{sec:rank} is the same as the $(1-\alpha)$-quantile of  $ \mathcal{L}\big(\frac 1p (\hat r_n^*-\tilde r_n)|X\big) $, then as $n\to\infty$ we have the limit
\begin{equation}\label{eqn:lasthyplimit}
    \P\big( \ts\frac 1p(\hat r_n-r_n) > \hat q_{1-\alpha}\big) \to \alpha.
\end{equation}
This limit directly implies the first two statements~\eqref{eqn:Hlim1} and~\eqref{eqn:Hlim2} in Theorem~\ref{thm:r}. Regarding the third statement~\eqref{eqn:Hlim3}, if $\mathsf{H}_{0,n}'$ holds for all large $n$, then replacing $\tilde\Sigma_n=\textup{diag}(\tilde\lambda_1,\dots,\tilde\lambda_p)$ with $\tilde \Sigma_n=I_p$ does not affect the reasoning leading up to~\eqref{eqn:lasthyplimit}, because this replacement does not affect the proofs of Propositions~\ref{prop:delta} and~\ref{prop:correction} given later.
Consequently, if $\mathsf{H}_{0,n}'$ holds for all large $n$, then we have $\P\big(\frac 1p(\hat r_n-1)\leq \hat q_{\alpha}'\big)\to \alpha$ as $n\to\infty$, which completes the proof.\qed

~\\

\begin{proposition}\label{prop:delta}
Under Assumptions~\ref{Data generating model} and~\ref{Regularity of spectrum}, the limits~\eqref{eqn:zetalim1} and~\eqref{eqn:zetalim2} hold as $n\to\infty$.
\end{proposition}

\proof The proof is based on viewing $\breve r_n$ as a nonlinear function of $T_n(\mathbf{f})$, where the components of $\mathbf f =(f_1,f_2)$ are taken to be $f_1(x)=x$ and $f_2(x)=x^2$. In this case, the centering parameter $\vartheta_n(\mathbf{f})$ defined by~\eqref{eqn:varthetadef}  reduces to
\begin{equation}\label{eqn:vartheta2}
    \vartheta_n(\mathbf f)  
    \ = \  \left(\ts\frac{1}{p}\tr(\Sigma_n)\,,\, \ts\frac{1}{p}\tr(\Sigma_n^2) + \frac{1}{np}\tr(\Sigma_n)^2 \right),
\end{equation}
as recorded in Lemma 2.16 of~\citep{yao2015sample}. Consequently, by considering the function 
\begin{equation}\label{eqn:gndef}
g_n(x_1,x_2) = \ts\frac{x_1^2}{x_2 - c_n x_1^2},
\end{equation}
we have the key relation
\begin{equation}\label{eqn:firstdiff}
    \breve{r}_n - r_n \ = \  p\big\{g_n(T_n(\mathbf{f})) - g_n(\vartheta_{n}(\mathbf{f})) \big\}.
\end{equation}
We can also develop a corresponding relation for the bootstrap statistic $\breve r_n^*-\tilde r$. 
Due to Lemma 2.16 in~\citep{yao2015sample} and our choice of $\mathbf f$, the definition of $\tilde\vartheta_n(\mathbf f)$ below~\eqref{eqn:varthetadef} implies 
\begin{equation}\label{eqn:tildevartheta2}
        \tilde \vartheta_n(\mathbf f)  
        \ = \  \left(\ts\frac{1}{p}\tr(\tilde \Sigma_n) \, , \, \ts\frac{1}{p}\tr(\tilde\Sigma_n^2) + \frac{1}{np}\tr(\tilde \Sigma_n)^2 \right).
\end{equation}
Likewise, the bootstrap version of~\eqref{eqn:firstdiff} is
\begin{equation}\label{eqn:seconddiff}
    \breve{r}_n^* - \tilde r_n \ = \  p\big\{g_n(T_{n}^{*}(\mathbf{f})) - g_n(\tilde{\vartheta}_{n}(\mathbf{f}))\big \}.
\end{equation}

We will proceed by applying the delta method to the relations~\eqref{eqn:firstdiff} and~\eqref{eqn:seconddiff}. For this purpose, note that the proof of Theorem~\ref{thm:main} shows there exists a Gaussian random vector $\mathbf z\in\R^2$ such that as $n\to\infty$,
\begin{align}
    \mathcal{L}\big(p\big\{T_{n}(\mathbf{f})-\vartheta_{n}(\mathbf{f})\big\}\big) &  \ \Rightarrow  \  \mathcal{L}(\mathbf{z})\label{eqn:CLT2first}\\[0.2cm]
       \mathcal{L}\big(p\big\{T_{n, 1}^{*}(\mathbf{f})-\tilde{\vartheta}_{n}(\mathbf{f})\} \big| X\big) & \ \xRightarrow{\P} \ \mathcal{L}(\mathbf z).\label{eqn:CLT2second}
\end{align}
Also, to introduce some further notation, we refer to the $j$th moment of the distribution $H$ as
\begin{equation}
    \phi_j = \ts\int t^j dH(t),
\end{equation}
and we use these moments to define the parameter
\begin{equation}\label{eqn:thetaformula}
    \vartheta(\mathbf f) = (\phi_1,\phi_2+c\phi_1^2).
\end{equation}
This parameter arises in the following limits as $n\to\infty$,
\begin{equation}\label{eqn:twolimits}
    \vartheta_n(\mathbf f) \to \vartheta(\mathbf f) \ \ \ \ \  \ \text{ and } \ \ \ \  \ \ \tilde\vartheta_n(\mathbf f) \xrightarrow{\P} \vartheta(\mathbf f).
\end{equation}
To see why these limits hold, first note that by Assumption~\ref{Regularity of spectrum}, there is a compact interval containing support of $H_n$ for all large $n$, and by Lemma~\ref{lem:largest eigenvalues}, the same statement holds almost surely for $\tilde H_n$. Moreover, Assumption~\ref{Regularity of spectrum} and Theorem~\ref{thm:estimators} ensure the limits $H_n\Rightarrow H$ and $\tilde H_n\xRightarrow{\P} H$, and so it follows that the moments of $H_n$ converge to those of $H$, and the moments of $\tilde H_n$ converge in probability to those of $H$. Combining these facts with the formulas~\eqref{eqn:vartheta2} and~\eqref{eqn:tildevartheta2} yields the limits~\eqref{eqn:twolimits}.

In light of the relations~\eqref{eqn:firstdiff} and~\eqref{eqn:seconddiff}, and the limits~\eqref{eqn:twolimits}, we will expand the function $g_n$ around $\vartheta(\mathbf f)$ and apply the delta method. This is justified because the gradient $\nabla g_n$ has the following continuity property. Namely, if we let $g(x_1,x_2)=\frac{x_1^2}{x_2-cx_1^2}$, let $\mathcal{U}$ be a sufficiently small open neighborhood of $\vartheta(\mathbf f)$ in $\R^2$, and let $\{(x_{1,n},x_{2,n})\}$ be any sequence of points within $\mathcal{U}$ that converges to $\vartheta(\mathbf f)$, then we have the limit
\begin{equation}\label{eqn:gradlimit}
    \nabla g_n(x_{1,n},x_{2,n}) \ \to \ \nabla g(\vartheta(\mathbf f)) \ = \  \left(\ts\frac{2\phi_1}{\phi_2}+\frac{2c\phi_1^3}{\phi_2^2},- \frac{\phi_1^2}{\phi_2^2}\right).
\end{equation}
So, applying the delta method to the relations~\eqref{eqn:firstdiff} and~\eqref{eqn:seconddiff} and the weak limits~\eqref{eqn:CLT2first} and~\eqref{eqn:CLT2second} gives
\begin{align}
    \mathcal{L}(\breve r_n -r_n) &  \ \Rightarrow  \  \mathcal{L}(\nabla g(\vartheta(\mathbf f))\ttop \mathbf{z})\\[0.2cm]
       \mathcal{L}(\breve r_n^*-\tilde r_n\big| X\big) & \ \xRightarrow{\P} \  \mathcal{L}(\nabla g(\vartheta(\mathbf f))\ttop \mathbf{z}).
\end{align}

Now, it remains to show that the variance of the Gaussian random variable $\nabla g(\vartheta(\mathbf f))\ttop \mathbf{z}$ is positive. Letting $K$ denote the $2\times 2$ covariance matrix of $\mathbf z$, it is shown below equation 2.10 in~\citep{hu2019aos} that the entries of $K$ are given by
\begin{align*}
    K_{11} &= 2c\phi_2+c(\tau-2)\phi_1^2,\\
   K_{12} & = 4c\phi_3 + 4 c^2\phi_1\phi_2 +2c(\tau-2)\phi_1(c\phi_1^2+\phi_2)\\
    K_{22} &= 8c\phi_4 +4c^2\phi_2^2 + 16 c^2\phi_1\phi_3 +8 c^3\phi_1^2\phi_2+4c(\tau-2)(c\phi_1^2+\phi_2)^2.
\end{align*}
Combining this with the formula for $\nabla g(\vartheta(\mathbf f))$ in~\eqref{eqn:gradlimit},
a direct but lengthy computation of the quadratic form $\nabla g(\vartheta(\mathbf f))\ttop K \,\nabla g(\vartheta(\mathbf f))$ yields
\begin{equation}\label{eqn:varformula}
\begin{split}
    \var\!\Big(\nabla g(\vartheta(\mathbf f))\ttop \mathbf{z}\Big)
    & \ = \ \ts\frac{4c\phi_1^2}{\phi_2^4}\Big(c\phi_1^2\phi_2^2 +2\phi_1^2\phi_4 +2\phi_2^3-4\phi_1\phi_2\phi_3\Big).
    \end{split}
\end{equation}
To see that the variance is positive, it  suffices to check that $2\phi_1^2\phi_4 +2\phi_2^3-4\phi_1\phi_2\phi_3$ is non-negative. Rewriting this as $2(\phi_1\phi_4^{1/2}-\phi_2^{3/2})^2+ 4\phi_1\phi_2(\phi_4^{1/2}\phi_2^{1/2}-  \phi_3)$, its non-negativity follows from the observation that $\phi_3\leq \phi_4^{1/2}\phi_2^{1/2}$ is an instance of the Cauchy-Schwarz inequality. \qed

~\\

\begin{proposition}\label{prop:correction}
    Under Assumptions~\ref{Data generating model} and~\ref{Regularity of spectrum}, the limits~\eqref{eqn:alim1} and~\eqref{eqn:alim2} hold as $n\to\infty$.
\end{proposition}
\proof Here, we retain the definitions of $f_1$ and $f_2$ used in the proof of the previous proposition. The difference $\hat r_n-\breve r_n$ can be written explicitly in terms of $T_n(f_1)$, $T_n(f_2)$, and $\hat\Delta_n$ (defined below~\eqref{eqn:rhatdef}) as
\begin{align}\label{eqn:rhatdiff}
    \hat r_n - \breve r_n = \frac{T_{n}(f_1)^2\Big(\hat \Delta_n - pc_nT_{n}(f_1)^2\Big)}{(T_{n}(f_2)-c_nT_{n}(f_1)^2)(T_{n}(f_2)-\hat \Delta_n /p)}.
\end{align}
Furthermore, $\hat\Delta_n$ can be decomposed as
\begin{equation}
    \hat\Delta_n =  \hat\kappa_{1} T_n(f_1)^2 \ + \ \hat\kappa_{2} T_n(f_2),
\end{equation}
where the random coefficients $\hat\kappa_{1}$ and $\hat\kappa_{2}$ are defined by
\begin{align}
    \hat\kappa_{1} \ &= \ c_n p \bigg[ \frac{n+1}{n} +\frac{n-1}{n} \frac{{\hat\varsigma}_n^2 - 2p}{p(p+2)} - \frac{2(n-1)}{n^2} \frac{p^2+\hat\varsigma_n^2}{p(p+2)} \bigg]\\[0.2cm]
\hat\kappa_{2} \ &= \ c_n\bigg[\frac{2(n-1)}{n}\frac{p^2+\hat\varsigma_n^2}{p(p+2)} - 1\bigg].\label{eqn:kappa2}
\end{align}
The last few displays show that in order to determine the limit of $\hat r_n-\breve r_n$, it suffices to determine the limit of the triple $(T_{n}(f_1),T_{n}(f_2), \hat\varsigma_n^2/p)$. For the random variables $T_n(f_1)$ and $T_n(f_2)$, we can apply the limits~\eqref{eqn:CLT2first} and~\eqref{eqn:twolimits} as well the formula~\eqref{eqn:thetaformula} to obtain
\begin{align}\label{eqn:T12const}
    \mathcal{L}(T_{n}(f_1),T_{n}(f_2)) \Rightarrow \mathcal{L}(\phi_1,\phi_2+c\phi_1^2).
\end{align}
In addition, the proof of the limit~\eqref{eqn:tauconsistency} in Theorem~\ref{thm:estimators} shows that 
\begin{equation}\label{eqn:sig2const}
    \mathcal{L}(\ts\frac{1}{p}\hat\varsigma_n^2) \ \Rightarrow \ \mathcal{L}(\tau).
\end{equation}
Combining the previous two displays with the formulas~\eqref{eqn:rhatdiff}-\eqref{eqn:kappa2}, a direct calculation leads to
\begin{align}\label{eqn:hatrlimtemp}
    \mathcal{L}(\hat r_n - \breve r_n) \ \Rightarrow \ \mathcal{L}(a),
\end{align}
where
\begin{equation*}
    a=\ts\frac{c\phi_1^2(\phi_2 + (\tau-2)\phi_1^2)}{\phi_2^2},
\end{equation*}
which proves~\eqref{eqn:alim1}. 

Now we turn to the proof of~\eqref{eqn:alim2}.
By analogy with the previous argument that led to~\eqref{eqn:hatrlimtemp}, it is enough to show the following two limits hold as $n\to\infty$,
\begin{align}
        \mathcal{L}(T_{n}^*(f_1),T_{n}^*(f_2)|X) &\xRightarrow{\P} \mathcal{L}(\phi_1,\phi_2+c\phi_1^2)\label{eqn:firstrstarlim}\\[0.2cm]
        \mathcal{L}(\ts\frac{1}{p}(\hat\varsigma_n^2)^*|X) &\xRightarrow{\P} \mathcal{L}(\tau),\label{eqn:secondrstarlim}
\end{align}
where $(\hat\varsigma_n^2)^*$ is obtained by applying the formula~\eqref{eqn:varest} to the bootstrap data $\x_1^*,\dots,\x_n^*$. The first limit~\eqref{eqn:firstrstarlim} is a consequence of two limits that were established in the proof of Theorem~\ref{thm:main}, which are that $\textup{II}_n\xrightarrow{\P}0$ and $\tilde\vartheta_n(\mathbf{f})\xrightarrow{\P}(\phi_1,\phi_2+c\phi_1^2)$.

Regarding the second limit~\eqref{eqn:secondrstarlim}, note that it is equivalent to showing that for any subsequence $J\subset\{1,2,\dots\}$, there is a further subsequence $J'\subset J$ such that $\mathcal{L}(\ts\frac{1}{p}(\hat\varsigma_n^2)^*|X)\Rightarrow \mathcal{L}(\tau)$ holds almost surely along $J'$. The latter statement can be proven by analogy with the limit $\mathcal{L}(\frac{1}{p}\hat{\varsigma}_n^2)\Rightarrow \mathcal{L}(\tau)$, which follows from the proof of~\eqref{eqn:tauconsistency} in Theorem~\ref{thm:estimators}. To be more precise, this analogy can be justified as follows: The proof of~\eqref{eqn:tauconsistency} only relies on Assumption~\ref{Data generating model} and two other conditions, which are
\begin{equation}\label{eqn:pairlim}
\lambda_1(\Sigma_n)\lesssim 1, \ \ \  \ \text{ and } \ \ \  \ (\ts\frac{1}{p}\tr(\Sigma_n),\ts\frac 1p \tr(\Sigma_n^2)) \to (\phi_1,\phi_2).
\end{equation}
Consequently, it is enough to check that bootstrap counterparts of these conditions hold almost surely along $J'$. First, the bootstrap counterpart of Assumption~\ref{Data generating model} was shown to hold almost surely along subsequences in the proof of Theorem~\ref{thm:main}. Second, the bootstrap counterpart of~$\lambda_1(\Sigma_n)\lesssim 1$ is implied by Lemma~\ref{lem:largest eigenvalues}, which guarantees that there is a constant $C>0$ such that $\limsup_{n\to\infty}\lambda_1(\tilde\Sigma_n)\leq C$ holds almost surely. Lastly, the bootstrap counterpart of the limit $(\ts\frac{1}{p}\tr(\Sigma_n),\ts\frac 1p \tr(\Sigma_n^2)) \to (\phi_1,\phi_2)$ is handled by the fact that the moments of $\tilde H_n$ converge in probability to the moments of $H$, which was shown in the proof of Proposition~\ref{prop:delta}. This completes the proof.
\qed

\setcounter{lemma}{0}
\renewcommand{\thelemma}{D.\arabic{lemma}}

\section{Background results}\label{app:background}
\begin{lemma}[\cite{hu2019aos}, Lemma A.1]\label{lem:quadform}
     Let $\xi_1\in\R$ and $\mathbf{u}_1\in\R^p$ satisfy the conditions in Assumption \ref{Data generating model}, and fix any symmetric matrix $M\in \R^{p\times p}$. Then,
    \begin{align}
        \var(\xi_1^2\mathbf{u}_1\ttop M \mathbf{u}_1) = \frac{\E(\xi_1^4)}{p(p+2)}(\tr(M)^2+2\tr(M^2)) - \tr(M)^2.
    \end{align}
\end{lemma}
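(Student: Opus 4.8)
The plan is to exploit the independence of $\xi_1$ and $\mathbf u_1$ together with the first and second moments of the quadratic form $Q := \mathbf u_1^\top C\,\mathbf u_1$ under the uniform distribution on the sphere. Since $\xi_1^2$ and $Q$ are independent and $\E(\xi_1^2)=p$ by Assumption~\ref{Data generating model}, one has
\begin{equation*}
\var(\xi_1^2 Q) \;=\; \E(\xi_1^4)\,\E(Q^2) \;-\; \E(\xi_1^2)^2\,\E(Q)^2 \;=\; \E(\xi_1^4)\,\E(Q^2) \;-\; p^2\,\E(Q)^2,
\end{equation*}
so the whole problem reduces to computing $\E(Q)$ and $\E(Q^2)$.

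For the first moment, $\E(Q) = \tr\!\big(C\,\E(\mathbf u_1\mathbf u_1^\top)\big) = \tr(C)/p$, using the standard fact $\E(\mathbf u_1\mathbf u_1^\top) = I_p/p$ (immediate from rotational invariance together with $\tr\,\E(\mathbf u_1\mathbf u_1^\top)=1$). This already contributes the term $-\,p^2\,\E(Q)^2 = -\tr(C)^2$. For the second moment, expand $\E(Q^2) = \sum_{i,j,k,l} C_{ij}C_{kl}\,\E(u_{1i}u_{1j}u_{1k}u_{1l})$ and use the fourth-moment tensor of the uniform law on the unit sphere of $\R^p$,
\begin{equation*}
\E(u_{1i}u_{1j}u_{1k}u_{1l}) \;=\; \frac{\delta_{ij}\delta_{kl} + \delta_{ik}\delta_{jl} + \delta_{il}\delta_{jk}}{p(p+2)} .
\end{equation*}
Contracting the three $\delta$-terms against $C_{ij}C_{kl}$ produces $\tr(C)^2$, $\tr(CC^\top)$, and $\tr(C^2)$ respectively, so $\E(Q^2) = \big(\tr(C)^2 + \tr(CC^\top) + \tr(C^2)\big)/(p(p+2))$, which equals $\big(\tr(C)^2 + 2\tr(C^2)\big)/(p(p+2))$ whenever $C$ is symmetric (the case relevant in every use of the lemma in this paper; in general $\mathbf u_1^\top C\,\mathbf u_1$ only sees $(C+C^\top)/2$, so one may symmetrize $C$ at the outset). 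Substituting the two moments into the variance identity and using $\E(\xi_1^2)=p$ once more yields the claimed formula.

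The one nonroutine ingredient — and the main obstacle — is the displayed fourth-moment tensor. I would justify it by a symmetry-plus-normalization argument: by orthogonal invariance of the law of $\mathbf u_1$, the tensor $\E(u_{1i}u_{1j}u_{1k}u_{1l})$ is isotropic, hence equals $a\,(\delta_{ij}\delta_{kl} + \delta_{ik}\delta_{jl} + \delta_{il}\delta_{jk})$ for a single constant $a$; contracting $i=j$ and $k=l$ and using $\|\mathbf u_1\|_2=1$ gives $a\,p(p+2) = \E\|\mathbf u_1\|_2^4 = 1$, so $a = 1/(p(p+2))$. (Equivalently, write $\mathbf u_1 = \mathbf z/\|\mathbf z\|_2$ for a standard Gaussian $\mathbf z$ and reduce to the known fourth moments of $\mathbf z$ together with the independence of $\mathbf z/\|\mathbf z\|_2$ and $\|\mathbf z\|_2$.) The remainder is bookkeeping; the only place an error could creep in is the three index contractions, which I would verify one at a time.
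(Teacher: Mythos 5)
The paper itself gives no proof of this lemma; it is stated as a background result in Appendix~D and attributed directly to Lemma A.1 of~\citep{hu2019aos}. So there is no paper argument to compare against, and your proof stands on its own. It is correct: the reduction via independence of $\xi_1^2$ and $Q=\mathbf{u}_1\ttop C\mathbf{u}_1$ to $\var(\xi_1^2 Q)=\E(\xi_1^4)\E(Q^2)-p^2\E(Q)^2$ is exactly the right first move, $\E(Q)=\tr(C)/p$ follows from $\E(\mathbf{u}_1\mathbf{u}_1\ttop)=I_p/p$, and the isotropy-plus-normalization derivation of
$\E(u_{1i}u_{1j}u_{1k}u_{1l})=\big(\delta_{ij}\delta_{kl}+\delta_{ik}\delta_{jl}+\delta_{il}\delta_{jk}\big)/\big(p(p+2)\big)$
is standard, with the normalization $a\,p(p+2)=\E\|\mathbf{u}_1\|_2^4=1$ correct. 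Your three index contractions yield $\tr(C)^2$, $\tr(CC\ttop)$, and $\tr(C^2)$ as you state, so $\E(Q^2)=\big(\tr(C)^2+\tr(CC\ttop)+\tr(C^2)\big)/\big(p(p+2)\big)$. You are also right to flag that the displayed identity with $2\tr(C^2)$ is literally correct only when $C$ is symmetric (for general $C$ the numerator should read $\tr(C)^2+\tr(CC\ttop)+\tr(C^2)$, which reduces to the stated form after replacing $C$ by $(C+C\ttop)/2$, as the quadratic form only sees this symmetrization); every invocation of the lemma in the paper takes $C$ symmetric, so this is a harmless imprecision in the lemma's phrasing rather than a gap in your argument.
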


\begin{lemma}\label{lem:mixedmoments}
Let $\mathbf{x}_{1}, \ldots, \mathbf{x}_{n}$ satisfy the conditions in Assumption \ref{Data generating model}, and let $i,j,k,l$ be four distinct indices in $\{1,\dots,n\}$. Then, 
 \begin{align}
    \E((\mathbf{x}_i\ttop \mathbf{x}_j)^4) \ &= \ 3\left(\frac{\E(\xi_1^4)}{p(p+2)}\right)^2 (\tr(\Sigma_n^2)^2 + 2 \tr(\Sigma_n^4))\label{eqn:quadmix2}\\[0.2cm]
    \E((\mathbf{x}_i\ttop \mathbf{x}_j)^2(\mathbf{x}_i\ttop \mathbf{x}_k)^2) \ &= \ \frac{\E(\xi_1^4)}{p(p+2)} (\tr(\Sigma_n^2)^2 + 2 \tr(\Sigma_n^4))\label{eqn:triplemix2}\\[0.2cm]
    \E((\mathbf{x}_i\ttop \mathbf{x}_j)^2(\mathbf{x}_l\ttop \mathbf{x}_k)^2) \ &= \ \tr(\Sigma^2_n)^2.\label{eqn:doublemix2}
\end{align}

\end{lemma}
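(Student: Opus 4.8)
The plan is to use the elliptical representation $\mathbf{x}_i=\xi_i\Sigma_n^{1/2}\mathbf{u}_i$ to factor each quadratic form as $\mathbf{x}_i\ttop\mathbf{x}_j=\xi_i\xi_j\,\mathbf{u}_i\ttop\Sigma_n\mathbf{u}_j$, and then to evaluate the moments by iterated conditioning: first pull out the radial variables $\xi$, and then integrate out the spherical vectors $\mathbf{u}$ one at a time. The radial part is governed by the estimate $\E(\xi_1^4)=p^2+\tau p+o(p)\asymp p^2$, which follows from Assumption~\ref{Data generating model}. The spherical part is handled by two identities for a vector $\mathbf{u}$ uniform on the unit sphere of $\R^p$: the first-moment identity $\E(\mathbf{u}\mathbf{u}\ttop)=\tfrac1pI_p$, and the second-moment identity $\E((\mathbf{u}\ttop C\mathbf{u})^2)=\tfrac{\tr(C)^2+2\tr(C^2)}{p(p+2)}$ valid for any symmetric $C$, which is obtained by specializing Lemma~\ref{lem:quadform} to the degenerate case $\xi_1\equiv\sqrt p$ and using $\E(\mathbf{u}\ttop C\mathbf{u})=\tfrac1p\tr(C)$. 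Taking $C=\mathbf{w}\mathbf{w}\ttop$ in the latter gives the fourth-moment formula $\E((\mathbf{u}\ttop\mathbf{w})^4)=\tfrac{3}{p(p+2)}\|\mathbf{w}\|_2^4$ for any fixed $\mathbf{w}$.

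For~\eqref{eqn:doublemix2}, when $i,j,k,l$ are distinct the four observations are mutually independent, so the expectation factors as $\E((\mathbf{x}_i\ttop\mathbf{x}_j)^2)\E((\mathbf{x}_l\ttop\mathbf{x}_k)^2)$. Here $\E((\mathbf{x}_i\ttop\mathbf{x}_j)^2)=\E(\xi_1^2)^2\,\E((\mathbf{u}_i\ttop\Sigma_n\mathbf{u}_j)^2)$, and conditioning on $\mathbf{u}_j$ and applying $\E(\mathbf{u}\mathbf{u}\ttop)=\tfrac1pI_p$ twice gives $\E((\mathbf{u}_i\ttop\Sigma_n\mathbf{u}_j)^2)=\tfrac{1}{p^2}\tr(\Sigma_n^2)$, whence $\E((\mathbf{x}_i\ttop\mathbf{x}_j)^2)=\tr(\Sigma_n^2)$ and the product equals $\tr(\Sigma_n^2)^2$ exactly.

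For~\eqref{eqn:quadmix2}, independence of $\xi_i,\xi_j,\mathbf{u}_i,\mathbf{u}_j$ gives $\E((\mathbf{x}_i\ttop\mathbf{x}_j)^4)=\E(\xi_1^4)^2\,\E((\mathbf{u}_i\ttop\Sigma_n\mathbf{u}_j)^4)$. Conditioning on $\mathbf{u}_j$ and using the fourth-moment formula with $\mathbf{w}=\Sigma_n\mathbf{u}_j$ yields $\E((\mathbf{u}_i\ttop\Sigma_n\mathbf{u}_j)^4)=\tfrac{3}{p(p+2)}\E((\mathbf{u}_j\ttop\Sigma_n^2\mathbf{u}_j)^2)$, and the second-moment identity with $C=\Sigma_n^2$ turns this into $\tfrac{3(\tr(\Sigma_n^2)^2+2\tr(\Sigma_n^4))}{p^2(p+2)^2}$. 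Combined with $\E(\xi_1^4)^2\asymp p^4$, this shows $\E((\mathbf{x}_i\ttop\mathbf{x}_j)^4)\asymp\tr(\Sigma_n^2)^2+2\tr(\Sigma_n^4)$. Under Assumption~\ref{Regularity of spectrum} the eigenvalues of $\Sigma_n$ lie in a fixed compact subinterval of $(0,\infty)$, so $\tr(\Sigma_n^k)\asymp p$ for every fixed $k\geq1$; hence $\tr(\Sigma_n^2)^2+2\tr(\Sigma_n^4)\asymp\tr(\Sigma_n^2)^2$, which gives~\eqref{eqn:quadmix2}. For~\eqref{eqn:triplemix2}, writing $(\mathbf{x}_i\ttop\mathbf{x}_j)^2(\mathbf{x}_i\ttop\mathbf{x}_k)^2=\xi_i^4\xi_j^2\xi_k^2(\mathbf{u}_i\ttop\Sigma_n\mathbf{u}_j)^2(\mathbf{u}_i\ttop\Sigma_n\mathbf{u}_k)^2$ and conditioning on $\mathbf{u}_i$, the conditional independence of $\mathbf{u}_j$ and $\mathbf{u}_k$ given $\mathbf{u}_i$ makes the conditional expectation of each inner factor equal to $\tfrac1p\mathbf{u}_i\ttop\Sigma_n^2\mathbf{u}_i$, so the spherical expectation is $\tfrac{1}{p^2}\E((\mathbf{u}_i\ttop\Sigma_n^2\mathbf{u}_i)^2)=\tfrac{\tr(\Sigma_n^2)^2+2\tr(\Sigma_n^4)}{p^3(p+2)}$; multiplying by $\E(\xi_1^4)\E(\xi_1^2)^2\asymp p^4$ and invoking Assumption~\ref{Regularity of spectrum} as before yields~\eqref{eqn:triplemix2}.

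Most of this is routine bookkeeping once the conditioning is organized correctly, and I do not expect a substantive obstacle. The one point needing care is the two-sided nature of the $\asymp$ relations: one must verify that $\tr(\Sigma_n^2)^2$ genuinely dominates, up to constants, the correction $2\tr(\Sigma_n^4)$, and that the $o(p)$ error in $\E(\xi_1^4)$ is negligible relative to $p^2$; both are guaranteed by Assumption~\ref{Regularity of spectrum}, which keeps the spectrum of $\Sigma_n$ bounded away from $0$ and $\infty$.
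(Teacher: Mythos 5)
Your proof is correct and takes essentially the same route as the paper: iterated conditioning together with the quadratic-form moment formula of Lemma~\ref{lem:quadform}, with the final $\asymp$ relations secured by $\E(\xi_1^4)\asymp p^2$ and $\tr(\Sigma_n^k)\asymp p$ under Assumption~\ref{Regularity of spectrum}. The only cosmetic differences are that you separate the radial and spherical factors from the start and condition on the $\mathbf{u}$'s directly (the paper conditions on the full vector $\mathbf{x}_j$, keeping $\xi_j$ and $\mathbf{u}_j$ together), and for~\eqref{eqn:doublemix2} the paper evaluates $\var(\mathbf{x}_i\ttop\mathbf{x}_j)$ by expanding entrywise rather than by iterated conditioning; both deliver the exact value $\tr(\Sigma_n^2)^2$.
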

\begin{proof}
    Since the observations are i.i.d.~and centered, we have
    \begin{equation}\label{eqn:directcalc}
        \begin{split}
               \E((\mathbf{x}_i\ttop \mathbf{x}_j)^2(\mathbf{x}_l\ttop \mathbf{x}_k)^2) &= (\var(\mathbf{x}_i\ttop \mathbf{x}_j))^2\\[0.2cm]
               &= (\cov(\mathbf{x}_i\ttop \mathbf{x}_j\, ,\,\mathbf{x}_i\ttop \mathbf{x}_j))^2\\
               &=\Big(\sum_{r=1}^p\sum_{s=1}^p \E( \x_{ir}\x_{jr}\x_{is}\x_{js})\Big)^2\\[0.2cm]
               &= \Big(\sum_{r=1}^p\sum_{s=1}^p((\Sigma_n)_{rs})^2\Big)^2,
        \end{split}
    \end{equation}
    which establishes~\eqref{eqn:doublemix2}. The two other assertions can be shown using conditional expectation. For the statement~\eqref{eqn:quadmix2}, we use Lemma~\ref{lem:quadform} to obtain
    \begin{align*}
    \small
        \E((\mathbf{x}_i\ttop \mathbf{x}_j)^4) & \ = \  \E(\E((\mathbf{x}_i\ttop \mathbf{x}_j\mathbf{x}_j\ttop\mathbf{x}_i)^2|\mathbf{x}_j))\\[0.2cm]
        & \ = \ \frac{\E(\xi_1^4)}{p(p+2)} \E\bigg(\ts\Big(\tr(\Sigma_n^{1/2}\mathbf{x}_j\mathbf{x}_j\ttop\Sigma_n^{1/2})\Big)^2+2\tr\Big(\big(\Sigma_n^{1/2}\mathbf{x}_j\mathbf{x}_j\ttop\Sigma_n^{1/2}\big)^2\Big)\bigg) \\[0.2cm]
        & \ = \ 3\ts\frac{\E(\xi_1^4)}{p(p+2)} \E((\mathbf{x}_j\ttop \Sigma_n\mathbf{x}_j)^2)\\[0.2cm]
        &\ = \ 3\left(\frac{\E(\xi_1^4)}{p(p+2)}\right)^2 (\tr(\Sigma_n^2)^2 + 2 \tr(\Sigma_n^4)). 
    \end{align*}
   The argument for~\eqref{eqn:triplemix2} is similar.
\end{proof}

\begin{lemma}\label{lem:4thmomentbound}
Let $\u_1\in\R^p$ be a random vector that is uniformly distributed on the unit sphere, and let $M\in\R^{p\times p}$ be a non-random positive semidefinite matrix with $\lambda_1(M) \lesssim 1$. Then, 
\begin{equation}\label{eqn:4thmomentbound}
    \E\Big| \u_1\ttop M \u_1 -\ts\frac 1p \tr(M)\Big|^4 \lesssim  \ \frac{1}{p^2}.
\end{equation}
\end{lemma}

\proof Due to the orthogonal invariance of $\u_1$, we may work under the assumption that $M$ is diagonal, i.e. $M=\text{diag}(\lambda_1(M),\dots,\lambda_p(M))$. 
Therefore, the quantity $\E\big| \u_1\ttop M \u_1 -\ts\frac 1p \tr(M)\big|^4$ is the same as
\begin{equation}\label{eqn:bigsum}
   \displaystyle\sum_{j_1,j_2,j_3,j_4} 
    \textstyle \lambda_{j_1}(M)\lambda_{j_2}(M)\lambda_{j_3}(M)\lambda_{j_4}(M)
   \,\E\Big(\prod_{l=1}^4 (\u_{1j_l}^2-\ts\frac 1p)\Big).
\end{equation}
Depending on the number of distinct indices among $(j_1,j_2,j_3,j_4)$, the following bounds can be obtained via direct calculation
\begin{equation}
    \bigg|\E\Big(\textstyle\prod_{l=1}^4 (\u_{1j_l}^2-\ts\frac 1p)\Big)\bigg|  \ \lesssim \ \begin{cases} \ts\frac{1}{p^6} \ \ \ \ \text{(4 distinct indices)}\\[0.2cm]
    \ts\frac{1}{p^5} \ \ \ \ \text{(3 distinct indices)}\\[0.2cm]
    \ts\frac{1}{p^4} \ \ \ \ \text{(1 or 2 distinct indices).}
    \end{cases}
\end{equation}
Since the eigenvalues of $M$ are non-negative with $\lambda_1(M)\lesssim 1$, and since  the numbers of terms in~\eqref{eqn:bigsum} involving $k$ distinct indices is $\mathcal{O}(p^k)$, the stated result~\eqref{eqn:4thmomentbound} is proved.\qed

\setcounter{lemma}{0}
\renewcommand{\thelemma}{E.\arabic{lemma}}

\section{Discussion of examples in Section~\ref{sec:setup}}\label{app:examples}
This section provides detailed information related to examples of the random variable $\xi_1^2$ stated in Section~\ref{sec:setup}. We give explicit parameterizations, and we check that the distributions satisfy the conditions in Assumption~\ref{Data generating model}. The only three examples we do not individually cover are the Chi-Squared, Poisson, and Negative-Binomial distributions, because they can be decomposed into sums of independent random variables, and consequently, such examples are covered by the following lemma.

     \begin{lemma}\label{lem:sum}
       Suppose $\xi_1^2= \sum_{j=1}^p z_{1j}^2$ for some independent random variables $z_{11},\dots,z_{1p}$ satisfying
 \small
 \begin{equation}
         \ts\frac{1}{p}\sum_{j=1}^p \E(z_{1j}^2)=1, \quad  \ts\frac{1}{p}\sum_{j=1}^p\var(z_{1j}^2) = \tau+o(1) \text{\quad and  \quad } \displaystyle\max_{1\leq j\leq p}\E|z_{1j}|^{8+2\varepsilon}\,\lesssim\, p^{1+\frac{\varepsilon}{4}}
 \end{equation}
 \normalsize
 as $n\to\infty$, for some fixed constants $\tau\geq 0$ and $\varepsilon >0$ not depending on $n$.
Then, $\xi_1^2$ satisfies the conditions in Assumption~\ref{Data generating model}. 
\end{lemma}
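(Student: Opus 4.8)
The plan is to check the three requirements in Assumption~\ref{Data generating model} separately, with the first two falling out immediately from independence and the third being the substantive part. Set $W_j = z_{1j}^2 - \E(z_{1j}^2)$ for $j=1,\dots,p$. The hypothesis $\frac1p\sum_{j=1}^p \E(z_{1j}^2)=1$ gives $\E(\xi_1^2)=\sum_{j=1}^p\E(z_{1j}^2)=p$, and hence the centering identity $\xi_1^2-p=\sum_{j=1}^p W_j$. Since the $z_{1j}$ are independent, so are the $W_j$, and therefore $\var(\xi_1^2)=\sum_{j=1}^p\var(z_{1j}^2)=p(\tau+o(1))$ by the second hypothesis; dividing by $p$ yields $\var\big((\xi_1^2-p)/\sqrt p\big)=\tau+o(1)$, which is the second condition in~\eqref{eqn:moment assumption}.

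It remains to prove the moment bound $\E|(\xi_1^2-p)/\sqrt p|^{4+\varepsilon}\lesssim 1$, i.e.\ $\E\big|\sum_{j=1}^p W_j\big|^{q}\lesssim p^{q/2}$ with $q=4+\varepsilon$. I would invoke Rosenthal's inequality: since the $W_j$ are independent and mean zero and $q\geq 2$ is fixed, there is a constant $C_q$ not depending on $n$ with
\begin{equation*}
\E\Big|\ts\sum_{j=1}^p W_j\Big|^{q} \ \leq \ C_q\Big(\ts\sum_{j=1}^p \E|W_j|^{q} \ + \ \big(\ts\sum_{j=1}^p \E W_j^2\big)^{q/2}\Big).
\end{equation*}
The second term is controlled exactly by the variance computation from the previous paragraph: $\sum_{j=1}^p\E W_j^2=\sum_{j=1}^p\var(z_{1j}^2)\lesssim p$, so $\big(\sum_{j=1}^p\E W_j^2\big)^{q/2}\lesssim p^{q/2}=p^{2+\varepsilon/2}$, as required.

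For the first term, I would bound $\E|W_j|^q\lesssim \E|z_{1j}|^{2q}+(\E z_{1j}^2)^q$ using $|a-b|^q\lesssim|a|^q+|b|^q$, and then note that $(\E z_{1j}^2)^q\leq \E|z_{1j}|^{2q}$ by Lyapunov's inequality, so that $\E|W_j|^q\lesssim \E|z_{1j}|^{2q}=\E|z_{1j}|^{8+2\varepsilon}\lesssim p^{1+\varepsilon/4}$ uniformly in $j$ by the third hypothesis. Summing the $p$ terms gives $\sum_{j=1}^p\E|W_j|^q\lesssim p^{2+\varepsilon/4}\leq p^{2+\varepsilon/2}$. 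Combining the two Rosenthal terms yields $\E|\xi_1^2-p|^q\lesssim p^{2+\varepsilon/2}$, which is the third condition in~\eqref{eqn:moment assumption}, completing the proof.

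The argument is routine; the only point needing attention is the exponent bookkeeping, namely making sure the ``extreme'' term $\sum_j\E|W_j|^q$ does not overwhelm the Gaussian-type term $\big(\sum_j\E W_j^2\big)^{q/2}$. This is exactly why the hypothesis caps the $(8+2\varepsilon)$-th moment at growth rate $p^{1+\varepsilon/4}$: then $p\cdot p^{1+\varepsilon/4}=p^{2+\varepsilon/4}$ sits just below $p^{2+\varepsilon/2}$. One should also record that $q=4+\varepsilon$ is fixed independently of $n$, so $C_q$ is a genuine constant and all the $\lesssim$ bounds are uniform in $n$.
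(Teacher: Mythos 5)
Your proof is correct and follows essentially the same route as the paper: both center by setting $W_j = z_{1j}^2-\E(z_{1j}^2)$, invoke Rosenthal's inequality for independent mean-zero summands at the exponent $q=4+\varepsilon$, bound the individual $q$-th moments via $\E|W_j|^q\lesssim \E|z_{1j}|^{2q}$, and verify that the hypothesis $\max_j\E|z_{1j}|^{8+2\varepsilon}\lesssim p^{1+\varepsilon/4}$ keeps the sum term $\sum_j\E|W_j|^q$ at or below the Gaussian-type term $(\sum_j\E W_j^2)^{q/2}\asymp p^{q/2}$. The only cosmetic difference is that the paper states Rosenthal in $L^q$-norm form (taking $q$-th roots throughout) while you work with raw $q$-th moments; the exponent bookkeeping is equivalent and your version is correct.
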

\proof
     It is only necessary to show  $\E\big|(\xi_1^2-\E(\xi_1^2))/\sqrt p\big|^{4+\varepsilon}\lesssim 1$.
   Using Rosenthal's inequality~\citep{Rosenthal} to bound the $L^{4+\varepsilon}$ norm of a sum of independent centered random variables, we have
   \footnotesize
     \begin{equation}
        \begin{split}
         \Big(\E\big|\xi_1^2-\E(\xi_1^2)\big|^{4+\varepsilon}\Big)^{\frac{1}{4+\varepsilon}} & \ = \ \Big(\E\big|\sum_{j=1}^p z_{1j}^2-\E(z_{1j}^2)\big|^{4+\varepsilon}\Big)^{\frac{1}{4+\varepsilon}}\\[0.2cm]
         & \ \lesssim \  \sqrt{\ts\sum_{j=1}^p\var(z_{1j}^2)}+p^{\frac{1}{4+\varepsilon}}\max_{1\leq j\leq p} (\E|z_{1j}^2-\E(z_{1j}^2)|^{4+\varepsilon})^{\frac{1}{4+\varepsilon}}\\[0.2cm]
         & \ \lesssim \sqrt{p} + p^{1/4}\max_{1\leq j\leq p} (\E |z_{1j}|^{8+2\varepsilon})^{\frac{1}{4+\varepsilon}}\\[0.2cm]
         & \ \lesssim \  \sqrt{p},
         \end{split}
    \end{equation}
    \normalsize
          which completes the proof.
     \qed

~\\
        \noindent\textbf{Beta distribution} The Beta$(a,b)$ distribution with parameters $a,b>0$ has a density function that is proportional to $x^{a-1}(1-x)^{b-1}$ for $x\in(0,1)$.

        \begin{lemma}
          If $\beta>0$ is fixed with respect to $n$ and $\xi_1^2 \sim (p+2\beta)$\textup{Beta}$(p/2,\beta)$, then $\xi_1^2$ satisfies the conditions in Assumption \ref{Data generating model}.
        \end{lemma}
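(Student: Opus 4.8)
The plan is to check directly the three conditions that Assumption~\ref{Data generating model} imposes on $\xi_1^2$: that $\E(\xi_1^2)=p$; that $\var\big((\xi_1^2-p)/\sqrt p\big)=\tau+o(1)$ for some fixed $\tau\ge 0$; and that $\E\big|(\xi_1^2-p)/\sqrt p\big|^{4+\varepsilon}\lesssim 1$ for some fixed $\varepsilon>0$. Write $Y\sim\textup{Beta}(p/2,\beta)$, so that $\xi_1^2=(p+2\beta)Y$, and recall the standard formulas $\E(Y)=\frac{p/2}{p/2+\beta}$ and $\var(Y)=\frac{(p/2)\beta}{(p/2+\beta)^2(p/2+\beta+1)}$. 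The mean condition is immediate: $(p+2\beta)\E(Y)=(p+2\beta)\cdot\frac{p}{p+2\beta}=p$. For the variance, a short simplification using $p/2+\beta=(p+2\beta)/2$ gives $\var(\xi_1^2)=(p+2\beta)^2\var(Y)=\frac{4p\beta}{p+2\beta+2}$, so that $\var\big((\xi_1^2-p)/\sqrt p\big)=\frac{4\beta}{p+2\beta+2}\to 0$ as $n\to\infty$; hence the second condition holds with $\tau=0$.

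The only nontrivial step is the $(4+\varepsilon)$-moment bound, and the key simplification is to pass to $W:=1-Y\sim\textup{Beta}(\beta,p/2)$. Since $\xi_1^2-p=(p+2\beta)(Y-\E(Y))$ and $(p+2\beta)(1-\E(Y))=2\beta$, we may write $\xi_1^2-p=-(p+2\beta)W+2\beta$. Taking $\varepsilon=1$ for concreteness and using $|a+b|^5\lesssim|a|^5+|b|^5$, it follows that $\E|\xi_1^2-p|^5\lesssim(p+2\beta)^5\,\E(W^5)+1$. The fifth moment of a $\textup{Beta}(\beta,p/2)$ variable is computed exactly from the Beta integral as
\begin{equation*}
\E(W^5)=\frac{B(\beta+5,p/2)}{B(\beta,p/2)}=\frac{\Gamma(\beta+5)}{\Gamma(\beta)}\cdot\frac{\Gamma(\beta+p/2)}{\Gamma(\beta+5+p/2)}=\frac{\beta(\beta+1)(\beta+2)(\beta+3)(\beta+4)}{(\beta+p/2)(\beta+p/2+1)\cdots(\beta+p/2+4)},
\end{equation*}
which is $\asymp p^{-5}$ because $\beta$ is fixed. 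Therefore $(p+2\beta)^5\E(W^5)\asymp 1$, so $\E|\xi_1^2-p|^5\lesssim 1$ and hence $\E\big|(\xi_1^2-p)/\sqrt p\big|^5=p^{-5/2}\,\E|\xi_1^2-p|^5\lesssim 1$, which gives the third condition. The same argument works verbatim for every fixed $\varepsilon>0$, replacing the factorial identity by the asymptotics $\Gamma(x)/\Gamma(x+m)\sim x^{-m}$ as $x\to\infty$.

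I do not anticipate a genuine obstacle here: every step is an explicit Beta/Gamma computation. The only points that require a bit of care are tracking the $\beta$-dependent constants through the simplification of $\var(\xi_1^2)$, and recognizing the reparameterization $W=1-Y$, which both makes the identity $(p+2\beta)(1-\E(Y))=2\beta$ transparent and turns the moment estimate — otherwise somewhat delicate, since $Y$ concentrates near the boundary point $1$ — into the elementary cancellation $(p+2\beta)^5\cdot p^{-5}\asymp 1$.
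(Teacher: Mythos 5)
Your proof is correct and takes a genuinely different route from the paper's. The paper quotes the formula $\E(\xi_1^{2k}) = \prod_{j=0}^{k-1}\big(p + \frac{4j\beta}{p+2\beta+2j}\big)$ from~\cite{Kotz:vol2} and then expands the central sixth moment via the binomial identity~\eqref{eqn:centralmoments}, asserting (without displaying the algebra) that the resulting rational function of $\beta$ tends to $0$ as $p\to\infty$. This works, but the binomial expansion of $\E|\xi_1^2-p|^6$ requires careful cancellations among terms of order $p^6$, which the paper does not spell out. Your reparameterization $W = 1-Y \sim \textup{Beta}(\beta, p/2)$ sidesteps exactly this difficulty: since $Y$ concentrates near the right boundary $1$, passing to $W = 1-Y$ produces a variable concentrating near $0$ with the clean moment estimate $\E(W^k) = \frac{\Gamma(\beta+k)}{\Gamma(\beta)}\cdot\frac{\Gamma(\beta+p/2)}{\Gamma(\beta+k+p/2)} \asymp p^{-k}$, and the linear identity $\xi_1^2-p = 2\beta - (p+2\beta)W$ then gives $\E|\xi_1^2-p|^{4+\varepsilon}\lesssim 1$ with no cancellations to track. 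In short, your approach makes explicit and transparent what the paper's proof only claims can be checked, and your estimate is even a bit sharper, showing $\E|(\xi_1^2-p)/\sqrt p|^{5} = O(p^{-5/2})$ rather than just boundedness.
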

        \proof
       The mean and variance are given by $\E(\xi_1^2) = p$ and $\var(\xi_1^2) = \frac{4\beta p}{p+2\beta+2}$, and in particular we have $\var(\xi_1^2)/p \to 0$ as $p\to\infty$. Based on Equation (25.14) in \cite{Kotz:vol2}, the higher moments of $\xi_1^2$ are
        \begin{align*}
            \E(\xi_1^{2k}) = \prod_{j = 0}^{k-1}\Big(p + \ts\frac{4j\beta}{p+2\beta+2j}\Big).
        \end{align*}
        Using the general relationship between central moments to ordinary moments
        \begin{equation}\label{eqn:centralmoments}
        \E\big|\ts\frac{\xi_1^2-p}{\sqrt p}\big|^6 
        \ = \ \frac{1}{p^3} \displaystyle\sum_{j=0}^6 \ts\binom{6}{j}(-1)^{6-j}\E(\xi_1^2)^{6-j}\E(\xi_1^{2j}),
        \end{equation}
        it can be checked that that $\E|\frac{\xi_1^2-p}{\sqrt p}|^6$ is a rational function of $\beta$ that converges pointwise to 0 as $p\to\infty$. This implies the $(4+\varepsilon)$-moment condition in~\eqref{eqn:moment assumption}.\qed
        ~\\
        
    \noindent\textbf{Beta-Prime distribution.} A random variable $W$ is said to follow a Beta-Prime$(a,b)$ distribution with parameters $a,b>0$ if it can be expressed as $W=\frac{U}{1-U}$ with $U\sim$\,Beta$(a,b)$.

         \begin{lemma}
            If $\tau>0$ and $\xi_1^2 \sim$ \textup{Beta-Prime}$\left(\frac{p(1+p+\tau)}{\tau},\frac{1+p+2\tau}{\tau}\right)$,  then $\xi_1^2$ satisfies the conditions in Assumption \ref{Data generating model}.
         \end{lemma}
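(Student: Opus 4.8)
Write $a=\frac{p(1+p+\tau)}{\tau}$ and $b=\frac{1+p+2\tau}{\tau}$, so that $b-1=\frac{1+p+\tau}{\tau}$ and $b-2=\frac{1+p}{\tau}$. The plan is to verify the three requirements of Assumption~\ref{Data generating model} in turn, following the moment-based template already used for the $(p+2\beta)\textup{Beta}(p/2,\beta)$ example. The starting point is the representation: if $W\sim\textup{Beta-Prime}(a,b)$ then $W\overset{d}{=}U/V$ for independent $U\sim\textup{Gamma}(a,1)$ and $V\sim\textup{Gamma}(b,1)$, and hence $\E(W^k)=\frac{\Gamma(a+k)}{\Gamma(a)}\cdot\frac{\Gamma(b-k)}{\Gamma(b)}=\frac{\prod_{j=0}^{k-1}(a+j)}{\prod_{j=1}^{k}(b-j)}$ whenever $b>k$; since $\tau$ is fixed, this holds for each fixed $k$ once $n$ is large.

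First, substituting the parameters gives $\E(\xi_1^2)=\frac{a}{b-1}=p$ exactly. For the variance, using $\E(\xi_1^4)=\frac{a(a+1)}{(b-1)(b-2)}$ one finds, after a short simplification, $\var(\xi_1^2)=\frac{a(a+b-1)}{(b-2)(b-1)^2}=p\tau$ exactly, so that $\var\!\big(\tfrac{\xi_1^2-p}{\sqrt p}\big)=\tau$ for every $n$ --- in particular the second condition in~\eqref{eqn:moment assumption} holds with no $o(1)$ correction.

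Second, for the $(4+\varepsilon)$-moment condition I would take $\varepsilon=2$ and bound $\E\big|\tfrac{\xi_1^2-p}{\sqrt p}\big|^6=p^{-3}\,\E(\xi_1^2-p)^6$, which is finite for large $n$ once $b>12$. The cleanest route is the Gamma-ratio representation: since $p=a/(b-1)$ we may write $\xi_1^2-p=\frac{(b-1)U-aV}{(b-1)V}$, and Cauchy--Schwarz gives
\begin{equation*}
\E|\xi_1^2-p|^6\ \leq\ \frac{1}{(b-1)^6}\Big(\E\big[((b-1)U-aV)^{12}\big]\Big)^{1/2}\Big(\E[V^{-12}]\Big)^{1/2}.
\end{equation*}
Here $\E[V^{-12}]=\frac{\Gamma(b-12)}{\Gamma(b)}\asymp b^{-12}\asymp p^{-12}$, while $(b-1)U-aV$ is a linear combination of independent Gammas whose $m$th cumulant is $(m-1)!\,[(b-1)^m a+(-a)^m b]$; in particular its mean is $-a\asymp-p^2$ and its variance is $(b-1)^2a+a^2b\asymp p^5$, so grouping the cumulant expansion of the twelfth moment yields $\E\big[((b-1)U-aV)^{12}\big]\lesssim a^{12}+\big((b-1)^2a+a^2b\big)^6\lesssim p^{30}$. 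Plugging in and using $(b-1)^6\asymp p^6$ gives $\E|\xi_1^2-p|^6\lesssim p^9/(b-1)^6\asymp p^3$, i.e.\ $\E\big|\tfrac{\xi_1^2-p}{\sqrt p}\big|^6\lesssim 1$, as required. (Alternatively, one may expand $\E(\xi_1^2-p)^6=\sum_{j=0}^6\binom{6}{j}(-p)^{6-j}\E(\xi_1^{2j})$ exactly as in the Beta example; this is an explicit rational function of $p$ and $\tau$, and the claim reduces to checking that the $p^6,\dots,p^4$ contributions cancel so that the remainder is $O(p^3)$.)

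The algebra for the mean and variance is short and exact, so the only real bookkeeping is the sixth-moment estimate, and I expect the main obstacle to be making the cancellation of the top-order terms transparent if one uses the brute-force rational-function expansion. The Gamma-ratio plus Cauchy--Schwarz argument avoids this entirely by never expanding a central moment, so I would present that as the primary approach and, if desired, record the direct computation as a remark.
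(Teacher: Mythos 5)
Your proposal is correct, and the sixth-moment step takes a genuinely different route from the paper's. The paper obtains the closed-form moment formula $\E(\xi_1^{2k})=\prod_{j=1}^k\frac{p(1+p+\tau)+(j-1)\tau}{1+p+(2-j)\tau}$ (equivalently your Gamma-ratio identity with $a,b$ substituted), reads off $\E(\xi_1^2)=p$ and $\E(\xi_1^4)=p(p+\tau)$, and then handles the $(4+\varepsilon)$-moment condition by the \emph{direct} binomial expansion~\eqref{eqn:centralmoments} of $\E|\tfrac{\xi_1^2-p}{\sqrt p}|^6$, checking that the top-order terms in $p$ cancel so that the remainder converges to a polynomial in $\tau$ --- i.e.\ exactly the "alternative" you describe in your closing parenthetical. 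Your primary argument instead writes $\xi_1^2-p=\frac{(b-1)U-aV}{(b-1)V}$ for independent $U\sim\mathrm{Gamma}(a,1)$, $V\sim\mathrm{Gamma}(b,1)$, applies Cauchy--Schwarz to split off $\E[V^{-12}]\asymp p^{-12}$, and bounds $\E[((b-1)U-aV)^{12}]\lesssim p^{30}$ via the cumulant orders of a linear combination of independent Gammas. This buys you exactly what you say: there is no delicate cancellation to verify, since you never expand a high central moment of $\xi_1^2$ itself; the trade-off is that you lean on the cumulant-partition bound, which the paper's elementary route avoids.

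One step you should make fully explicit: the claim $\E[((b-1)U-aV)^{12}]\lesssim a^{12}+\bigl((b-1)^2a+a^2b\bigr)^6$ is \emph{not} a generic ``$\E Y^{12}\lesssim\mu^{12}+\sigma^{12}$'' bound --- it only holds here because the higher cumulants are under control. Concretely, for $Y=(b-1)U-aV$ one has $\kappa_1(Y)=-a\asymp p^2$ and $\kappa_m(Y)=(m-1)!\,[(b-1)^m a+(-a)^m b]\asymp p^{2m+1}$ for $m\geq 2$, so a partition of $\{1,\dots,12\}$ with $k_1$ singletons and $r$ blocks contributes order $p^{24+r-k_1}$, which is maximized at $p^{30}$ by the all-pairs partition; only after this check does ``grouping the cumulant expansion'' yield the stated bound. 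With that sentence added, the argument is airtight, and $\E|\tfrac{\xi_1^2-p}{\sqrt p}|^6\lesssim p^{-3}\cdot(b-1)^{-6}\cdot p^{15}\cdot p^{-6}\asymp 1$ follows as you wrote.
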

         \proof
  For any positive integer $k<(1+p+2\tau)/\tau$, the random variable $\xi_1^2$ satisfies the following moment formula~\citep[][\textsection 5]{siegrist2017probability},
   \begin{align}
       \E(\xi_1^{2k}) = \prod_{j=1}^k \frac{p(1+p+\tau)+(j-1)\tau}{1+p+(2-j)\tau}.
   \end{align}
Since $2<(1+p+2\tau)/\tau$, this gives $\E(\xi_1^2)  = p$ and $\E(\xi_1^4) = p(p + \tau)$, which yield $\var\Big(\frac{\xi_1^2-p}{\sqrt p}\Big)=\tau$. Also, using the formula~\eqref{eqn:centralmoments}, it can be checked that $\E|\frac{\xi_1^2-p}{\sqrt p}|^{6}$, viewed as a function of $\tau$, converges pointwise to a polynomial function of $\tau$ as $p\to\infty$. This implies the $(4+\varepsilon)$-moment condition in~\eqref{eqn:moment assumption}.\qed
        ~\\
        ~\\
        \noindent\textbf{Gamma distribution.} For $\alpha, \beta >0$, we parameterize the Gamma$(\alpha, \beta)$ distribution so that its density function is proportional to $x^{\alpha-1}e^{-\beta x}$ when $x\in(0,\infty)$.
      
      \begin{lemma}\label{lem:gamma}
        If $\tau >0$ and $\xi_1^2 \sim \textup{Gamma}(p/\tau, 1/\tau)$, then $\xi_1^2$ satisfies the conditions in Assumption \ref{Data generating model}.
        \end{lemma}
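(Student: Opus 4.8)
The plan is to check, directly from the standard moment and cumulant formulas for the Gamma law, the three requirements that Assumption~\ref{Data generating model} imposes on $\xi_1$: the normalization $\E(\xi_1^2)=p$, the limit for $\var((\xi_1^2-p)/\sqrt p)$, and the $(4+\varepsilon)$-moment bound. First I would recall that with the density $\propto x^{\alpha-1}e^{-\beta x}$, the Gamma$(\alpha,\beta)$ distribution has mean $\alpha/\beta$ and variance $\alpha/\beta^2$. Taking $\alpha=p/\tau$ and $\beta=1/\tau$ gives $\E(\xi_1^2)=\alpha/\beta=p$, exactly the required normalization, and $\var(\xi_1^2)=\alpha/\beta^2=p\tau$, so that $\var\big((\xi_1^2-p)/\sqrt p\big)=\tau$ holds identically in $n$ (in particular it is $\tau+o(1)$).

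It then remains to verify $\E\big|(\xi_1^2-p)/\sqrt p\big|^{4+\varepsilon}\lesssim 1$, and for this I would take $\varepsilon=2$ and bound the sixth central moment $\mu_6=\E|\xi_1^2-p|^6$. The cleanest route is through cumulants: the cumulant generating function of Gamma$(\alpha,\beta)$ is $-\alpha\log(1-t/\beta)$, so its $j$th cumulant equals $\alpha(j-1)!\,\beta^{-j}=p\,\tau^{j-1}(j-1)!$ under our parameterization; in particular every cumulant of $\xi_1^2$ is linear in $p$, with $\kappa_2=p\tau$, $\kappa_3=2p\tau^2$, $\kappa_4=6p\tau^3$, $\kappa_6=120p\tau^5$. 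Substituting into the classical identity $\mu_6=\kappa_6+15\kappa_4\kappa_2+10\kappa_3^2+15\kappa_2^3$ yields $\E|\xi_1^2-p|^6=15\tau^3p^3+130\tau^4p^2+120\tau^5p$, and hence $\E\big|(\xi_1^2-p)/\sqrt p\big|^6=15\tau^3+130\tau^4/p+120\tau^5/p^2\leq C(\tau)$ for all $p\geq 1$. This establishes the $(4+\varepsilon)$-moment condition and finishes the verification. Alternatively, one could substitute the explicit factorial-moment formula $\E(\xi_1^{2k})=\tau^k\prod_{j=0}^{k-1}(p/\tau+j)$ into the central-moment expansion~\eqref{eqn:centralmoments}, exactly as in the Beta and Beta-Prime cases; the same computation carried out with a general variance in place of $p\tau$ also recovers the conditional sixth-moment bound for $(\xi_1^*)^2$ that is invoked in the proof of Theorem~\ref{thm:main}.

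There is no substantive obstacle here; the only point that warrants a moment's care is that, although $\E(\xi_1^{2k})\asymp p^k$ for each fixed $k$, enough cancellation takes place in~\eqref{eqn:centralmoments} that the centered sixth moment is of order $p^3$ rather than a higher power of $p$, which is precisely what makes $(\xi_1^2-p)/\sqrt p$ have bounded sixth moment. This cancellation is transparent from the fact that all cumulants of $\xi_1^2$ scale linearly in $p$ (equivalently, for integer $p/\tau$ the variable $\xi_1^2$ is a sum of $p/\tau$ i.i.d.\ Gamma$(1,1/\tau)$ summands, which already indicates the $\asymp p^3$ scaling of $\mu_6$), so in practice the proof is a short direct calculation.
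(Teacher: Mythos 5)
Your proof is correct, and it takes a genuinely different route from the paper's. The paper verifies the mean and variance conditions exactly as you do, but then disposes of the $(4+\varepsilon)$-moment condition in one line by citing a moment/concentration bound for Gamma random variables (Theorem~2.3 of Boucheron, Lugosi, and Massart), which yields $\big(\E|\xi_1^2-p|^6\big)^{1/6}\leq C(\sqrt{\tau p}+\tau)$ and hence $\E\big|(\xi_1^2-p)/\sqrt p\big|^6\lesssim 1$. You instead compute the sixth central moment exactly from the cumulant generating function $-\alpha\log(1-t/\beta)$, using that $\kappa_j=p\,\tau^{j-1}(j-1)!$ is linear in $p$ and plugging into the partition identity $\mu_6=\kappa_6+15\kappa_4\kappa_2+10\kappa_3^2+15\kappa_2^3$, obtaining $\mu_6=15\tau^3p^3+130\tau^4p^2+120\tau^5p$. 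Both arguments are valid; the paper's is shorter because it outsources the cancellation to a cited theorem, while yours is self-contained, elementary, and gives the exact leading constant $15\tau^3$, making the $O(p^3)$ scaling transparent rather than a black-box consequence. Your side remark that rerunning the computation with variance $\hat\varsigma_n^2$ in place of $p\tau$ recovers the conditional bound~\eqref{eqn:conditionalmomentbound} used in the proof of Theorem~\ref{thm:main} is also accurate (a short Young/AM-GM step absorbs the cross terms $v^4/p^5$ and $v^5/p^7$ into $v^3/p^3$ and $v^6/p^9$), and is a nice observation since the paper itself invokes Lemma~\ref{lem:gamma} at that point.
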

        \proof   The conditions $\E(\xi_1^2)=p$ and $\var\Big(\frac{\xi_1^2-p}{\sqrt p}\Big)=\tau$ follow directly from the stated parameterization. Also, Theorem 2.3 in \cite{boucheron2013concentration} gives 
        \begin{align}\label{eqn:Gammabound}
            \big(\E|\xi_1^2 - p|^6\big)^{1/6} \leq C(\sqrt{\tau p} +\tau)
        \end{align}
        for an absolute constant $C>0$, which implies the $(4+\varepsilon)$-moment condition in~\eqref{eqn:moment assumption}.\qed
         ~\\
         ~\\
\noindent\textbf{Log-Normal distribution.} A positive random variable $Y$ is said to follow a Log-Normal$(\mu,\sigma^2)$ distribution if $\log(Y)\sim N(\mu,\sigma^2)$.
\begin{lemma}
If $\tau > 0$ and $\xi_1^2 \sim$ \textup{Log-Normal}$\left(\!\log(p)-\frac{1}{2}\log\big(1+\frac{\tau}{p}\big), \log\big(1+\frac{\tau}{p}\big)\!\right)$,  then $\xi_1^2$ satisfies the conditions in Assumption \ref{Data generating model}. \  
\end{lemma}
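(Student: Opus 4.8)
The plan is to follow the same template used for the Beta and Beta-Prime lemmas, relying on the closed-form moment formula for the log-normal law. Write $\mu_p=\log(p)-\frac12\log\big(1+\frac{\tau}{p}\big)$ and $\sigma_p^2=\log\big(1+\frac{\tau}{p}\big)$, so that $\xi_1^2\sim\textup{Log-Normal}(\mu_p,\sigma_p^2)$. The standard formula $\E(\xi_1^{2k})=\exp\big(k\mu_p+\frac12 k^2\sigma_p^2\big)$ simplifies, after substituting $\mu_p$ and $\sigma_p^2$ and using $e^{\sigma_p^2}=1+\frac{\tau}{p}$, to
\begin{equation*}
    \E(\xi_1^{2k}) \ = \ p^k\Big(1+\tfrac{\tau}{p}\Big)^{k(k-1)/2} \qquad \text{for every integer } k\geq 0.
\end{equation*}
In particular $\E(\xi_1^2)=p$ and $\E(\xi_1^4)=p^2\big(1+\tfrac{\tau}{p}\big)=p^2+\tau p$, so $\var(\xi_1^2)=\tau p$ and hence $\var\big(\tfrac{\xi_1^2-p}{\sqrt p}\big)=\tau$ exactly. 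This disposes of the mean condition and the variance condition in~\eqref{eqn:moment assumption}.

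It then remains to check the integrability condition, and I would do so with $\varepsilon=2$, i.e.\ show $\E\big|\tfrac{\xi_1^2-p}{\sqrt p}\big|^{6}\lesssim 1$. Plugging the moment formula into the central-moments identity~\eqref{eqn:centralmoments} and using $\E(\xi_1^2)^{6-j}\E(\xi_1^{2j})=p^{6-j}\cdot p^{j}\big(1+\tfrac{\tau}{p}\big)^{j(j-1)/2}=p^{6}\big(1+\tfrac{\tau}{p}\big)^{j(j-1)/2}$ gives
\begin{equation*}
    \E\Big|\tfrac{\xi_1^2-p}{\sqrt p}\Big|^{6} \ = \ p^{3}\sum_{j=0}^{6}\binom{6}{j}(-1)^{6-j}\Big(1+\tfrac{\tau}{p}\Big)^{j(j-1)/2} \ = \ p^{3}\,S\!\Big(1+\tfrac{\tau}{p}\Big),
\end{equation*}
where $S(x)=\sum_{j=0}^{6}\binom{6}{j}(-1)^{6-j}x^{j(j-1)/2}$ is a fixed polynomial of degree $15$. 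The crucial point is that $S$ is divisible by $(x-1)^{3}$: a brief computation shows $S(1)=S'(1)=S''(1)=0$ while $S'''(1)\neq 0$, so $S(x)=(x-1)^{3}R(x)$ for a polynomial $R$ with $R(1)\neq 0$. Consequently $S\big(1+\tfrac{\tau}{p}\big)=\tfrac{\tau^{3}}{p^{3}}R\big(1+\tfrac{\tau}{p}\big)=O(p^{-3})$, the $p^{3}$ prefactor cancels, and $\E\big|\tfrac{\xi_1^2-p}{\sqrt p}\big|^{6}$ converges to a finite constant as $p\to\infty$; in particular it is bounded, which yields the $(4+\varepsilon)$-moment condition in~\eqref{eqn:moment assumption} with $\varepsilon=2$ and completes the verification of Assumption~\ref{Data generating model}.

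The only mildly delicate step is the divisibility of $S$ by $(x-1)^{3}$, but I do not expect a genuine obstacle here. It can be confirmed directly by the three derivative evaluations at $x=1$, or more conceptually by writing $\tfrac{\xi_1^2-p}{\sqrt p}=\sqrt p\,(e^{\sigma_p Z-\sigma_p^2/2}-1)$ with $Z\sim N(0,1)$ and noting that $\sqrt p\,\sigma_p\leq\sqrt\tau$ (since $\log(1+\tfrac{\tau}{p})\leq\tfrac{\tau}{p}$), so that a first-order Taylor expansion of $x\mapsto e^{x}$, together with a crude tail bound on $Z$ to handle the event $\{|\sigma_p Z|>1\}$, controls the whole sixth moment uniformly in $p$. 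The argument is entirely parallel to the Beta-Prime case already recorded above.
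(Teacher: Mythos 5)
Your proof is correct and follows essentially the same route as the paper: both use the closed-form log-normal moment formula, observe that $\E(\xi_1^2)=p$ and $\var\big((\xi_1^2-p)/\sqrt p\big)=\tau$ exactly, and then bound the normalized sixth central moment by reducing it to a fixed polynomial evaluated at $1+\tau/p$. Your explicit identification of the triple zero of $S$ at $x=1$ (giving $p^3S(1+\tau/p)=\tau^3R(1+\tau/p)=O(1)$) is a slightly cleaner packaging of what the paper summarizes as ``a direct calculation reveals,'' but the underlying computation is the same.
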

\proof
    Equations (14.8a)-(14.8b) in \cite{Kotz:vol1} show that if $Y \sim \textup{Log-Normal}(\mu,\sigma^2)$, then $\E(Y) = e^{\mu +\sigma^2/2}$ and $\var(Y) = e^{2\mu +\sigma^2}\big(e^{\sigma^2}-1\big)$. Consequently, the stated choice of $\xi_1^2$ satisfies $\E(\xi_1^2)=p$ and $\var(\xi_1^2)/p=\tau$.  Equation (14.8c) in \cite{Kotz:vol1} shows that the central 6th moment of $Y$ is
    \begin{align*}
        \E|Y-\E(Y)|^6 &= e^{6\mu+ 3\sigma^2} \sum_{j=0}^6 (-1)^j \tbinom{6}{j}e^{\sigma^2(6-j)(5-j)/2},
    \end{align*}
    and a direct calculation reveals that $\E|\frac{\xi_1^2-p}{\sqrt p}|^6$ is a polynomial function of $\tau$ whose coefficients are bounded as $p\to\infty$. This ensures that $\xi_1$ satisfies the $(4+\varepsilon)$-moment condition in~\eqref{eqn:moment assumption}.

\section*{Acknowledgements}
The authors thank the reviewers and Associate Editor for their helpful feedback, which significantly improved the paper.

\bibliography{citation}

\begin{thebibliography}{52}
\providecommand{\natexlab}[1]{#1}
\providecommand{\url}[1]{\texttt{#1}}
\expandafter\ifx\csname urlstyle\endcsname\relax
  \providecommand{\doi}[1]{doi: #1}\else
  \providecommand{\doi}{doi: \begingroup \urlstyle{rm}\Url}\fi

\bibitem[Akemann et~al.(2011)Akemann, Baik, and Di~Francesco]{oxford:handbook}
Akemann, G., Baik, J., and Di~Francesco, P.
\newblock \emph{The Oxford Handbook of Random Matrix Theory}.
\newblock Oxford, 2011.

\bibitem[Anderson(2003)]{anderson1958introduction}
Anderson, T.~W.
\newblock \emph{Introduction to Multivariate Statistical Analysis}.
\newblock Wiley, 2003.

\bibitem[Bai and Silverstein(2004)]{Bai:Silverstein:2004}
Bai, Z. and Silverstein, J.~W.
\newblock {CLT} for linear spectral statistics of large-dimensional sample
  covariance matrices.
\newblock \emph{Annals of Probability}, 32\penalty0 (1A):\penalty0 553--605,
  2004.

\bibitem[Bai and Saranadasa(1996)]{Bai:1996}
Bai, Z. and Saranadasa, H.
\newblock Effect of high dimension: by an example of a two sample problem.
\newblock \emph{Statistica Sinica}, pages 311--329, 1996.

\bibitem[Bai and Silverstein(2010)]{Bai:Silverstein:2010}
Bai, Z. and Silverstein, J.~W.
\newblock \emph{Spectral Analysis of Large Dimensional Random Matrices}.
\newblock Springer, 2010.

\bibitem[Bai and Zhou(2008)]{Bai:Zhou:2008}
Bai, Z. and Zhou, W.
\newblock Large sample covariance matrices without independence structures in
  columns.
\newblock \emph{Statistica Sinica}, pages 425--442, 2008.

\bibitem[Bai et~al.(2010)Bai, Chen, and Yao]{bai2010estimation}
Bai, Z., Chen, J., and Yao, J.
\newblock On estimation of the population spectral distribution from a
  high-dimensional sample covariance matrix.
\newblock \emph{Australian \& New Zealand Journal of Statistics}, 52\penalty0
  (4):\penalty0 423--437, 2010.

\bibitem[Boucheron et~al.(2013)Boucheron, Lugosi, and
  Massart]{boucheron2013concentration}
Boucheron, S., Lugosi, G., and Massart, P.
\newblock \emph{Concentration Inequalities: A Nonasymptotic Theory of
  Independence}.
\newblock Oxford, 2013.

\bibitem[Couillet and Debbah(2011)]{Debbah:Couillet2011}
Couillet, R. and Debbah, M.
\newblock \emph{Random Matrix Methods for Wireless Communications}.
\newblock Cambridge, 2011.

\bibitem[Edelman and Wang(2013)]{edelman2013random}
Edelman, A. and Wang, Y.
\newblock Random matrix theory and its innovative applications.
\newblock In \emph{Advances in Applied Mathematics, Modeling, and Computational
  Science}, pages 91--116. Springer, 2013.

\bibitem[El~Karoui(2009)]{Karoui:2009}
El~Karoui, N.
\newblock Concentration of measure and spectra of random matrices: Applications
  to correlation matrices, elliptical distributions and beyond.
\newblock \emph{Annals of Applied Probability}, 19\penalty0 (6):\penalty0
  2362--2405, 2009.

\bibitem[El~Karoui(2008)]{el2008spectrum}
El~Karoui, N.
\newblock Spectrum estimation for large dimensional covariance matrices using
  random matrix theory.
\newblock \emph{Annals of Statistics}, 36\penalty0 (6):\penalty0 2757--2790,
  2008.

\bibitem[El~Karoui and Purdom(2019)]{karoui2019}
El~Karoui, N. and Purdom, E.
\newblock The non-parametric bootstrap and spectral analysis in moderate and
  high-dimension.
\newblock In \emph{The 22nd International Conference on Artificial Intelligence
  and Statistics}, pages 2115--2124. PMLR, 2019.

\bibitem[Fang et~al.(1990)Fang, Kotz, and Ng]{fang2018symmetric}
Fang, K.-T., Kotz, S., and Ng, K.~W.
\newblock \emph{Symmetric Multivariate and Related Distributions}.
\newblock Chapman and Hall, 1990.

\bibitem[Fisher et~al.(2010)Fisher, Sun, and Gallagher]{fisher2010}
Fisher, T.~J., Sun, X., and Gallagher, C.~M.
\newblock A new test for sphericity of the covariance matrix for high
  dimensional data.
\newblock \emph{Journal of Multivariate Analysis}, 101\penalty0 (10):\penalty0
  2554--2570, 2010.

\bibitem[Gupta et~al.(2013)Gupta, Varga, and Bodnar]{gupta2013elliptically}
Gupta, A.~K., Varga, T., and Bodnar, T.
\newblock \emph{Elliptically Contoured Models in Statistics and Portfolio
  Theory}.
\newblock Springer, 2013.

\bibitem[H{\"a}rdle and Simar(2019)]{hardle2019applied}
H{\"a}rdle, W.~K. and Simar, L.
\newblock \emph{Applied Multivariate Statistical Analysis}.
\newblock Springer, 2019.

\bibitem[Hu et~al.(2019{\natexlab{a}})Hu, Li, Liu, and Zhou]{hu2019aos}
Hu, J., Li, W., Liu, Z., and Zhou, W.
\newblock High-dimensional covariance matrices in elliptical distributions with
  application to spherical test.
\newblock \emph{Annals of Statistics}, 47\penalty0 (1):\penalty0 527--555,
  2019{\natexlab{a}}.

\bibitem[Hu et~al.(2019{\natexlab{b}})Hu, Li, and Zhou]{hu2019high}
Hu, J., Li, W., and Zhou, W.
\newblock Central limit theorem for mutual information of large {MIMO} systems
  with elliptically correlated channels.
\newblock \emph{IEEE Transactions on Information Theory}, 65\penalty0
  (11):\penalty0 7168--7180, 2019{\natexlab{b}}.

\bibitem[Johnson et~al.(1994)Johnson, Kotz, and Balakrishnan]{Kotz:vol1}
Johnson, N.~L., Kotz, S., and Balakrishnan, N.
\newblock \emph{Continuous Univariate Distributions, Volume 1, (2nd ed.)}.
\newblock Wiley, 1994.

\bibitem[Johnson et~al.(1995)Johnson, Kotz, and Balakrishnan]{Kotz:vol2}
Johnson, N.~L., Kotz, S., and Balakrishnan, N.
\newblock \emph{Continuous Univariate Distributions, Volume 2, (2nd ed.)}.
\newblock Wiley, 1995.

\bibitem[Johnson et~al.(1985)Johnson, Schechtman, and Zinn]{Rosenthal}
Johnson, W.~B., Schechtman, G., and Zinn, J.
\newblock Best constants in moment inequalities for linear combinations of
  independent and exchangeable random variables.
\newblock \emph{Annals of Probability}, pages 234--253, 1985.

\bibitem[Jonsson(1982)]{jonsson1982some}
Jonsson, D.
\newblock Some limit theorems for the eigenvalues of a sample covariance
  matrix.
\newblock \emph{Journal of Multivariate Analysis}, 12\penalty0 (1):\penalty0
  1--38, 1982.

\bibitem[Jun et~al.(2022)Jun, Jiahui, Long, and Wang]{jun2022tracy}
Jun, W., Jiahui, X., Long, Y., and Wang, Z.
\newblock Tracy-{W}idom limit for the largest eigenvalue of high-dimensional
  covariance matrices in elliptical distributions.
\newblock \emph{Bernoulli}, 28\penalty0 (4):\penalty0 2941--2967, 2022.

\bibitem[Kong and Valiant(2017)]{kong2017spectrum}
Kong, W. and Valiant, G.
\newblock Spectrum estimation from samples.
\newblock \emph{Annals of Statistics}, 45\penalty0 (5):\penalty0 2218--2247,
  2017.

\bibitem[Ledoit and Wolf(2015)]{ledoit2015spectrum}
Ledoit, O. and Wolf, M.
\newblock Spectrum estimation: A unified framework for covariance matrix
  estimation and {PCA} in large dimensions.
\newblock \emph{Journal of Multivariate Analysis}, 139:\penalty0 360--384,
  2015.

\bibitem[Ledoit and Wolf(2017)]{Ledoit:numerical}
Ledoit, O. and Wolf, M.
\newblock Numerical implementation of the {Q}u{EST} function.
\newblock \emph{Computational Statistics \& Data Analysis}, 115:\penalty0
  199--223, 2017.

\bibitem[Li and Yao(2018)]{li2018structure}
Li, W. and Yao, J.
\newblock On structure testing for component covariance matrices of a high
  dimensional mixture.
\newblock \emph{Journal of the Royal Statistical Society: Series B (Statistical
  Methodology)}, 80\penalty0 (2):\penalty0 293--318, 2018.

\bibitem[Li et~al.(2022)Li, Wang, Yao, and Zhou]{li2022eigenvalues}
Li, W., Wang, Q., Yao, J., and Zhou, W.
\newblock On eigenvalues of a high-dimensional spatial-sign covariance matrix.
\newblock \emph{Bernoulli}, 28\penalty0 (1):\penalty0 606--637, 2022.

\bibitem[Lopes(2016)]{Lopes:2016}
Lopes, M.~E.
\newblock Unknown sparsity in compressed sensing: Denoising and inference.
\newblock \emph{IEEE Transactions on Information Theory}, 62\penalty0
  (9):\penalty0 5145--5166, 2016.

\bibitem[Lopes(2022{\natexlab{a}})]{lopes2022}
Lopes, M.~E.
\newblock {Central limit theorem and bootstrap approximation in high
  dimensions: Near $1/\sqrt{n}$ rates via implicit smoothing}.
\newblock \emph{The Annals of Statistics}, 50\penalty0 (5):\penalty0
  2492--2513, 2022{\natexlab{a}}.

\bibitem[Lopes(2022{\natexlab{b}})]{lopescoordinate}
Lopes, M.~E.
\newblock Improved rates of bootstrap approximation for the operator norm: A
  coordinate-free approach.
\newblock \emph{arXiv:2208.03050}, 2022{\natexlab{b}}.

\bibitem[Lopes and Yao(2022)]{Lopes:EJS}
Lopes, M.~E. and Yao, J.
\newblock A sharp lower-tail bound for {G}aussian maxima with application to
  bootstrap methods in high dimensions.
\newblock \emph{Electronic Journal of Statistics}, 16\penalty0 (1):\penalty0
  58--83, 2022.

\bibitem[Lopes et~al.(2019)Lopes, Blandino, and Aue]{lopes2019bootstrapping}
Lopes, M.~E., Blandino, A., and Aue, A.
\newblock Bootstrapping spectral statistics in high dimensions.
\newblock \emph{Biometrika}, 106\penalty0 (4):\penalty0 781--801, 2019.

\bibitem[Mammen(1993)]{Mammen}
Mammen, E.
\newblock Bootstrap and wild bootstrap for high dimensional linear models.
\newblock \emph{Annals of Statistics}, 21\penalty0 (1):\penalty0 255--285,
  1993.

\bibitem[Marcus et~al.(2022)Marcus, Spielman, and Srivastava]{Spielman}
Marcus, A.~W., Spielman, D.~A., and Srivastava, N.
\newblock Interlacing families {III}: Sharper restricted invertibility
  estimates.
\newblock \emph{Israel Journal of Mathematics}, 247\penalty0 (2):\penalty0
  519--546, 2022.

\bibitem[McNeil et~al.(2011)McNeil, Frey, and Embrechts]{Embrechts:2011}
McNeil, A., Frey, R., and Embrechts, P.
\newblock \emph{Quantitative Risk Management.}
\newblock Princeton, 2011.

\bibitem[Mestre(2008)]{mestre2008improved}
Mestre, X.
\newblock Improved estimation of eigenvalues and eigenvectors of covariance
  matrices using their sample estimates.
\newblock \emph{IEEE Transactions on Information Theory}, 54\penalty0
  (11):\penalty0 5113--5129, 2008.

\bibitem[Paindaveine and Verdebout(2016)]{Paindeveine}
Paindaveine, D. and Verdebout, T.
\newblock {On high-dimensional sign tests}.
\newblock \emph{Bernoulli}, 22\penalty0 (3):\penalty0 1745 -- 1769, 2016.

\bibitem[Patterson et~al.(2006)Patterson, Price, and Reich]{Patterson:2006}
Patterson, N., Price, A.~L., and Reich, D.
\newblock Population structure and eigenanalysis.
\newblock \emph{PLoS genetics}, 2\penalty0 (12):\penalty0 e190, 2006.

\bibitem[Potters and Bouchaud(2020)]{potters2020first}
Potters, M. and Bouchaud, J.-P.
\newblock \emph{A First Course in Random Matrix Theory: For Physicists,
  Engineers and Data Scientists}.
\newblock Cambridge, 2020.

\bibitem[Siegrist(2017)]{siegrist2017probability}
Siegrist, K.
\newblock \emph{Probability, Mathematical Statistics, and Stochastic
  Processes}.
\newblock LibreTexts, 2017.

\bibitem[Srivastava(2005)]{Srivastava2005some}
Srivastava, M.~S.
\newblock Some tests concerning the covariance matrix in high dimensional data.
\newblock \emph{Journal of the Japan Statistical Society}, 35\penalty0
  (2):\penalty0 251--272, 2005.

\bibitem[Tang and Nehorai(2012)]{Tang:2012}
Tang, G. and Nehorai, A.
\newblock The stability of low-rank matrix reconstruction: a constrained
  singular value view.
\newblock \emph{IEEE Transactions on Information Theory}, 58\penalty0
  (9):\penalty0 6079--6092, 2012.

\bibitem[Tian et~al.(2015)Tian, Lu, and Li]{tian2015robust}
Tian, X., Lu, Y., and Li, W.
\newblock A robust test for sphericity of high-dimensional covariance matrices.
\newblock \emph{Journal of Multivariate Analysis}, 141:\penalty0 217--227,
  2015.

\bibitem[Vershynin(2018)]{vershynin2018high}
Vershynin, R.
\newblock \emph{High-Dimensional Probability: An Introduction with Applications
  in Data Science}.
\newblock Cambridge, 2018.

\bibitem[Yang et~al.(2021)Yang, Zheng, and Chen]{yang2021testing}
Yang, X., Zheng, X., and Chen, J.
\newblock Testing high-dimensional covariance matrices under the elliptical
  distribution and beyond.
\newblock \emph{Journal of Econometrics}, 221\penalty0 (2):\penalty0 409--423,
  2021.

\bibitem[Yao et~al.(2015)Yao, Zheng, and Bai]{yao2015sample}
Yao, J., Zheng, S., and Bai, Z.
\newblock \emph{Sample Covariance Matrices and High-Dimensional Data Analysis}.
\newblock Cambridge, 2015.

\bibitem[Yao and Lopes(2023)]{lopesyao}
Yao, J. and Lopes, M.~E.
\newblock Rates of bootstrap approximation for eigenvalues in high-dimensional
  {PCA}.
\newblock \emph{to appear: Statistica Sinica (arXiv:2104.07328)}, 2023.

\bibitem[Yin et~al.(1988)Yin, Bai, and Krishnaiah]{Yin}
Yin, Y.~Q., Bai, Z., and Krishnaiah, P.~R.
\newblock On the limit of the largest eigenvalue of the large dimensional
  sample covariance matrix.
\newblock \emph{Probability {T}heory and {R}elated {F}ields}, 78\penalty0
  (4):\penalty0 509--521, 1988.

\bibitem[Zhang et~al.(2022)Zhang, Hu, and Li]{zhang2022clt}
Zhang, Y., Hu, J., and Li, W.
\newblock {CLT} for linear spectral statistics of high-dimensional sample
  covariance matrices in elliptical distributions.
\newblock \emph{Journal of Multivariate Analysis}, page 105007, 2022.

\bibitem[Zhao and Cand\`{e}s(2022)]{Candes}
Zhao, Q. and Cand\`{e}s, E.~J.
\newblock An adaptively resized parametric bootstrap for inference in
  high-dimensional generalized linear models.
\newblock \emph{arXiv:2208.08944}, 2022.

\end{thebibliography}
\bibliographystyle{plainnat_reversed}

\end{document}